\documentclass[twoside, 11pt]{article}
\usepackage{amsfonts}

\usepackage{geometry}
\usepackage{bm}
\usepackage{cite}
\usepackage{latexsym}
\usepackage{enumerate,verbatim}
\usepackage{amsfonts}
\usepackage[english]{babel}
\usepackage[ansinew]{inputenc}
\usepackage{amsmath,amsthm,amsfonts,amssymb}
\usepackage{graphics}
\usepackage{mathrsfs}

\usepackage{graphicx}
\usepackage[font=small,labelfont=bf]{caption}

\usepackage{color}

\usepackage{algorithm}
\usepackage{algorithmic}

\usepackage{appendix}

\usepackage{tikz}
\usepackage{pgfplots}

\usepackage{tablefootnote}

\usepackage{subcaption}

\usepackage{multirow}
\usepackage{booktabs}
\usepackage{caption}
\usepackage{siunitx}
\usepackage{cases}
\usepackage{amsmath}

\usepackage{color}
\usepackage{bm}
\usepackage{url}
\usepackage{hyperref}
\hypersetup{
  colorlinks=true,
  linkcolor=blue,
  filecolor=blue,
  anchorcolor=blue,
  urlcolor=blue,
  citecolor=purple
}

 \newtheorem{theorem}{Theorem}[section]
 
 \newtheorem{lemma}[theorem]{Lemma}
 \newtheorem{prop}[theorem]{Proposition}
 \theoremstyle{definition}
 \newtheorem{definition}[theorem]{Definition}
 \theoremstyle{remark}
 \newtheorem{remark}[theorem]{Remark}
 \theoremstyle{eg}

 \theoremstyle{fact}
 
\numberwithin{equation}{section}

\def\be{\begin{equation}}
\def\ee{\end{equation}}
\def\bea{\begin{eqnarray}}
\def\eea{\end{eqnarray}}

\newcommand{\N}{\mathbb N}

\DeclareMathOperator*{\argmin}{argmin}

\DeclareMathOperator{\C}{\mathcal{C}}

\begin{document}
\title{{\bf
\Large
Dynamic Proximal Gradient Algorithms for\\ Schatten-$p$ Quasi-Norm Regularized Problems
}
}
\author{Weiping Shen\thanks{Weiping Shen and Linglingzhi Zhu contributed equally to this work and are listed alphabetically.}\ \thanks{School of Mathematical Sciences, Zhejiang Normal University, Jinhua 321004, P. R. China (shenweiping@zjnu.cn). This author's work was supported in part by the National Natural Science Foundation of China (grant 12071441).}
 \and Linglingzhi Zhu\footnotemark[1]\ \thanks{H. Milton Stewart School of Industrial and Systems Engineering, Georgia Institute of Technology, Atlanta, GA 30332, USA (llzzhu@gatech.edu).}
 \and Yaohua Hu\thanks{School of Mathematical Sciences, Shenzhen University, Shenzhen 518060, P. R. China (mayhhu@szu.edu.cn). This author's work was supported in part by the National Key Research and Development Program of China (2025YFA1016903), Shenzhen Medical Research Fund (B2502001), National Natural Science Foundation of China (12571327, 12426311), Project of Educational Commission of Guangdong Province (2025ZDZX2059, 2023ZDZX1017), and Shenzhen Science and Technology Program (RCJC20221008092753082, JCYJ20241202124209011).}
\and Chong Li\thanks{Corresponding author. School of Mathematical Sciences, Zhejiang University, Hangzhou 310058, P. R. China (cli@zju.edu.cn).
This author's work was supported in part by the National Natural Science Foundation of China (grant 11971429)  and   Zhejiang
Provincial Natural Science Foundation of China (grant LY18A010004).}
\and Xiaoqi Yang\thanks{Department of Applied Mathematics, The Hong Kong Polytechnic University, Kowloon, Hong Kong (mayangxq@polyu.edu.hk). This author's work was supported in part by the Research Grants Council of Hong Kong (PolyU 15205223 and $\mbox{N}\underbar{\hskip.15cm}$PolyU507/22).}
}

\date{}

\maketitle

\begin{abstract}

This paper investigates numerical solution methods for the Schatten-$p$ quasi-norm regularized problem with $p \in [0,1]$, which has been widely studied for finding low-rank solutions of linear inverse problems and gained successful applications in various mathematics and applied science fields. We propose a dynamic proximal gradient algorithm that, through the use of the Cayley transformation, avoids computationally expensive singular value decompositions at each iteration, thereby significantly reducing the computational complexity. The algorithm incorporates two step size selection strategies: an adaptive backtracking search and an explicit step size rule. We establish the sublinear convergence of the proposed algorithm for all $p \in [0,1]$ within the framework of the Kurdyka-{\L}ojasiewicz property. Notably, under mild assumptions, we show that the generated sequence converges to a stationary point of the objective function of the problem. For the special case when $p=1$, the linear convergence is further proved under the strict complementarity-type regularity condition commonly used in the linear convergence analysis of the forward-backward splitting algorithms. Preliminary numerical results validate the superior computational efficiency  of the proposed algorithm.

\end{abstract}

 \bigskip
  \par
 \textbf{Keywords:} Schatten-$p$ quasi-norm, proximal gradient algorithm, singular value decomposition, Kurdyka-{\L}ojasiewicz inequality
 \bigskip
 \par
 \textbf{Mathematics Subject Classification (2020): }65K05, 90C06, 90C26, 90C46

\section{Introduction}

Let $m, n, l $ be positive integers with $m\ge n$.  Let $\bm{b}\in\mathbb{R}^{l}$ and $\mathcal{A}:\mathbb{R}^{m\times n}\rightarrow\mathbb{R}^{l}$ be a linear map. Let $p\in[0,\;1]$ and the  Schatten-$p$ quasi-norm $\Vert \bm{X}\Vert_{S_p}$ for $\bm{X}\in\mathbb{R}^{m\times n}$ be defined by
$$\Vert \bm{X}\Vert_{S_p}=\left\{\begin{array}{ll}{\rm rank}(\bm{X}),& p=0;\\
\left(\sum\limits_{i=1}^{n} \sigma_i^p(\bm{X})\right)^{\frac{1}{p}},& p\in(0,1],\end{array}\right.$$where $\{\sigma_i(\bm{X})\}_{i=1}^n$ are the singular values of $\bm{X}\in\mathbb{R}^{m\times n}$.
The Schatten-$p$ quasi-norm regularized problem considered in the present paper  is the following optimization problem:
\begin{equation}\label{Problem-Lp}
\min\limits_{\bm{X}\in\mathbb{R}^{m\times n}}  \mathcal{F}(\bm{X}):=
\frac{1}{2}\Vert \mathcal{A}(\bm{X})-\bm{b}\Vert^{2}+\lambda \Vert \bm{X}\Vert_{S_p}^p,
\end{equation}
where $\Vert \bm{X}\Vert_{S_0}^0:=\Vert \bm{X}\Vert_{S_0}$.
This  regularized problem has been studied extensively  (cf. \cite{Fazel2002, CandesRecht2009,Candes2010,Recht2010,Mohan2012, Ma2011, peng2017s,Peng2018,Lu2014,Lu2017,zeng2023proximal}) and  demonstrated its efficient applications in various areas such as
  low-rank matrix completion \cite{CandesRecht2009}, robust principal component analysis \cite{CandesLiMaWright2011}, low-degree statistical models for random processes \cite{Rohde2011estimation}, the low-order realization of linear control systems \cite{fazel2001rank}, system identification \cite{liu2010interior}, image processing \cite{xie2016weighted}, various machine learning tasks \cite{nie2012low,fan2019factor,lefkimmiatis2023learning} and so on.

Our main interest in the present paper is focused on the issue of numerically solving the  Schatten-$p$ quasi-norm regularized problem
 \eqref{Problem-Lp}. To the best of our knowledge, most of the existing and important   algorithms for solving \eqref{Problem-Lp} are counterparts of the ones for solving  the vector $\ell_p$-regularized problem, including the fixed point  iterative algorithm (or forward-backward splitting algorithm, or proximal gradient algorithm) \cite{HaleYin2007,Hale2007,Li2015a,Li2015b}, the smoothing majorization method \cite{Chen2010, Chen2012} and the iterative reweighted  minimization algorithm \cite{Daubechies2010,LaiXuYin13,Lu2014MP}. More precisely, in the case when $p=1$, the fixed point  iterative algorithm was proposed by Ma et al.  in \cite{Ma2011} to solve the problem \eqref{Problem-Lp}  where they proved that any sequence generated by the algorithm converges to a solution of \eqref{Problem-Lp}. In the case when $p\in(0,1)$, the corresponding  fixed point  iterative algorithm  for solving   \eqref{Problem-Lp}  was proposed   in  \cite{peng2017s} for $p=\frac12$ and further extended  in \cite{Peng2018} to the general case $p\in(0,1)$;  while
 the smoothing majorization method and the iterative reweighted singular value minimization algorithm  were
proposed for solving   \eqref{Problem-Lp} in  \cite{Lu2014} and \cite{Lu2017}, respectively.
Unlike the case of the vector  $\ell_p$-regularized problem, in the case when $p\in(0,1)$, only the  following convergence property is proven for   the above mentioned   algorithms:
any
limit point of the generated sequence  is a stationary point $\bm{X}^*$ of the problem \eqref{Problem-Lp}, i.e., $0\in \partial  \mathcal{F}(\bm{X}^*)$  and no convergence rates are obtained.

 Recently, to overcome the non-Lipschitzian of $\Vert \bm{X}\Vert_{S_p}^p$ in the case when $p\in(0,1)$,  a proximal linearization method (PLM in short) via adding a rank constraint was proposed by Zeng \cite{zeng2023proximal} to solve problem \eqref{Problem-Lp}. The PLM involves at each iteration   solving the following constrained  optimization subproblem:
  \begin{equation*}\label{Problem-Lp-wang2}
\min\limits_{\scriptsize{\begin{array}{c}
               \bm{X}\in\mathbb{R}^{m\times n} \\
               {\rm rank}(\bm{X})\le r_k
             \end{array}} }
             \mathcal{H}(\bm{X}):=\frac{1}{2}\Vert \mathcal{A}(\bm{X})-\bm{b}\Vert^{2}+\lambda\sum_{j=1}^{r_k}w_{k,j}\sigma_j(\bm{X})
+\frac\tau2\|\bm{X}-\bm{X}_k\|_F^2
\end{equation*} (see    \cite[Algorithm 1]{zeng2023proximal}) where $r_k:={\rm rank}(\bm{X}_k)$ and $w_{k,j}:=p\sigma_j^{p-1}(\bm{X}_k)$ ($1\le j\le r_k$).
 It was claimed in \cite[Theorem 4.16]{zeng2023proximal} that a sequence $\{\bm{X}_k\}$ generated by the PLM converges to a stationary point of \eqref{Problem-Lp}
 under the assumption that there exist $\widetilde{r}$ and $\widetilde{k}$   such that   $\{\bm{X}_k\}$ satisfies   \begin{equation*}\label{Problem-Lp-wang3} {\rm rank}(\bm{X}_k)=\widetilde{r}, \;  \mbox{ and }\;
\mbox{$\sigma_1(\bm{X}_k)>\cdots>\sigma_{\widetilde{r}}(\bm{X}_k)$, \;  $\forall k\ge \widetilde{k}$}.
\end{equation*}

Note further that all algorithms mentioned above for solving \eqref{Problem-Lp} (including the cases $p=1$ and $p\in(0,1)$), including the PLM proposed in \cite{zeng2023proximal},
  require to compute  a singular value decomposition (SVD in short) for the generated  matrix $\bm{X}_k$ at each iteration. As is well known (see for e.g., \cite{shang2016scalable,zhang2018lrr,fan2019factor,Golub1990,giampouras2020novel,Halko2011, Tao2022,Onuki2017}),  the SVD suffers from a high computational cost and restricts their scalability to large-scale problems. Thus, how to design an efficient algorithm without  SVD  at each iteration is a challenging task. The main purpose of the present paper is to propose an algorithm avoiding the computation of SVD. For this purpose, we note that if $\hat{ \bm{X}} $ with SVD, i.e., $\hat{\bm{X}}=\hat{\bm{U}}^\top \bm{D}(\hat{\bm{\sigma}})\hat{\bm{V}}$ is an approximating solution to problem \eqref{Problem-Lp}, then  \eqref{Problem-Lp} is equivalent to the following minimization problem:
\begin{equation}\label{Problem-eq}\min\limits_{(\bm{\sigma},\bm{E},\bm{F})\in\mathbb{R}^{n}\times\mathcal{S}(m)\times\mathcal{S}(n) } \mathcal{F}_{\hat{\bm{\Omega}}}(\bm{\sigma},\bm{E},\bm{F}):=\frac{1}{2}\Vert \mathcal{A}((\hat{\bm{U}}e^{\bm{E}})^\top \bm{D}(\bm{\sigma}) \hat{\bm{V}}e^{\bm{F}})-\bm{b}\Vert^{2}+\lambda\Vert \bm{\sigma} \Vert_{p}^p \end{equation} (in the sense that their optimal values are equal), where
 $\mathcal{S}(n)$ is the space of all $n\times n$ real skew-symmetric matrices, $\bm{D}(\bm{\sigma}) \in \mathbb{R}^{m\times n}$ represents a diagonal matrix with elements taken from the vector $\bm{\sigma}$,
 $\Vert \bm{\sigma} \Vert_{p}$ is the $\ell_p$ norm of $ \bm{\sigma}$ and $\Vert \bm{\sigma} \Vert_{0}^0:=\Vert \bm{\sigma} \Vert_{0}$ denotes
the number of non-zero elements of $\bm{\sigma}$.
 Based on this equivalence and making use of the Cayley transform (cf. \cite{Cayley1846}), we design, for all $p\in[0,1]$, a completely new algorithm which simultaneously generates $ \bm{X}_k $ and its SVD (so the algorithm is SVD-free), and then analyze the convergence property.
The main  contributions of the present paper are as follows:

 (i). A dynamic proximal gradient algorithm is proposed with two kinds of  strategies of selecting   stepsizes:
 one is chosen by the ``backtracking search", and the other is given in an explicit form.

 (ii). The Kurdyka-{\L}ojasiewicz (K{\L} in short) property for  the dynamic objective  function $\mathcal{F}_{\hat{\bm{\Omega}}}$ involved in  the   problem \eqref{Problem-eq} is established, and then
 sublinear convergence of the proposed algorithm is established for all $p\in[0,1]$ (see Theorem \ref{thm2}).
 Under mild assumptions, we further show that any sequence generated by the  proposed algorithm converges to a stationary point of the function $\mathcal{F}$ in the original problem \eqref{Problem-Lp} (see Theorem \ref{remark}).

 (iii). In the special case when $p=1$,      a linear convergence rate (see Theorem \ref{thm2p=1} in Section 4) for the proposed algorithm is established  under the standard assumption which is commonly used in  the linear convergence analysis of forward-backward splitting algorithms for solving  \eqref{Problem-Lp}; see \cite{liang2014local,zhou2017unified,drusvyatskiy2018error} for details.

(iv). Preliminary numerical experiments demonstrate the effectiveness of the proposed algorithm for solving \eqref{Problem-Lp}. Compared with the  fixed point iterative methods~\cite{Ma2011,peng2017s,Peng2018}, our algorithm attains comparable solution accuracy while requiring only about one-tenth (or less) of the CPU time, and consistently exhibits a much higher convergence success rate and improved numerical stability.

The paper is organized as follows. In Section \ref{sec:alg}, we present the necessary notation  and propose the dynamic  proximal gradient algorithm, including the algorithm framework and the strategies of selecting the stepsizes. Sections \ref{sec:descent} and \ref{sec:convergence} present the descent property and the convergence analysis of the proposed algorithm, including the sublinear convergence property for general $p\in [0,1]$ and the linear convergence property for $p=1$ under mild assumptions, respectively. Preliminary numerical results are provided in Section \ref{sec:numerical}. Some useful lemmas and their
technical proofs are deferred to the Appendix.

\section{Dynamic Proximal Gradient Algorithm}\label{sec:alg}

\subsection{Notation}
The notation used here is standard; see for example \cite{peng2017s,rockafellar2009variational}. Let $\N$ be the set of all
nonnegative integers. Let $\mathbb{R}^n$ denote the $n$-dimensional Euclidean space endowed with the Euclidean norm $\|\cdot\|$. Let $\mathbb{R}^{m\times n}$ be the space of all real  $m\times n$  matrices endowed with the inner product $\langle\cdot,\cdot\rangle$ defined by $$\langle \bm{X},\bm{Y}\rangle:={\rm tr}(\bm{X}^\top \bm{Y})\quad \text{for any} \quad\bm{X},\;\bm{Y}\in\mathbb{R}^{m\times n},$$
and so the corresponding induced norm on  $\mathbb{R}^{m\times n}$  is the Frobenius norm denoted by $\Vert\cdot\Vert_F$.  The spaces of all real skew-symmetric matrices  and  orthogonal matrices in $\mathbb{R}^{n\times n}$
are denoted by $\mathcal{S}(n)$ and $\mathcal{O}(n)$, respectively.
 $\bm{I}$ denotes the  identity matrix.
 $\mathcal{D}(\bm{x})\in\mathbb{R}^{m\times n}$ denotes the diagonal matrix with $\bm{x}\in\mathbb{R}^{n}$ being its diagonal elements, and ${\rm diag}({\bm A})\in\mathbb{R}^{n}$ represents the vector formed by extracting the  diagonal elements from the matrix  $\bm{ A}$. For a linear operator $T$  between two Euclidean spaces, we denote by $T^*$ and $\Vert T\Vert_{{\rm op}}$ the adjoint of $T$ and its operator norm, respectively.
For $\bm{X}\in\mathbb{R}^{m\times n}$,  the singular values of $\bm{X}$ are denoted by  $\{\sigma_i(\bm{X})\}_{i=1}^n$  being ordered with $\sigma_1(\bm{X})\ge\sigma_2(\bm{X})\ge\cdots\ge\sigma_n(\bm{X})\ge0$. Moreover,  we write
$\bm{\sigma}(\bm{X}):=(\sigma_1(\bm{X}),\sigma_2(\bm{X}),\ldots,\sigma_n(\bm{X}))^\top$
and define
\begin{equation*}\label{Xi}
\Xi(\bm{X}):=\Big\{(\bm{U},\bm{V})\in \mathcal{O}(m)\times\mathcal{O}(n)\ |\ \bm{X}=\bm{U}^\top \mathcal{D}(\bm{\sigma}(\bm{X}))\bm{V}\Big\}.
\end{equation*}
For simplicity,  we use $\mathcal{M}$ to denote the  Cartesian product space $\mathbb{R}^{n}\times\mathcal{S}(m)\times \mathcal{S}(n)$   equipped with  the inner product
  $$\langle(\cdot,\cdot,\cdot),(\cdot,\cdot,\cdot)\rangle:=\langle \cdot,\cdot\rangle+\langle \cdot,\cdot\rangle+\langle \cdot,\cdot\rangle$$ and the induced product norm  $\Vert(\cdot,\cdot,\cdot)\Vert$.

 For convenience, we define the function $f:\mathbb{R}^{m\times n}\rightarrow\mathbb{R}$  by
\begin{equation}\label{fXdef}
f({\bm X}):=\frac{1}{2}\Vert \mathcal{A}(\bm{X})-\bm{b}\Vert^{2}\quad{\rm for \;each}\;\bm{X}\in\mathbb{R}^{m\times n}.
\end{equation}
Obviously, the gradient of $f$, denoted by $\nabla f$,  is given by
\begin{equation}\label{fXnabla}\nabla f({\bm X})=\mathcal{A}^*(\mathcal{A}\left(\bm{X}\right)-\bm{b})\quad\mbox{for each}\; {\bm X}\in\mathbb{R}^{m\times n}.\end{equation}
Associated to the function $f$ defined in \eqref{fXdef}, we define, for each $\bm{\Omega}:=(\bm{U},\bm{V})\in\mathcal{O}(m)\times\mathcal{O}(n)$,  the auxiliary function
$f_{\bm{\Omega}}:\mathcal{M}\rightarrow \mathbb{R}$ by
\begin{equation}\label{fomega}
f_{\bm{\Omega}}(\bm{\sigma},\bm{E},\bm{F}):=f((\bm{U}e^{\bm{E}})^\top \mathcal{D}(\bm{\sigma}) \bm{V}e^{\bm{F}})\quad{\rm for \;each}\;(\bm{\sigma},\bm{E},\bm{F})\in\mathcal{M}.
\end{equation}
Clearly,  $f_{\bm{\Omega}}$ is analytic on $\mathcal{M}$, and its gradient at $(\bm{\sigma},\bm{0},\bm{0})$, $\nabla f_{\bm{\Omega}}(\bm{\sigma},\bm{0},\bm{0}):=(\nabla_{\bm{\sigma}}f_{\bm{\Omega}}(\bm{\sigma},\bm{0},\bm{0}), \nabla_{\bm{E}}f_{\bm{\Omega}}(\bm{\sigma},\bm{0},\bm{0}),$ $\nabla_{\bm{F}}f_{\bm{\Omega}}(\bm{\sigma},\bm{0},\bm{0}))$ with the partial derivatives with respect to $\bm{\sigma}$, $\bm{E}$  and $\bm{F}$  being given respectively as
\begin{equation}\label{fparsigma}
\begin{cases}
    \nabla_{\bm{\sigma}} f_{\bm{\Omega}}(\bm{\sigma}, \bm{0}, \bm{0}) = \text{diag}\left( \bm{U} \nabla f(\bm{X}) \bm{V}^\top \right), \\
    \nabla_{\bm{E}} f_{\bm{\Omega}}(\bm{\sigma}, \bm{0}, \bm{0}) = \frac{1}{2} \left[ \bm{X} \nabla f(\bm{X})^\top - \nabla f(\bm{X}) \bm{X}^\top \right], \\
    \nabla_{\bm{F}} f_{\bm{\Omega}}(\bm{\sigma}, \bm{0}, \bm{0}) = \frac{1}{2} \left[ \bm{X}^\top \nabla f(\bm{X}) - \nabla f(\bm{X})^\top \bm{X} \right],
\end{cases}
\end{equation}
where $\bm{X}:=\bm{U}^\top\mathcal{D}(\bm{\sigma})\bm{V}$ (as shown  in Proposition \ref{prop1}).

We end this subsection with the symbols  ${\bf{B}}(\bm{x},r)$ and ${\rm d}(\bm{x},\mathcal{Z})$, in a metric space $\mathcal{X}$,   to denote the open ball centered at $\bm{x}\in\mathcal{X}$ with radius $r>0$, and  the distance function from $\bm{x}\in\mathcal{X}$ to the set $\mathcal{Z}\subseteq\mathcal{X}$ respectively.

\subsection{Algorithm Framework}
As described in the introduction, the  original problem \eqref{Problem-Lp} is equivalent  to
the  minimization problem \eqref{Problem-eq} on $\mathcal{M}$, i.e.,
\begin{equation*}\min\limits_{(\bm{\sigma},\bm{E},\bm{F})\in\mathbb{R}^{n}\times\mathcal{S}(m)\times\mathcal{S}(n) } \mathcal{F}_{\hat{\bm{\Omega}}}(\bm{\sigma},\bm{E},\bm{F}):=f_{\hat{\bm{\Omega}}}(\bm{\sigma},\bm{E},\bm{F})+\lambda\Vert \bm{\sigma} \Vert_{p}^p,\end{equation*}
assuming that $\hat{\bm{X}}$ is an approximate solution of \eqref{Problem-Lp} with SVD: $\hat{\bm{X}}=\hat{\bm{U}}^\top \mathcal{D}(\hat{\bm{\sigma}})\hat{\bm{V}}$ and $\hat{\bm{\Omega}}=(\hat{\bm{U}},\hat{\bm{V}})$.
 Note that the  objective function  $\mathcal{F}_{\hat{\bm{\Omega}}}$ possesses the special structure: $\mathcal{F}_{\hat{\bm{\Omega}}}(\hat{\bm{\sigma}},\cdot,\cdot)$  is analytic on $\mathcal{S}(m)\times \mathcal{S}(n)$, while $\mathcal{F}_{\hat{\bm{\Omega}}}(\cdot,\bm{0},\bm{0})$ is the sum of a quadratic function and the $p$-norm of vectors. Then, to get a better approximation of the solution, we apply
 the (one step) gradient algorithm and the
 proximal gradient algorithm  to $\mathcal{F}_{\hat{\bm{\Omega}}}(\hat{\bm{\sigma}},\cdot,\cdot)$ and $\mathcal{F}_{\hat{\bm{\Omega}}}(\cdot,\bm{0},\bm{0})$, respectively.
Thus, incorporating the Cayley transformation, we propose the following dynamic  proximal gradient algorithm (DPGA) for solving  the Problem \eqref{Problem-Lp}.

\begin{algorithm}[h]
	\caption{Dynamic Proximal Gradient Algorithm (DPGA)}
    \label{alg1}
	\begin{algorithmic}[1]
		\STATE {\bf Input:} $(\bm{\sigma}_{0},\bm{U}_{0},\bm{V}_{0})\in\mathbb{R}^n\times\mathcal{O}(m)\times\mathcal{O}(n)$,  $k:=0$. Set $\bm{X}_{0}:=\bm{U}_{0}^\top \mathcal{D}(\bm{\sigma}_{0})\bm{V}_{0}$. \\
    \WHILE {stopping criterion isn't met}		
    \STATE Set  $\bm{\Omega}_k:=(\bm{U}_k,\bm{V}_k)$ and compute  $\nabla f_{\bm{\Omega}_k}(\bm{\sigma}_k,\bm{0},\bm{0})$ according to \eqref{fparsigma}.
    	\STATE Choose stepsizes $t_k,\;s_k>0$, and set \begin{equation}\label{ExpE}
\bm{E}_{k}:=- s_k \nabla_{\bm{E}} f_{\bm{\Omega}_k}(\bm{\sigma}_k,\bm{0},\bm{0}),\quad\bm{F}_{k}:=-s_k\nabla_{\bm{F}} f_{\bm{\Omega}_k}(\bm{\sigma}_k,\bm{0},\bm{0}) .
    \end{equation}
		\STATE Generate $(\bm{\sigma}_{k+1},\; \bm{U}_{k+1},\; \bm{V}_{k+1})$ according to
    \begin{numcases}{}
    \bm{\sigma}_{k+1} = \argmin_{\bm{\sigma}}\left\{\frac{1}{2t_{k}} \Vert \bm{\sigma}-\bm{\sigma}_{k}+t_{k}\nabla_{\bm{\sigma}} f_{\bm{\Omega}_k}(\bm{\sigma}_k,\bm{0},\bm{0})\Vert^{2} +\lambda\Vert \bm{\sigma} \Vert_{p}^p \right\} \label{eq:sigma} \\
    \left(\bm{I}+\frac{1}{2}\bm{E}_{k}\right)\bm{U}_{k+1}^\top = \left(\bm{I}-\frac{1}{2}\bm{E}_{k}\right)\bm{U}_{k}^\top \label{eq:U} \\
    \left(\bm{I}+\frac{1}{2}\bm{F}_{k}\right)\bm{V}_{k+1}^\top = \left(\bm{I}-\frac{1}{2}\bm{F}_{k}\right)\bm{V}_{k}^\top \label{eq:V}
\end{numcases}
    \STATE  Compute $\bm{X}_{k+1}:=\bm{U}_{k+1}^\top \mathcal{D}(\bm{\sigma}_{k+1})\bm{V}_{k+1}$.
    \STATE Update $k:=k+1$.
    \ENDWHILE
	\end{algorithmic}
\end{algorithm}

\begin{remark}
The main computation of implementing Algorithm \ref{alg1} is solving
the  optimization subproblem  \eqref{eq:sigma}  and the linear systems \eqref{eq:U} and \eqref{eq:V}. Note that solving the optimization subproblem \eqref{eq:sigma} amounts to computing the proximity operator
of the function $\lambda t_{k}\Vert \cdot \Vert_{p}^p$ at the point $\bm{\sigma}_{k}-t_{k}\nabla_{\bm{\sigma}} f_{\bm{\Omega}_k}(\bm{\sigma}_k,\bm{0},\bm{0})$
which  has been well discussed (cf. \cite{Beck2009,Blumensath08,Bredies2008,Daubechies04,XuZB12,XuZB23,hu2017group}), and  particularly, in the case  when $p= 0, 1/2, 2/3, 1$, the analytical
solutions of the proximity operator  were  provided  in \cite{Daubechies04,Blumensath08,XuZB12, XuZB23}.
The linear systems \eqref{eq:U} and \eqref{eq:V} are widely known as the Cayley transformation, which has
been long used in matrix computations such as inverse eigenvalue problems and inverse singular values problems \cite{Friedland1987,Chu1992}.
For large-scale problems, as remarked in \cite{Bai2004},  solving \eqref{eq:U} and \eqref{eq:V} iteratively achieves computational efficiency with
only a few iterations  because, the coefficient matrices on the left hand side of  both linear systems approach  the identity
matrix in the limit due to the fact that  $\{\bm{E}_{k}\}$ and $\{\bm{F}_{k}\}$ converge to zero as explained in Remark \ref{rmk:cluster} below.
\end{remark}

\subsection{Strategies of Selecting Stepsizes} \label{stepsizes}
The strategy for selecting the stepsizes $\{t_k\}$ and $\{s_k\}$    plays an important role in the convergence analysis of the proposed algorithm. In our approach, we expect to
 select  $\{t_k\}$ and $\{s_k\}$ such that the  following ``sufficient descent property'' of the generated  sequence $\{\mathcal{F}(\bm{X}_{k})\}$ holds  for some $\alpha>0$ and all $k\in \N$:
\begin{equation}\label{H_1}
\mathcal{F}(\bm{X}_{k+1})+\alpha\Vert (\bm{\sigma}_{k+1}-\bm{\sigma}_{k},\bm{E}_{k},\bm{F}_{k})\Vert^{2}\leq \mathcal{F}(\bm{X}_{k}).
\end{equation}
We provide the following two  strategies at the $k$-th iteration:

\begin{itemize}
  \item [\rm (i)](Backtracking search strategy) Choose $\rho_t\in(0,1)$ and $\rho_s\in(0,\;(\max\{\| \nabla_{\bm{E}}^k\|,\;\| \nabla_{\bm{F}}^k\|\})^{-1})$.  Set
  $$(t_k,\;s_k):=\max\{(\rho_t^{i},\;\rho_s^{j})\;:\;i,\;j\in\N \;{\rm and}\; i\le j\le i+1 \}$$
  such that the sufficient descent property  \eqref{H_1} holds, in the sense that  $(a,\;b)\le(c,\;d)$ if and only if $a\le c$ and $b\le d$.
  \item [\rm (ii)](Explicit stepsize strategy) Choose $\mu\in(0,\frac12]$. The stepsizes $t_k$ and $s_k$ are directly chosen  to satisfy the following two assumptions:
\begin{itemize}
  \item [($A_1$)]$\frac{\mu}{l_{\sigma}^k}\leq t_k \leq \frac{1-\mu}{l_{\sigma}^k}$;
  \item [($A_2$)]$\min\left\{1,\;\mu\bar{s}_k,\;\frac{1}{\| \nabla_{\bm{E}}^k\|_F},\;\frac{1}{\|\nabla_{\bm{F}}^k\|_F}\right\}\le s_k\leq \min\left\{\frac{1}{\mu},\;(1-\mu)\bar{s}_k,\;\frac{1}{\| \nabla_{\bm{E}}^k\|_F},\;\frac{1}{\| \nabla_{\bm{F}}^k\|_F}\right\}$,
\end{itemize}
\end{itemize}
where\begin{equation}\label{xfbound}
\bar{s}_k:=\frac{2}{\sqrt{(l_\Omega^k)^2+2c_k}+l_\Omega^k}.
\end{equation}
\begin{equation}\label{nablaekfkdefi}\nabla_{\bm{E}}^k:=\nabla_{\bm{E}} f_{\bm{\Omega}_k}(\bm{\sigma}_k,\bm{0},\bm{0}),\quad \nabla_{\bm{F}}^k:=\nabla_{\bm{F}} f_{\bm{\Omega}_k}(\bm{\sigma}_k,\bm{0},\bm{0}),\end{equation}
\begin{equation}\label{Ak}
l_{\sigma}^k:=\|\mathcal{A}^*\mathcal{A}\|_{{\rm op}}+2\Vert\nabla f(\bm{X}_k) \Vert_{F}+2\|\mathcal{A}^*\mathcal{A}\|_{{\rm op}}\Vert\bm{\sigma}_k\Vert,
\end{equation}
\begin{equation}\label{Bk}
l_{\Omega}^k:=\|\mathcal{A}^*\mathcal{A}\|_{{\rm op}}\cdot\|\bm{\sigma}_k\|^2+(\frac12+\|\bm{\sigma}_k\|)\|\nabla f(\bm{X}_k)\|_F+\frac12\|\bm{\sigma}_k\|,
\end{equation}
\begin{equation}\label{Bk2}
c_k:=\left(\|\mathcal{A}^*\mathcal{A}\|_{{\rm op}}\cdot\|\bm{\sigma}_{k+1}\|^2+\|\mathcal{A}^*(\bm{b})\|_F\cdot\|\bm{\sigma}_{k+1}\|\right)(\| \nabla_{\bm{E}}^k\|_F+\|\nabla_{\bm{F}}^k\|_F),
\end{equation}

Compared with strategy (ii), strategy (i) can find the almost maximum stepsizes  possessing  the sufficient descent property, which can be implemented within  finite iterations as remarked before Proposition \ref{prop4}.

\section{Sufficient Descent Property of the Proposed Algorithm}\label{sec:descent}

We begin with the  following lemma which is  a trivial extension of \cite[3.2.12]{Ortega1970} and so, the proof is omitted.

\begin{lemma}\label{lemma1}
Let $\mathbb{V}$ be a finite dimensional normed linear space and $h:\mathbb{V}\rightarrow\mathbb{R}$ be continuously differentiable. Let $\bm{x},\bm{y}\in \mathbb{V}$ and suppose that there exists $L>0$ such that
\begin{equation*}\label{grad-lemma}
\langle \nabla h(\bm{x}+t\bm{y})-\nabla h(\bm{x}),\; \bm{y}  \rangle\leq Lt\Vert \bm{y}\Vert^2,\quad \forall t\in[0,1].
\end{equation*}
Then
  \begin{displaymath}
  h(\bm{x}+\bm{y})- h(\bm{x})-\left\langle \nabla h(\bm{x}),\; \bm{y} \right\rangle\leq\frac{L}{2}\Vert \bm{y}\Vert^{2}.
  \end{displaymath}
\end{lemma}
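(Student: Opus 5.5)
The plan is to reduce the statement to the fundamental theorem of calculus along the segment from $\bm{x}$ to $\bm{x}+\bm{y}$. Define the scalar function $\varphi:[0,1]\to\mathbb{R}$ by $\varphi(t):=h(\bm{x}+t\bm{y})$. Since $h$ is continuously differentiable on the finite dimensional space $\mathbb{V}$, the chain rule shows that $\varphi$ is continuously differentiable on $[0,1]$, with $\varphi'(t)=\langle \nabla h(\bm{x}+t\bm{y}),\bm{y}\rangle$. Here $\nabla h$ is understood via the inner product implicit in the pairing $\langle\cdot,\cdot\rangle$. In the applications this is the Euclidean/Frobenius structure on $\mathcal{M}$, and the norm in the hypothesis is the one induced by it.

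The key identity is
\begin{equation*}
h(\bm{x}+\bm{y})-h(\bm{x})-\langle\nabla h(\bm{x}),\bm{y}\rangle=\varphi(1)-\varphi(0)-\varphi'(0)=\int_0^1\big(\varphi'(t)-\varphi'(0)\big)\,dt .
\end{equation*}
The integrand equals $\langle \nabla h(\bm{x}+t\bm{y})-\nabla h(\bm{x}),\bm{y}\rangle$, and the hypothesis bounds it by $Lt\Vert\bm{y}\Vert^2$ for every $t\in[0,1]$. Integrating this bound over $[0,1]$ gives $\int_0^1 Lt\Vert\bm{y}\Vert^2\,dt=\frac{L}{2}\Vert\bm{y}\Vert^2$, which is exactly the claim.

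There is no genuine obstacle; the only points needing care are the following.
\begin{itemize}
\item The integral form of the fundamental theorem of calculus is legitimate because $\varphi'$ is continuous, and this continuity comes from the $C^1$ assumption on $h$.
\item Only a one-sided bound on the integrand is needed, so no absolute values appear. This is precisely why the statement is phrased with a one-sided inequality rather than a Lipschitz condition on $\nabla h$.
\item The hypothesis is only required along the single segment $[\bm{x},\bm{x}+\bm{y}]$, which is what the later descent estimates, with $h=f_{\bm{\Omega}_k}$, will exploit.
\end{itemize}
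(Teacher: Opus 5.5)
Your argument is correct and is essentially the proof the paper has in mind: the paper omits it as a trivial extension of Ortega--Rheinboldt 3.2.12, which is proved exactly by writing the remainder as $\int_0^1\langle \nabla h(\bm{x}+t\bm{y})-\nabla h(\bm{x}),\bm{y}\rangle\,dt$ and integrating the one-sided bound $Lt\Vert\bm{y}\Vert^2$. Your side remark that the norm must be the one induced by the inner product is not actually needed, since the hypothesis enters only as a scalar bound along the segment.
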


Recall that the auxiliary function $f_{\bm{\Omega}}$  is defined by \eqref{fomega}. The following proposition  provides the partial derivative formulas for  $f_{\bm{\Omega}}$.

\begin{prop}\label{prop1}
Let  $\bm{\Omega}:=(\bm{U},\bm{V})\in\mathcal{O}(m)\times\mathcal{O}(n)$ and $(\bm{\sigma},\bm{E},\bm{F})\in \mathcal{M}$. Set $\bm{X}:=\bm{U}^\top\mathcal{D}(\bm{\sigma})\bm{V}$ and $\bm{Y}:=(\bm{U}e^{\bm{E}})^\top\mathcal{D}(\bm{\sigma})\bm{V}e^{\bm F}$. Let $\bm{S}\in\mathcal{S}(m),\bm{T}\in\mathcal{S}(n)$ which commute with $\bm{E}$ and $\bm{F}$ respectively, i.e., $\bm{E}\bm{S}=\bm{S}\bm{E}$ and $\bm{F}\bm{T}=\bm{T}\bm{F}$. Then the following assertions hold:
\begin{itemize}
\item [\rm(i)]$\nabla_{\bm{\sigma}}f_{\bm{\Omega}}(\bm{\sigma},\bm{E},\bm{F})
={\rm diag}\left(\bm{U}e^{\bm{E}} \nabla f(\bm{Y})(\bm{V}e^{\bm{F}})^\top\right)={\rm diag}\left(\bm{U}e^{\bm{E}} \mathcal{A}^*(\mathcal{A}\left(\bm{Y}\right)-\bm{b})(\bm{V}e^{\bm{F}})^\top\right)$;
  \item [\rm(ii)]$\langle\nabla_{\bm{E}}f_{\bm{\Omega}}(\bm{\sigma},\bm{E},\bm{F}),\bm{S}\rangle=\langle \bm{Y} \nabla f(\bm{Y})^\top, \bm{S}\rangle=\langle \bm{Y} \left(\mathcal{A}^*(\mathcal{A} \left(\bm{Y}\right)-\bm{b})\right)^\top, \bm{S}\rangle;$
  \item [\rm(iii)]$\langle\nabla_{\bm{F}}f_{\bm{\Omega}}(\bm{\sigma},\bm{E},\bm{F}),\bm{T}\rangle=\langle\bm{Y}^\top\nabla f(\bm{Y}), \bm{T}\rangle=\langle\bm{Y}^\top\mathcal{A}^*(\mathcal{A}\left(\bm{Y}-\bm{b})\right), \bm{T}\rangle.$
\end{itemize}
In particular, one has that
\begin{itemize}
  \item[\rm(iv)] $\nabla_{\bm{E}}f_{\bm{\Omega}}(\bm{\sigma},\bm{0},\bm{0})=\frac{1}{2}[\bm{X}\nabla f(\bm{X})^\top-\nabla f(\bm{X}){\bm{X}}^\top]=\frac{1}{2}[\bm{X}(\mathcal{A}^*\left(\mathcal{A} (\bm{X})-\bm{b}\right))^\top-\mathcal{A}^*\left(\mathcal{A}(\bm{X})-\bm{b}\right)\bm{X}^\top ],$
  \item[\rm(v)] $\nabla_{\bm{F}}f_{\bm{\Omega}}(\bm{\sigma},\bm{0},\bm{0})=\frac{1}{2}[\bm{X}^\top\nabla f(\bm{X})-\nabla f(\bm{X})^\top\bm{X}]=\frac{1}{2}[\bm{X}^\top\mathcal{A}^*\left(\mathcal{A} (\bm{X})-\bm{b}\right)-(\mathcal{A}^*\left(\mathcal{A} (\bm{X})-\bm{b}\right))^\top\bm{X}]$.
\end{itemize}

\end{prop}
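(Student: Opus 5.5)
The plan is a chain-rule computation for the composite map $(\bm{\sigma},\bm{E},\bm{F})\mapsto \bm{Y}(\bm{\sigma},\bm{E},\bm{F})=(\bm{U}e^{\bm{E}})^\top\mathcal{D}(\bm{\sigma})\bm{V}e^{\bm{F}}$. Since $f$ is quadratic with $\nabla f(\bm{Y})=\mathcal{A}^*(\mathcal{A}(\bm{Y})-\bm{b})$ by \eqref{fXnabla}, each partial derivative of $f_{\bm{\Omega}}$ is obtained by pairing $\nabla f(\bm{Y})$ with the corresponding directional derivative of $\bm{Y}$. In every case the second equality in (i)--(v) is just the substitution of \eqref{fXnabla}.

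\textbf{Assertion (i).} The map $\bm{\sigma}\mapsto \bm{Y}$ is linear. For a direction $\bm{h}\in\mathbb{R}^n$ I compute
\[
\langle \nabla f(\bm{Y}), (\bm{U}e^{\bm{E}})^\top\mathcal{D}(\bm{h})\bm{V}e^{\bm{F}}\rangle=\langle \bm{U}e^{\bm{E}}\nabla f(\bm{Y})(\bm{V}e^{\bm{F}})^\top,\mathcal{D}(\bm{h})\rangle={\rm diag}\big(\bm{U}e^{\bm{E}}\nabla f(\bm{Y})(\bm{V}e^{\bm{F}})^\top\big)^\top\bm{h}.
\]
The last equality holds because $\mathcal{D}(\bm{h})$ is supported on the diagonal. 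This gives (i).

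\textbf{Assertions (ii) and (iii).} For (ii), I take the direction $\bm{S}$ with $\bm{E}\bm{S}=\bm{S}\bm{E}$. Commutation gives $e^{\bm{E}+t\bm{S}}=e^{\bm{E}}e^{t\bm{S}}$, so $\frac{d}{dt}e^{\bm{E}+t\bm{S}}|_{t=0}=e^{\bm{E}}\bm{S}=\bm{S}e^{\bm{E}}$. This is the one place the commutation hypothesis is needed: without it the derivative of the exponential is the more complicated integral formula. The directional derivative of $\bm{Y}$ is then $(\bm{U}e^{\bm{E}}\bm{S})^\top\mathcal{D}(\bm{\sigma})\bm{V}e^{\bm{F}}=\bm{S}^\top \bm{Y}=-\bm{S}\bm{Y}$, using $\bm{S}^\top=-\bm{S}$ and $(e^{\bm{E}}\bm{S})^\top=\bm{S}^\top(e^{\bm{E}})^\top$. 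Hence
\[
\langle\nabla_{\bm{E}}f_{\bm{\Omega}},\bm{S}\rangle=\langle\nabla f(\bm{Y}),\bm{S}^\top\bm{Y}\rangle={\rm tr}(\nabla f(\bm{Y})^\top\bm{S}^\top\bm{Y})=\langle \bm{S},\bm{Y}\nabla f(\bm{Y})^\top\rangle.
\]
The last step is cyclicity of the trace. Assertion (iii) follows by the same argument: the derivative of $\bm{Y}$ in the direction $\bm{T}$ is $\bm{Y}\bm{T}$, and $\langle\nabla f(\bm{Y}),\bm{Y}\bm{T}\rangle=\langle \bm{Y}^\top\nabla f(\bm{Y}),\bm{T}\rangle$.

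\textbf{Assertions (iv) and (v).} At $\bm{E}=\bm{0}$, $\bm{F}=\bm{0}$, every skew-symmetric $\bm{S}$ and $\bm{T}$ commutes with $\bm{0}$, and $\bm{Y}=\bm{X}$. So (ii) holds for all $\bm{S}\in\mathcal{S}(m)$. Since $\nabla_{\bm{E}}f_{\bm{\Omega}}$ is by definition the gradient within the space $\mathcal{S}(m)$, it equals the orthogonal projection of $\bm{X}\nabla f(\bm{X})^\top$ onto the skew-symmetric matrices, namely $\frac12[\bm{M}-\bm{M}^\top]$ with $\bm{M}=\bm{X}\nabla f(\bm{X})^\top$. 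This is exactly (iv), and the same projection argument for $\bm{T}$ gives (v).

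The main obstacle is bookkeeping of transposes and signs in (ii)--(iii), which I would double-check on the identity $\langle \bm{A},\bm{S}^\top\bm{B}\rangle={\rm tr}(\bm{A}^\top\bm{S}^\top\bm{B})=\langle \bm{S},\bm{B}\bm{A}^\top\rangle$.
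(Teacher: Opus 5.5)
Your proposal is correct and follows essentially the same route as the paper: directional derivatives of the composite map, with the commutation hypothesis used only to write $e^{\bm{E}+t\bm{S}}=e^{\bm{E}}e^{t\bm{S}}$ in (ii)--(iii), trace cyclicity to move $\bm{S}^\top$ (resp.\ $\bm{T}$) across, and, at $\bm{E}=\bm{F}=\bm{0}$, identifying the gradient in $\mathcal{S}(m)$ (resp.\ $\mathcal{S}(n)$) as the skew-symmetric part of $\bm{X}\nabla f(\bm{X})^\top$ (resp.\ $\bm{X}^\top\nabla f(\bm{X})$). Your transpose and sign bookkeeping checks out in each case.
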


\begin{proof} Let $\bm {\omega}\in\mathbb{R}^n$. Since $f_{\bm{\Omega}}$ is analytic on $\mathcal{M}$ as noted early,  one checks  by the definition of the directional derivative  that
\begin{align}
\langle \nabla_{\bm{\sigma}}f_{\bm{\Omega}}(\bm{\sigma},\bm{E},\bm{F}),
\bm{\omega}\rangle
&={\rm tr}\left(\bm{V}e^{\bm{F}} \left(\mathcal{A}^*\mathcal{A}\left((\bm{\bm{U}}e^{\bm{E}})^\top \mathcal{D}(\bm{\sigma})\bm{V}e^{\bm{F}}\right)^\top-\mathcal{A}^*(\bm{b})^\top\right)(\bm{U}e^{\bm{E}})^\top \mathcal{D}(\bm{\omega})\right)\notag\\
&=\Big\langle{\rm diag}\left(\bm{U}e^{\bm{E}}\left(\mathcal{A}^*\mathcal{A}\left((\bm{U}e^{\bm{E}})^\top \mathcal{D}(\bm{\sigma})\bm{V}e^{\bm{F}}\right)-\mathcal{A}^*(\bm{b})\right)(\bm{V}e^{\bm{F}} )^\top\right),\bm{\omega}\Big\rangle;\notag
\end{align}
hence assertion (i) is seen to hold as  $\bm {\omega}$ is arbitrary.

 Since the proof of assertion (iii) (resp. (v)) is similar to that of assertion (ii) (resp. (iv)), below we only prove assertions (ii) and (iv) for brevity.  Note by Lemma \ref{lemmaexplog} (iii) that
\begin{equation*}\label{Zassenhaus}
e^{\bm{E}+t\bm{S}}=e^{\bm{E}}e^{t \bm{S}}=e^{\bm{E}}\left(\bm{I}+t\bm{S}\right)+o(t).
\end{equation*}
Then, as we do for $\langle \nabla_{\bm{\sigma}}f_{\bm{\Omega}}(\bm{\sigma},\bm{E},\bm{F}),
\bm{\omega}\rangle$, we have
\begin{equation}\label{diff-E-20}
\begin{aligned}
\langle \nabla_{\bm{E}}f_{\bm{\Omega}}(\bm{\sigma},\bm{E},\bm{F}), \bm{S}\rangle
&=\Big\langle \mathcal{A} \left((\bm{U}e^{\bm{E}})^\top \mathcal{D}(\bm{\sigma})\bm{V}e^{\bm{F}}\right)-\bm{b}, \mathcal{A}\left(\bm{S}^\top(\bm{U}e^{\bm{E}})^\top \mathcal{D}(\bm{\sigma})\bm{V}e^{\bm{F}}\right)  \Big\rangle \\
&=\Big\langle (\bm{U}e^{\bm{E}})^\top \mathcal{D}(\bm{\sigma})\bm{V} e^{\bm{F}} \left(\mathcal{A}^*\mathcal{A} \left((\bm{U}e^{\bm{E}})^\top \mathcal{D}(\bm{\sigma})\bm{V}e^{\bm{F}}\right)-\mathcal{A}^*(\bm{b})\right)^\top, \bm{S}\Big\rangle.
\end{aligned}
\end{equation}
Hence,  assertion (ii) is proved.
In particular, letting $\bm{E}=\bm{F}={\bm 0}$ in \eqref{diff-E-20}, one has that,  for any $\bm{S}\in\mathcal{S}(m)$,
\begin{align*}\label{diff-E-20a}
\langle \nabla_{\bm{E}}f_{\bm{\Omega}}(\bm{\sigma},{\bm 0},{\bm 0}), \bm{S}\rangle
=\Big\langle \bm{X}\nabla f(\bm{X})^\top, \bm{S}\Big\rangle=\Big\langle\frac{1}{2}(\bm{X}\nabla f(\bm{X})^\top-\nabla f(\bm{X}){\bm{X}}^\top),\;\bm{\bm{S}}\Big\rangle.
\end{align*}
This implies that assertion (iv) holds (noting that $\frac{1}{2}(\bm{X}\nabla f(\bm{X})^\top-\nabla f(\bm{X}){\bm{X}}^\top)\in\mathcal{S}(m)$) and the proof is completed.
\end{proof}

The following  proposition  shows some Lipschitz-like properties related to the partial derivatives of  $f_{\bm{\Omega}}$.
For convenience, we write for any $\bm{X}\in\mathbb{R}^{m\times n}$ and $\bm{\sigma}\in\mathbb{R}^{n}$ that
\begin{equation}\label{fdef}
\Gamma_{\bm{X}}(\bm{\sigma}):=\Vert\nabla f(\bm{X})\Vert_F+\Vert\mathcal{A}^*\mathcal{A}\Vert_{{\rm op}}\Vert \bm{\sigma}\Vert.
\end{equation}

\begin{lemma}\label{prop2}
Let   $\bm{\omega}\in\mathbb{R}^{n}$, $\bm{\Omega}:=(\bm{U},\bm{V})\in\mathcal{O}(m)\times\mathcal{O}(n),\;\bm{\Theta}:=(\bm{P},\bm{Q})\in\mathcal{O}(m)\times\mathcal{O}(n)$ and $(\bm{\sigma},\;\bm{E},\;\bm{F})\in \mathcal{M}$. Set $\bm{X}:=\bm{U}^\top \mathcal{D}(\bm{\sigma})\bm{V}$. Then the following assertions hold:
\begin{itemize}
  \item [\rm (i)]$\Vert\nabla_{\bm{\sigma}} f_{\bm{\Omega}}(\bm{\sigma},\bm{0},\bm{0})-\nabla_{\bm{\sigma}} f_{\bm{\Theta}}(\bm{\omega},\bm{0},\bm{0}) \Vert\le \Vert \mathcal{A}^* \mathcal{A}\Vert_{\rm{op}}\Vert \bm{\sigma}-\bm{\omega}\Vert+\Gamma_{\bm{X}}(\bm{\sigma})\|\bm{U}-\bm{P}\|_F+\Gamma_{\bm{X}}(\bm{\omega})\|\bm{V}-\bm{Q}\|_F$;
  \item [\rm (ii)]$\Vert\nabla_{\bm{E}} f_{\bm{\Omega}}(\bm{\sigma},\bm{0},\bm{0})-\nabla_{\bm{E}} f_{\bm{\Theta}}(\bm{\omega},\bm{0},\bm{0})\Vert_{F} \leq \Gamma_{\bm{X}}(\bm{\omega})(\|\bm{\sigma}-\bm{\omega}\|+\|\bm{\sigma}\|\cdot\| \bm{U}-\bm{P}\|_F+\|\bm{\omega}\|\cdot\| \bm{V}-\bm{Q}\|_F);$
  \item [\rm (iii)]$\Vert\nabla_{\bm{F}} f_{\bm{\Omega}}(\bm{\sigma},\bm{0},\bm{0})-\nabla_{\bm{F}} f_{\bm{\Theta}}(\bm{\omega},\bm{0},\bm{0}) \Vert_{F} \leq \Gamma_{\bm{X}}(\bm{\omega}) (\|\bm{\sigma}-\bm{\omega}\|+\|\bm{\sigma}\|\cdot\| \bm{U}-\bm{P}\|_F+\|\bm{\omega}\|\cdot\| \bm{V}-\bm{Q}\|_F).$
 \item [\rm (iv)]$\Vert \nabla_{\bm{\sigma}} f_{\bm{\Omega}}(\bm{\sigma},\bm{E},\bm{F})- \nabla_{\bm{\sigma}} f_{\bm{\Omega}}(\bm{\sigma},\bm{0},\bm{0})\Vert\leq \Gamma_{\bm{X}}(\bm{\sigma}) (\Vert \bm{E}\Vert_{F} +\Vert \bm{F}\Vert_{F})$;
  \item [\rm (v)]$\Vert\nabla_{\bm{\sigma}}f_{\bm{\Omega}}(\bm{\sigma},\bm{E},\bm{F})-\nabla_{\bm{\sigma}}f_{\bm{\Omega}}(\bm{\omega},\bm{E},\bm{F})\Vert \leq \Vert \mathcal{A}^*\mathcal{A}\Vert_{\rm{op}}\cdot\Vert\bm{\sigma}-\bm{\omega}\Vert$;
  \item [\rm (vi)]$\langle\nabla_{\bm{E}}f_{\bm{\Omega}}(\bm{\sigma},\bm{E},\bm{F})-\nabla_{\bm{E}}f_{\bm{\Omega}}(\bm{\sigma},\bm{0},\bm{0}),\bm{E}\rangle
\leq \Gamma_{\bm{X}}(\bm{\sigma})\Vert \bm{\sigma} \Vert(\Vert\bm{E}\Vert_F^2+\Vert\bm{E}\Vert_F\Vert\bm{F}\Vert_F)$;
  \item[\rm(vii)]$\langle\nabla_{\bm{F}}f_{\bm{\Omega}}(\bm{\sigma},\bm{E},\bm{F})-\nabla_{\bm{F}}f_{\bm{\Omega}}(\bm{\sigma},\bm{0},\bm{0}),\bm{F}\rangle
\leq \Gamma_{\bm{X}}(\bm{\sigma})\Vert \bm{\sigma} \Vert (\Vert\bm{F}\Vert_F^2+\Vert\bm{E}\Vert_F\Vert\bm{F}\Vert_F)$.
\end{itemize}
\end{lemma}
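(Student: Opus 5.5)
The plan is to work directly from the explicit gradient formulas in Proposition \ref{prop1} and \eqref{fparsigma}. Every difference is then decomposed by a telescoping argument into pieces in which only one factor changes, and each piece is bounded using the following facts.
\begin{itemize}
\item Orthogonal matrices do not change Frobenius norms.
\item $\Vert\mathcal{D}(\bm{\sigma})\Vert_F=\Vert\bm{\sigma}\Vert$.
\item $\Vert{\rm diag}(\bm{M})\Vert\le\Vert \bm{M}\Vert_F$.
\item $\Vert\nabla f(\bm{Y})-\nabla f(\bm{Z})\Vert_F\le\Vert\mathcal{A}^*\mathcal{A}\Vert_{\rm op}\Vert \bm{Y}-\bm{Z}\Vert_F$.
\end{itemize}
Throughout, set $\bm{Z}:=\bm{P}^\top\mathcal{D}(\bm{\omega})\bm{Q}$ and $\bm{Y}:=(\bm{U}e^{\bm{E}})^\top\mathcal{D}(\bm{\sigma})\bm{V}e^{\bm{F}}$. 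The two basic estimates are
\[
\Vert \bm{X}-\bm{Z}\Vert_F\le \Vert\bm{\sigma}-\bm{\omega}\Vert+\Vert\bm{\sigma}\Vert\Vert\bm{U}-\bm{P}\Vert_F+\Vert\bm{\omega}\Vert\Vert\bm{V}-\bm{Q}\Vert_F,
\qquad
\Vert e^{\bm{E}}-\bm{I}\Vert_F\le \Vert\bm{E}\Vert_F .
\]
The second holds because $e^{\bm{E}}$ is orthogonal and the eigenvalues of $\bm{E}$ are purely imaginary, so $|e^{i\theta}-1|\le|\theta|$ on the spectrum (alternatively, I would appeal to Lemma \ref{lemmaexplog}).

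For (i), I would write
\[
\bm{U}\nabla f(\bm{X})\bm{V}^\top-\bm{P}\nabla f(\bm{Z})\bm{Q}^\top
=(\bm{U}-\bm{P})\nabla f(\bm{X})\bm{V}^\top+\bm{P}\nabla f(\bm{X})(\bm{V}-\bm{Q})^\top+\bm{P}(\nabla f(\bm{X})-\nabla f(\bm{Z}))\bm{Q}^\top .
\]
The third term is handled more carefully. Since $\bm{P}\bm{Z}\bm{Q}^\top=\mathcal{D}(\bm{\omega})$ and $\bm{U}\bm{X}\bm{V}^\top=\mathcal{D}(\bm{\sigma})$, the quadratic part of the diagonal difference equals ${\rm diag}(\bm{P}\mathcal{A}^*\mathcal{A}(\bm{X}-\bm{Z})\bm{Q}^\top)$. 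I would then reorganize so that the $\bm{\sigma}-\bm{\omega}$ term carries coefficient $\Vert\mathcal{A}^*\mathcal{A}\Vert_{\rm op}$, while the $\bm{U}-\bm{P}$ and $\bm{V}-\bm{Q}$ terms carry the coefficients $\Gamma_{\bm{X}}(\bm{\sigma})$ and $\Gamma_{\bm{X}}(\bm{\omega})$, each of which absorbs $\Vert\nabla f(\bm{X})\Vert_F$ plus $\Vert\mathcal{A}^*\mathcal{A}\Vert_{\rm op}$ times the corresponding singular-value norm.

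For (ii) and (iii), I would use (iv) and (v) of Proposition \ref{prop1}. The difference $\bm{X}\nabla f(\bm{X})^\top-\bm{Z}\nabla f(\bm{Z})^\top$ splits as
\[
(\bm{X}-\bm{Z})\nabla f(\bm{X})^\top+\bm{Z}(\nabla f(\bm{X})-\nabla f(\bm{Z}))^\top .
\]
This gives the bound $(\Vert\nabla f(\bm{X})\Vert_F+\Vert\mathcal{A}^*\mathcal{A}\Vert_{\rm op}\Vert\bm{\omega}\Vert)\Vert\bm{X}-\bm{Z}\Vert_F=\Gamma_{\bm{X}}(\bm{\omega})\Vert\bm{X}-\bm{Z}\Vert_F$, and the factor $\frac12$ together with the triangle inequality handles the antisymmetrization.

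For (iv) and (v), Proposition \ref{prop1}(i) gives the gradient as ${\rm diag}(\bm{U}e^{\bm{E}}\nabla f(\bm{Y})(\bm{V}e^{\bm{F}})^\top)$. For (v) the orthogonal factors are the same on both sides, so only $\nabla f(\bm{Y})-\nabla f(\bm{Y}')$ changes, with $\Vert\bm{Y}-\bm{Y}'\Vert_F=\Vert\bm{\sigma}-\bm{\omega}\Vert$; this gives the Lipschitz bound at once. For (iv), I would write
\[
\bm{U}e^{\bm{E}}\nabla f(\bm{Y})e^{-\bm{F}}\bm{V}^\top-\bm{U}\nabla f(\bm{X})\bm{V}^\top
\]
and exploit the diagonal structure: ${\rm diag}(e^{\bm{E}}\bm{U}\bm{Y}\bm{V}^\top e^{-\bm{F}})$ equals ${\rm diag}(\mathcal{D}(\bm{\sigma}))$. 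Writing $\nabla f(\bm{Y})=\mathcal{A}^*\mathcal{A}(\bm{Y})-\mathcal{A}^*\bm{b}$ and splitting into the $e^{\bm{E}}-\bm{I}$ and $e^{-\bm{F}}-\bm{I}$ pieces plus the change $\nabla f(\bm{Y})-\nabla f(\bm{X})$ (note $\Vert\bm{Y}-\bm{X}\Vert_F\le\Vert\bm{\sigma}\Vert(\Vert\bm{E}\Vert_F+\Vert\bm{F}\Vert_F)$), I would obtain the coefficient $\Vert\nabla f(\bm{X})\Vert_F+\Vert\mathcal{A}^*\mathcal{A}\Vert_{\rm op}\Vert\bm{\sigma}\Vert=\Gamma_{\bm{X}}(\bm{\sigma})$.

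For (vi) and (vii), the key is that $\bm{S}=\bm{E}$ commutes with $\bm{E}$, so Proposition \ref{prop1}(ii) applies. It gives
\[
\langle\nabla_{\bm{E}}f_{\bm{\Omega}}(\bm{\sigma},\bm{E},\bm{F})-\nabla_{\bm{E}}f_{\bm{\Omega}}(\bm{\sigma},\bm{0},\bm{0}),\bm{E}\rangle=\langle \bm{Y}\nabla f(\bm{Y})^\top-\bm{X}\nabla f(\bm{X})^\top,\bm{E}\rangle .
\]
I would split this as $(\bm{Y}-\bm{X})\nabla f(\bm{X})^\top+\bm{Y}(\nabla f(\bm{Y})-\nabla f(\bm{X}))^\top$ and use $\Vert\bm{Y}\Vert_F=\Vert\bm{\sigma}\Vert$ together with $\Vert\bm{Y}-\bm{X}\Vert_F\le\Vert\bm{\sigma}\Vert(\Vert\bm{E}\Vert_F+\Vert\bm{F}\Vert_F)$. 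Cauchy–Schwarz then gives $\Gamma_{\bm{X}}(\bm{\sigma})\Vert\bm{\sigma}\Vert(\Vert\bm{E}\Vert_F+\Vert\bm{F}\Vert_F)\Vert\bm{E}\Vert_F$, which is exactly the claimed bound; (vii) follows symmetrically.

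The main obstacle I expect is getting exactly the stated constants in (i) and (iv). A naive triangle inequality produces extra terms such as $\Vert\mathcal{A}^*\mathcal{A}\Vert_{\rm op}\Vert\bm{\sigma}\Vert$ multiplying $\Vert\bm{U}-\bm{P}\Vert_F$ twice. To avoid this, the ordering of the telescoping must be chosen carefully, using the identity ${\rm diag}(\bm{U}\bm{X}\bm{V}^\top)=\bm{\sigma}$ (and its analogue for $\bm{Z}$) to cancel the quadratic contribution of the rotation changes, so that each rotation perturbation is charged only once against $\Gamma_{\bm{X}}$.
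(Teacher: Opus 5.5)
Your decompositions are essentially the paper's, and they suffice. You use the same bound on $\Vert\bm{X}-\bm{Z}\Vert_F$, the same three-piece split for (i), the split $(\bm{X}-\bm{Z})\nabla f(\bm{X})^\top+\bm{Z}(\nabla f(\bm{X})-\nabla f(\bm{Z}))^\top$ for (ii)--(iii), and Proposition \ref{prop1}(ii) with $\bm{S}=\bm{E}$ for (vi)--(vii). The only organizational difference is that the paper obtains (iv)--(vii) by reapplying (i) and (ii) with $\bm{\Theta}_1:=(\bm{U}e^{\bm{E}},\bm{V}e^{\bm{F}})$ instead of redoing the splits with $\bm{Y}$. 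The estimates are the same.

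You should drop the ``main obstacle'' and its remedy: the obstacle does not exist, and the remedy is unsound. Bounding your displayed decomposition term by term already gives (i) with exactly the stated constants, and this is precisely the paper's proof. The first two pieces contribute $\Vert\nabla f(\bm{X})\Vert_F(\Vert\bm{U}-\bm{P}\Vert_F+\Vert\bm{V}-\bm{Q}\Vert_F)$. The third contributes $\Vert\mathcal{A}^*\mathcal{A}\Vert_{\rm op}\Vert\bm{X}-\bm{Z}\Vert_F$, which through your bound on $\Vert\bm{X}-\bm{Z}\Vert_F$ charges $\Vert\mathcal{A}^*\mathcal{A}\Vert_{\rm op}\Vert\bm{\sigma}\Vert$ to $\Vert\bm{U}-\bm{P}\Vert_F$ once and $\Vert\mathcal{A}^*\mathcal{A}\Vert_{\rm op}\Vert\bm{\omega}\Vert$ to $\Vert\bm{V}-\bm{Q}\Vert_F$ once. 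Double counting would only occur if you expanded $\nabla f(\bm{X})=\mathcal{A}^*\mathcal{A}(\bm{X})-\mathcal{A}^*(\bm{b})$ inside the first two pieces; keeping $\Vert\nabla f(\bm{X})\Vert_F$ whole is exactly what $\Gamma_{\bm{X}}$ is built for.

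Your ``more careful'' step, by contrast, is either false or vacuous. If ``the diagonal difference'' means the full difference, its quadratic part is ${\rm diag}(\bm{U}\mathcal{A}^*\mathcal{A}(\bm{X})\bm{V}^\top-\bm{P}\mathcal{A}^*\mathcal{A}(\bm{Z})\bm{Q}^\top)$. This in general differs from ${\rm diag}(\bm{P}\mathcal{A}^*\mathcal{A}(\bm{X}-\bm{Z})\bm{Q}^\top)$. For example, take $\mathcal{A}^*\mathcal{A}$ the identity, $\bm{\omega}=\bm{\sigma}$ with nonzero entries, $\bm{V}=\bm{Q}$, and $\bm{P}\bm{U}^\top$ a nontrivial rotation in the first two coordinates: the former vanishes and the latter does not. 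If it means the third piece alone, it holds trivially, and the identities $\bm{U}\bm{X}\bm{V}^\top=\mathcal{D}(\bm{\sigma})$, $\bm{P}\bm{Z}\bm{Q}^\top=\mathcal{D}(\bm{\omega})$ play no role. These identities concern $\bm{X}$ and $\bm{Z}$, whereas the gradients involve $\mathcal{A}^*\mathcal{A}(\bm{X})$, so they cannot cancel anything.

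The same applies to the ``diagonal structure'' in (iv). Split into $\bm{U}e^{\bm{E}}(\nabla f(\bm{Y})-\nabla f(\bm{X}))e^{-\bm{F}}\bm{V}^\top$, $\bm{U}(e^{\bm{E}}-\bm{I})\nabla f(\bm{X})e^{-\bm{F}}\bm{V}^\top$ and $\bm{U}\nabla f(\bm{X})(e^{-\bm{F}}-\bm{I})\bm{V}^\top$. Together with $\Vert\bm{Y}-\bm{X}\Vert_F\le\Vert\bm{\sigma}\Vert(\Vert\bm{E}\Vert_F+\Vert\bm{F}\Vert_F)$, this gives $\Gamma_{\bm{X}}(\bm{\sigma})(\Vert\bm{E}\Vert_F+\Vert\bm{F}\Vert_F)$ directly. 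With this detour removed, your argument is complete.
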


\begin{proof}(i).  Set $\bm{Z}:=\bm{P}^\top \mathcal{D}(\bm{\omega})\bm{Q}$.  Then, using a standard technique, we have
\begin{equation}\label{X-Z}
\Vert\bm{X}-\bm{Z}\Vert_F\le\|\bm{\sigma}-\bm{\omega}\|+\|\bm{\sigma}\|\cdot\| \bm{U}-\bm{P}\|_F+\|\bm{\omega}\|\cdot\| \bm{V}-\bm{Q}\|_F.
\end{equation}
By  \eqref{fparsigma} and using a standard argument, one has
\begin{equation}\label{nabsig}
\begin{aligned}
&\Vert\nabla_{\bm{\sigma}} f_{\bm{\Omega}}(\bm{\sigma},\bm{0},\bm{0})-\nabla_{\bm{\sigma}} f_{\bm{\Theta}}(\bm{\omega},\bm{0},\bm{0}) \Vert\\
\le\ & \Vert \bm{U}\nabla f(\bm{X})\bm{V}^\top-\bm{P}\nabla f(\bm{X})\bm{Q}^\top\Vert_F+\Vert\bm{P}[\nabla f(\bm{X})-\nabla f(\bm{Z})]\bm{Q}^\top\Vert_F\\
\leq\ & \Vert\nabla f(\bm{X})\Vert_F\| \bm{U}-\bm{P}\|_F+\Vert\nabla f(\bm{X})\Vert_F\| \bm{V}-\bm{Q}\|_F+\Vert\mathcal{A}^*\mathcal{A}\Vert_{{\rm op}}\Vert \bm{X}-\bm{Z}\Vert_F.
\end{aligned}
\end{equation}
Substituting \eqref{X-Z} into \eqref{nabsig} and using \eqref{fdef}, one checks that assertion (i) holds.

(ii) and (iii). We only need to prove assertion (ii) as the proof of (iii) is similar and is omitted. By  \eqref{fparsigma}, we deduce
\begin{align*}\label{lipnablaE}
\Vert\nabla_{\bm{E}} f_{\bm{\Omega}}(\bm{\sigma},\bm{0},\bm{0})-\nabla_{\bm{E}} f_{\bm{\Theta}}(\bm{\omega},\bm{0},\bm{0}) \Vert_F
&\leq \Vert\bm{X}\nabla f(\bm{X})^\top-\bm{Z}\nabla f(\bm{Z})^\top\Vert_F\notag\\
&\leq (\Vert\nabla f(\bm{X})\Vert_F+\Vert\mathcal{A}^*\mathcal{A}\Vert_{{\rm op}}\Vert \bm{Z}\Vert_F)\Vert \bm{X}-\bm{Z}\Vert_F\notag\\
&\leq \Gamma_{\bm{X}}(\bm{\omega}) \Vert \bm{X}-\bm{Z}\Vert_F,
\end{align*}
where the last inequality holds because of \eqref{fdef} and the fact that $\Vert\bm{Z}\Vert_F=\Vert\bm{\omega}\Vert$. Thus, assertion (ii) is seen to hold by \eqref{X-Z}.

(iv) and (v). Set $\bm{\Theta}_1:=(\bm{P}_1, \bm{Q}_1)=(\bm{U}e^{\bm{E}},\bm{V}e^{\bm{F}})$.   Then,
by Proposition \ref{prop1} (i), one checks that
\begin{equation}\label{Theta1}\nabla_{\bm{\sigma}}f_{\bm{\Omega}}(\bm{\sigma},\bm{E},\bm{F})
=\nabla_{\bm{\sigma}}f_{\bm{\Theta}_1}(\bm{\sigma},\bm{0},\bm{0})
\quad{\rm and }\quad
\nabla_{\bm{\sigma}}f_{\bm{\Omega}}(\bm{\omega},\bm{E},\bm{F})
=\nabla_{\bm{\sigma}}f_{\bm{\Theta}_1}(\bm{\omega},\bm{0},\bm{0}).\end{equation}
Thus, applying assertion (i) (with $\bm{\sigma}$, $\bm{\Theta}_1$ in place of $\bm{\omega}$ and $\bm{\Theta}$), we have
$$\Vert \nabla_{\bm{\sigma}} f_{\bm{\Omega}}(\bm{\sigma},\bm{E},\bm{F})- \nabla_{\bm{\sigma}} f_{\bm{\Omega}}(\bm{\sigma},\bm{0},\bm{0})\Vert
\le\Gamma_{\bm{X}}(\bm{\sigma})(\|\bm{U}-\bm{U}e^{\bm{E}}\|_F+\|\bm{V}-\bm{V}e^{\bm{F}}\|_F)\le\Gamma_{\bm{X}}(\bm{\sigma})(\|\bm{E}\|_F+\|\bm{F}\|_F)$$
where the last inequality holds because of  Lemma \ref{lemmaexplog} (iv). That is, assertion (iv) is shown. To show (v),  applying assertion (i) (with $\bm{\Theta}_1$ in place of $\bm{\Omega}$ and $\bm{\Theta}$), we have
$$\Vert\nabla_{\bm{\sigma}}f_{\bm{\Theta}_1}(\bm{\sigma},\bm{0},\bm{0})- \nabla_{\bm{\sigma}} f_{\bm{\Theta}_1}(\bm{\omega},\bm{0},\bm{0})\Vert\leq \Vert \mathcal{A}^*\mathcal{A}\Vert_{\rm{op}}\cdot\Vert\bm{\sigma}-\bm{\omega}\Vert.$$
Hence, assertion (v) holds by \eqref{Theta1}.

(vi) and (vii). We only need to prove assertion (vi) as the proof of (vii) is similar and is omitted.   By Proposition \ref{prop1} (ii) and the definition of  $\bm{\Theta}_1$, one checks
 $$\langle\nabla_{\bm{E}}f_{\bm{\Omega}}(\bm{\sigma},\bm{E},\bm{F}),\bm{E}\rangle=\langle \nabla_{\bm{E}}f_{\bm{\Theta}_1}(\bm{\sigma},\bm{0},\bm{0}),\bm{E}\rangle.$$
Then, thanks to assertion (ii), we deduce
$$\begin{array}{lll}|\langle\nabla_{\bm{E}}f_{\bm{\Omega}}(\bm{\sigma},\bm{E},\bm{F})-\nabla_{\bm{E}}f_{\bm{\Omega}}(\bm{\sigma},\bm{0},\bm{0}),\bm{E}\rangle|
&\le\Vert\nabla_{\bm{E}}f_{\bm{\Theta}_1}(\bm{\sigma},\bm{0},\bm{0})-\nabla_{\bm{E}}f_{\bm{\Omega}}(\bm{\sigma},\bm{0},\bm{0})\Vert_F\cdot\Vert\bm{E}\Vert_F\\
&\le\Gamma_{\bm{X}}(\bm{\sigma})\|\bm{\sigma}\|\cdot\Vert\bm{E}\Vert_F(\| \bm{U}-\bm{U}e^{\bm{E}}\|_F+\| \bm{V}-\bm{V}e^{\bm{F}}\|_F).\end{array}$$
Hence, by Lemma \ref{lemmaexplog} (iv), one has that assertion (vi) holds.
Therefore, the proof is complete.
\end{proof}

Recall  that $l_{\sigma}^k$,  $l_{\Omega}^k$ and $c_k$ are defined by \eqref{Ak}, \eqref{Bk} and \eqref{Bk2} respectively.  Define \begin{equation}\label{alphakdef}\alpha_k:=\min\left\{\frac{1}{2}\left(\frac{1}{t_{k}}-l_{\sigma}^k\right),\;\frac{1}{s_k}-\frac{1}{2}c_ks_k- l_{\Omega}^k\right\}\quad{\rm for\;  each }\quad k\in\N.\end{equation}
The following lemma provides  a lower bound for $\{\alpha_k\}$ which will be used for establishing the sufficient descent property of  $\{\mathcal{F}(\bm{X}_{k})\}$ in Proposition \ref{prop4} below.

\begin{lemma}\label{alphabound}Assume that the stepsizes $\{t_k\}$ and $\{s_k\}$ are selected by strategy ${\rm (ii)}$. Then, it holds that
\begin{equation*}\label{alphalater}
\alpha_k\ge\min\left\{\frac{\mu}{2(1-\mu)}\|\mathcal{A}^*\mathcal{A}\|_{{\rm op}},\quad\frac{\mu^3(2-\mu)}{2(1-\mu)}\right\}\quad{\rm for\; each}\quad k\in\N.
\end{equation*}
\end{lemma}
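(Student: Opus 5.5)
The plan is to bound the two entries in the minimum \eqref{alphakdef} separately, using only the upper bounds in ($A_1$) and ($A_2$) together with the definitions \eqref{Ak}, \eqref{xfbound}. The lower bounds in ($A_1$), ($A_2$) play no role.

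\emph{First entry.} By ($A_1$), $t_k\le (1-\mu)/l_\sigma^k$, so $1/t_k\ge l_\sigma^k/(1-\mu)$. Hence
\[
\tfrac12\left(\tfrac1{t_k}-l_\sigma^k\right)\ge \frac{\mu}{2(1-\mu)}\,l_\sigma^k .
\]
By \eqref{Ak}, $l_\sigma^k\ge\|\mathcal{A}^*\mathcal{A}\|_{\rm op}$, since the other two summands are nonnegative. This yields the first bound.

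\emph{Second entry.} Put $g_k(s):=\frac1s-\frac12c_ks-l_\Omega^k$ for $s>0$, and $h_k(s):=s\,g_k(s)=1-\frac12c_ks^2-l_\Omega^ks$.

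The first step is to check that $\bar s_k$ in \eqref{xfbound} is the positive root of $h_k$. Rationalizing, $\bar s_k=(\sqrt{(l_\Omega^k)^2+2c_k}-l_\Omega^k)/c_k$ when $c_k>0$, and $\bar s_k=1/l_\Omega^k$ when $c_k=0$. (If also $l_\Omega^k=0$, then $\bar s_k=\infty$, $h_k\equiv1$, and the argument below is trivial.)

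Since $c_k,l_\Omega^k\ge0$, $h_k$ is concave on $[0,\bar s_k]$, with $h_k(0)=1$ and $h_k(\bar s_k)=0$. Concavity gives $h_k(s)\ge 1-s/\bar s_k$ on $[0,\bar s_k]$. By ($A_2$), $s_k\le(1-\mu)\bar s_k$, so $h_k(s_k)\ge\mu$, i.e.
\[
g_k(s_k)\ge \mu/s_k .
\]
Using the other upper bound in ($A_2$), $s_k\le 1/\mu$, we get $g_k(s_k)\ge\mu^2$.

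\emph{Comparison with the stated constant.} It remains to verify $\mu^2\ge \frac{\mu^3(2-\mu)}{2(1-\mu)}$, i.e. $\phi(\mu):=\frac{\mu(2-\mu)}{2(1-\mu)}\le1$ on $(0,\frac12]$. The function $\phi$ is increasing there, since $\mu(2-\mu)$ increases and $1-\mu$ decreases. At the endpoint, $\phi(\frac12)=\frac34<1$.

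Alternatively, one can compute directly $g_k((1-\mu)\bar s_k)=\frac{\mu(2-\mu)}{1-\mu}\cdot\frac{c_k\bar s_k}{2}+\frac{\mu}{1-\mu}l_\Omega^k$, using $1/\bar s_k=\frac{c_k\bar s_k}2+l_\Omega^k$. Monotonicity of $g_k$ then shows where the authors' constant arises. The concavity route above is cleaner, though.

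Combining the two entries gives the claimed lower bound on $\alpha_k$. I expect the only delicate point to be the second entry: one must recognize $\bar s_k$ as the root of $h_k$ and combine the two upper bounds on $s_k$ correctly, handling the degenerate cases $c_k=0$ or $l_\Omega^k=0$.
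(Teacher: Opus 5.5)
Your proof is correct. The first entry is handled exactly as in the paper, but the second entry follows a genuinely different and sharper route.

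\textbf{How the paper handles the second entry.} It splits into two cases.
\begin{itemize}
\item If $l_\Omega^k\le\frac{\mu(2-\mu)}{2s_k}$, it uses $c_k\bar s_k^2\le2$, so that $c_ks_k^2\le2(1-\mu)^2$. Together with $s_k\le1/\mu$ this gives $\alpha_2^k\ge\frac{\mu^2(2-\mu)}{2}$.
\item If $l_\Omega^k>\frac{\mu(2-\mu)}{2s_k}$, it uses the monotonicity of $s\mapsto\frac1s-\frac12c_ks$, the crude estimate $1-\mu\le\frac1{1-\mu}$, and the identity $\frac1{\bar s_k}-\frac12c_k\bar s_k=l_\Omega^k$. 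This gives $\alpha_2^k\ge\frac{\mu}{1-\mu}l_\Omega^k$. The case hypothesis and $s_k\le1/\mu$ then yield $\frac{\mu^3(2-\mu)}{2(1-\mu)}$.
\end{itemize}
The condition $\mu\le\frac12$ is then used to merge the two cases.

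\textbf{What your argument does differently.} You apply the chord inequality to the concave function $h_k(s)=1-\frac12c_ks^2-l_\Omega^ks$ on $[0,\bar s_k]$. This avoids the case split and gives $h_k(s_k)\ge\mu$, hence $\alpha_2^k\ge\mu/s_k\ge\mu^2$.

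\textbf{What this buys.} Your bound is uniformly stronger than the stated constant, by the factor $\frac{2(1-\mu)}{\mu(2-\mu)}\ge\frac43$ on $(0,\frac12]$, which is of order $1/\mu$ as $\mu\to0$. The reason is that the paper's second case throws away the $c_k$-part of the normalization $\frac12c_k\bar s_k^2+l_\Omega^k\bar s_k=1$. It then loses a further factor $\mu$ through the threshold on $l_\Omega^k$. Your concavity argument keeps that normalization intact.

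The only extra work on your side is checking $\mu^2\ge\frac{\mu^3(2-\mu)}{2(1-\mu)}$ to recover the lemma as stated, which you do. You also treat the degenerate cases correctly: $c_k=0$, and $c_k=l_\Omega^k=0$ with $\bar s_k=+\infty$.
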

\begin{proof}For convenience, fix $k\in\N$ and write
\begin{equation*}\label{alphade}
\alpha_1^k:=\frac12\left(\frac{1}{t_{k}}-l_{\sigma}^k\right),\quad\alpha_2^k:=\frac{1}{s_k}-\frac{1}{2}c_ks_k -l_{\Omega}^k.
\end{equation*}
Then, thanks to the definition of $\alpha_k$, it suffices to prove
\begin{equation}\label{ls}\alpha_1^k\ge\frac{\mu}{2(1-\mu)}\|\mathcal{A}^*\mathcal{A}\|_{{\rm op}}\quad{\rm and}\quad \alpha_2^k\ge\frac{\mu^3(2-\mu)}{2(1-\mu)}.\end{equation}
By assumption ($A_1$), one has
\begin{equation*}\label{lsineq}
\alpha_1^k=\frac12\left(\frac{1}{t_{k}}-l_{\sigma}^k\right)\ge  \frac{\mu}{2(1-\mu)}l_{\sigma}^k.
\end{equation*}
Then the first inequality in  \eqref{ls} is seen to hold as $l_{\sigma}^k\ge\|\mathcal{A}^*\mathcal{A}\|_{{\rm op}}$ by  \eqref{Ak}.
To prove the second inequality in \eqref{ls}, we need to consider the following two cases: (a) $l_{\Omega}^k\le\frac{\mu(2-\mu)}{2s_k}$;
(b) $l_{\Omega}^k>\frac{\mu(2-\mu)}{2s_k}$.

Case (a). Note by \eqref{xfbound} and  assumption ($A_2$) that
\begin{equation*}\label{ckskes}c_ks_k^2\le(1-\mu)^2 c_k\bar{s}_k^2\leq2(1-\mu)^2.\end{equation*}
Then, by the assumption  $l_{\Omega}^k\le\frac{\mu(2-\mu)}{2s_k}$, we deduce
\begin{equation*}l_{\Omega}^k+\frac{1}{2}c_ks_k-\frac{1}{s_k}=l_{\Omega}^k+\frac{c_ks_k^2-2}{2s_k}\le\frac{\mu(\mu-2)}{2s_k}\le\frac{\mu^2(\mu-2)}{2},\end{equation*}
where the last inequality holds as $s_k\le\frac{1}{\mu}$ by  assumption ($A_2$). Then, thanks to the definition of $\alpha_2^k$ and the fact that $\mu\in(0,\;\frac12]$, one sees that the second inequality in \eqref{ls} holds.

Case (b).  Since $s_k\le(1-\mu)\bar{s}_k$ and $\mu\in(0,\frac12]$, one has
\begin{equation}\label{ckskii}\frac{1}{s_k}-\frac12c_ks_k\ge\frac{1}{(1-\mu)\bar{s}_k}-\frac12(1-\mu)c_k\bar{s}_k\ge\frac{1}{1-\mu}\left(\frac{1}{\bar{s}_k}-\frac12c_k\bar{s}_k\right).\end{equation}
On the other hand, by the definition of $\bar{s}_k$, it is easy to check that $l_{\Omega}^k+\frac{1}{2}c_k\bar{s}_k-\frac{1}{\bar{s}_k}=0$ or equivalently
\begin{equation}\label{ckskif}\frac{1}{\bar{s}_k}-\frac{1}{2}c_k\bar{s}_k=l_{\Omega}^k.\end{equation} Then, from \eqref{ckskii}, \eqref{ckskif} and the definition of $\alpha_2^k$, we derive
\begin{equation}\label{ckski}\alpha_2^k=\frac{1}{s_k}-\frac{1}{2}c_ks_k -l_{\Omega}^k\ge\frac{1}{1-\mu}\left(\frac{1}{\bar{s}_k}-\frac12c_k\bar{s}_k\right) - l_{\Omega}^k=\frac{\mu}{1-\mu}l_{\Omega}^k.\end{equation}
Then, noting $l_{\Omega}^k>\frac{\mu(2-\mu)}{2s_k}$ and $s_k\le\frac{1}{\mu}$, we obtain by \eqref{ckski} that the second inequality  in \eqref{ls} holds. Therefore, the proof is complete.
\end{proof}

For the remainder of this paper, let $\{\bm{X}_{k}\}$, $\{\bm{\Omega}_k=(\bm{U}_{k},\bm{V}_k)\}$ and $\{(\bm{\sigma}_k,\bm{E}_{k},\bm{F}_k)\}$ be the sequences generated by Algorithm \ref{alg1} with the stepsizes $\{t_k\}$, $\{s_k\}$ selected by strategies (i) or (ii).
We now present  the following proposition. In particular,  assertion ${\rm (i)}$ illustrates that  the stepsizes $\{t_k\}$ and $\{s_k\}$ selected by strategy ${\rm (i)}$ are attainable within  finite steps;  while assertion ${\rm (ii)}$  shows the sufficient descent property of  $\{\mathcal{F}(\bm{X}_{k})\}$ if the stepsizes $\{t_k\}$ and $\{s_k\}$ are selected by strategy ${\rm (ii)}$.

\begin{prop}\label{prop4} We have the following two assertions:
\begin{itemize}
                           \item [\rm (i)]Given $\alpha>0$, if the stepsizes $\{t_k\}$ and $\{s_k\}$ are selected by Strategy ${\rm (i)}$, then   $$t_k\ge\frac{\rho_t}{\alpha +l_{\sigma}^k}
\quad \text{ and}\quad s_k\ge\frac{2\rho_s}{\alpha+l_{\Omega}^k+\sqrt{(\alpha+l_{\Omega}^k)^2+2c_k}},\quad\text{ for each}\quad k\in \N;$$

                           \item [\rm (ii)]If  the stepsizes $\{t_k\}$ and $\{s_k\}$ are selected by Strategy ${\rm (ii)}$, then
\eqref{H_1} is satisfied with $\alpha:=\inf\limits_{k\ge0}\alpha_k>0$.
                         \end{itemize}

\end{prop}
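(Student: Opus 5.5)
The core of both assertions is one basic descent inequality: for any stepsizes $t_k,s_k>0$ with $s_k\le \min\{1/\|\nabla_{\bm E}^k\|_F,1/\|\nabla_{\bm F}^k\|_F\}$, we aim to show
\begin{equation*}
\mathcal{F}(\bm{X}_{k+1})+\alpha_k\Vert(\bm{\sigma}_{k+1}-\bm{\sigma}_k,\bm{E}_k,\bm{F}_k)\Vert^2\le\mathcal{F}(\bm{X}_k),
\end{equation*}
with $\alpha_k$ as in \eqref{alphakdef}. Assertion (ii) then follows from Lemma \ref{alphabound}, which gives $\inf_k\alpha_k>0$ under Strategy (ii). Assertion (i) follows by checking for which $(t,s)$ we have $\alpha_k\ge\alpha$.

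\textbf{Step 1: replace the Cayley transform by an exponential.} The Cayley iterate is $\bm U_{k+1}=\bm U_k\bm C_k^\top$ with $\bm C_k=(\bm I+\frac12\bm E_k)^{-1}(\bm I-\frac12\bm E_k)$. This is orthogonal and can be written as $e^{\widetilde{\bm E}_k}$ with $\widetilde{\bm E}_k=-2\,{\rm artanh}(\bm E_k/2)$, a skew matrix commuting with $\bm E_k$. This requires $\|\bm E_k\|<2$, which is guaranteed by $s_k\|\nabla_{\bm E}^k\|\le1$.

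The cleaner route, I think, is to write $\bm X_{k+1}=f$-argument of $f_{\bm\Omega_k}(\bm\sigma_{k+1},\widetilde{\bm E}_k,\widetilde{\bm F}_k)$ and estimate the discrepancy $\widetilde{\bm E}_k-(-\bm E_k)$ (up to sign conventions) as $O(\|\bm E_k\|^3)$. I expect this discrepancy, multiplied by the bound on $\nabla_{\bm E}f$, to be exactly what produces the $\frac12 c_k s_k$ term. Indeed $c_k$ involves $\|\bm\sigma_{k+1}\|$ and $\|\mathcal A^*\bm b\|$, which bound the gradient at the new point.

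\textbf{Step 2: split the decrease.} We write
\[
\mathcal F(\bm X_{k+1})-\mathcal F(\bm X_k)=[f_{\bm\Omega_k}(\bm\sigma_{k+1},\bm0,\bm0)+\lambda\|\bm\sigma_{k+1}\|_p^p-\mathcal F(\bm X_k)]+[f_{\bm\Omega_k}(\bm\sigma_{k+1},\widetilde{\bm E}_k,\widetilde{\bm F}_k)-f_{\bm\Omega_k}(\bm\sigma_{k+1},\bm0,\bm0)].
\]

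For the first bracket, apply Lemma \ref{lemma1} using Lemma \ref{prop2}(v) with constant $\|\mathcal A^*\mathcal A\|_{\rm op}$. Combining with optimality of \eqref{eq:sigma} (comparing against $\bm\sigma_k$) gives a decrease of $\frac12(\frac1{t_k}-\|\mathcal A^*\mathcal A\|_{\rm op})\|\bm\sigma_{k+1}-\bm\sigma_k\|^2$, which dominates the $\alpha_k$ term since $l_\sigma^k\ge\|\mathcal A^*\mathcal A\|_{\rm op}$.

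For the second bracket, apply Lemma \ref{lemma1} along the segment $t(\widetilde{\bm E}_k,\widetilde{\bm F}_k)$, using Lemma \ref{prop2}(vi),(vii). Here there is a wrinkle: the gradient is evaluated at $\bm\sigma_{k+1}$ rather than $\bm\sigma_k$. Lemma \ref{prop2}(ii),(iii) convert $\nabla_{\bm E}f_{\bm\Omega_k}(\bm\sigma_{k+1},\bm0,\bm0)$ into $\nabla_{\bm E}^k$ plus an error $\Gamma\|\bm\sigma_{k+1}-\bm\sigma_k\|$. We then absorb this cross term by Young's inequality, splitting it between the $\bm\sigma$ part, which is why $l_\sigma^k$ carries extra terms $2\|\nabla f(\bm X_k)\|_F+2\|\mathcal A^*\mathcal A\|\|\bm\sigma_k\|$, and the $(\bm E,\bm F)$ part, which produces $l_\Omega^k$. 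The linear term $\langle\nabla_{\bm E}^k,\widetilde{\bm E}_k\rangle\approx-\frac1{s_k}\|\bm E_k\|^2$ supplies the $\frac1{s_k}$ term.

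\textbf{Step 3: assertion (i).} It suffices that $\frac12(\frac1t-l_\sigma^k)\ge\alpha$ and $\frac1s-\frac12c_ks-l_\Omega^k\ge\alpha$. The first holds once $t\le 1/(2\alpha+l_\sigma^k)$. The second holds once $s$ is at most the positive root $2/(\alpha+l_\Omega^k+\sqrt{(\alpha+l_\Omega^k)^2+2c_k})$.

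One caveat: $c_k$ depends on $\bm\sigma_{k+1}$, hence on $t$. I would bound it uniformly, for instance using that $\|\bm\sigma_{k+1}\|$ is controlled. Alternatively, since $t$ is fixed once $i$ is chosen and $j\in\{i,i+1\}$, we can argue that the backtracking stops no later than one step past the threshold. This gives the stated lower bounds with the factors $\rho_t$ and $\rho_s$, after adjusting $2\alpha$ versus $\alpha$ in the constants.

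\textbf{Main obstacle.} I expect the hardest part to be Step 1 together with the second bracket: rigorously relating the Cayley update to the exponential parametrization and extracting precisely the constants $l_\Omega^k$ and $c_k$ from the cubic remainder.
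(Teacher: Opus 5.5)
The gap is in Step~2: your order of splitting the decrease cannot produce the constants $l_\sigma^k$ and $l_\Omega^k$. Both assertions depend on exactly those constants: (ii) through $\alpha_k$ and Lemma~\ref{alphabound}, and (i) through your thresholds.

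Because you take the $\bm\sigma$-step at $(\bm 0,\bm 0)$ first, every rotation-block quantity is evaluated at the new point. Let $\bm X':=\bm U_k^\top\mathcal{D}(\bm\sigma_{k+1})\bm V_k$. Lemma~\ref{prop2}(vi),(vii) at $\bm\sigma_{k+1}$ gives the curvature $\Gamma_{\bm X'}(\bm\sigma_{k+1})\Vert\bm\sigma_{k+1}\Vert$. Lemma~\ref{prop2}(ii),(iii) gives the cross-term constant $\Gamma_{\bm X_k}(\bm\sigma_{k+1})$, or $\Gamma_{\bm X'}(\bm\sigma_k)$ if you swap roles.

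After the Young split you describe, your $\bm\sigma$-coefficient is therefore $\frac12\big(\Vert\mathcal{A}^*\mathcal{A}\Vert_{\rm op}+2\Vert\nabla f(\bm X_k)\Vert_F+2\Vert\mathcal{A}^*\mathcal{A}\Vert_{\rm op}\Vert\bm\sigma_{k+1}\Vert-\frac1{t_k}\big)$, not $\frac12(l_\sigma^k-\frac1{t_k})$. Your rotation coefficient involves $\Vert\bm\sigma_{k+1}\Vert^2$ and $\Vert\nabla f(\bm X')\Vert_F\Vert\bm\sigma_{k+1}\Vert$, not the $\bm\sigma_k$, $\bm X_k$ quantities in $l_\Omega^k$. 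Converting back via $\Vert\bm\sigma_{k+1}\Vert\le\Vert\bm\sigma_k\Vert+\Vert\bm\sigma_{k+1}-\bm\sigma_k\Vert$ adds terms that are not in $\alpha_k$.

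So you would get a descent inequality with some other coefficient $\alpha_k'$. Lemma~\ref{alphabound} says nothing about $\alpha_k'$. The bounds ($A_1$), ($A_2$) are calibrated to $l_\sigma^k$ and $\bar s_k$, not to your constants, which moreover depend on $t_k$ through $\bm\sigma_{k+1}$.

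The missing idea is the paper's intermediate \emph{exponential} point $\hat{\bm X}_{k+1}:=(\bm U_ke^{\bm E_k})^\top\mathcal{D}(\bm\sigma_{k+1})\bm V_ke^{\bm F_k}$, combined with the reverse ordering.
\begin{itemize}
\item For $\hat{\bm X}_{k+1}$ the paper first moves $(\bm E,\bm F)$ at $\bm\sigma_k$. The linear term is then exactly $-\frac1{s_k}(\Vert\bm E_k\Vert_F^2+\Vert\bm F_k\Vert_F^2)$ and the curvature is $\Gamma_{\bm X_k}(\bm\sigma_k)\Vert\bm\sigma_k\Vert$.
\item It then moves $\bm\sigma$ with $(\bm E_k,\bm F_k)$ frozen, bounds the cross term by Lemma~\ref{prop2}(iv) at $\bm\sigma_k$, and splits it by Young. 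This is what yields $l_\sigma^k$ and $l_\Omega^k$.
\item The Cayley discrepancy is handled separately by convexity, $f(\bm X_{k+1})-f(\hat{\bm X}_{k+1})\le\langle\nabla f(\bm X_{k+1}),\bm X_{k+1}-\hat{\bm X}_{k+1}\rangle$, together with $\Vert\mathcal{C}(\bm E)-e^{\bm E}\Vert_F\le\Vert\bm E\Vert_F^3$ from Lemma~\ref{fact3}. The gradient at $\bm X_{k+1}$ is where $\Vert\bm\sigma_{k+1}\Vert$ and $\Vert\mathcal{A}^*(\bm b)\Vert$ in $c_k$ come from.
\end{itemize}

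Your intuition about $c_k$ is correct only in your ordering, where the Cayley correction is paired with $\nabla_{\bm E}f_{\bm\Omega_k}(\bm\sigma_{k+1},\bm 0,\bm 0)$. If you combine your exact artanh reparametrization with the reversed ordering, the correction is paired with $\nabla_{\bm E}^k$ instead. That gives a term of order $s_k\Vert\nabla_{\bm E}^k\Vert_F^2\Vert\bm E_k\Vert_F^2$, which is not of the form $\frac12c_ks_k\Vert\bm E_k\Vert_F^2$; for example, $c_k=0$ whenever $\bm\sigma_{k+1}=\bm 0$. The intermediate point is what allows the $\bm\sigma_k$-based $l_\sigma^k,l_\Omega^k$ and the $\bm\sigma_{k+1}$-based $c_k$ to coexist in one inequality.

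Your Step~3 caveats are legitimate: $c_k$ depends on $t_k$, and there is a $2\alpha$ versus $\alpha$ mismatch. The paper's proof of (i) simply reads off the two thresholds without addressing either.
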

\begin{proof}
Fix $k\in \N$  and define the map $\mathcal{C}:\mathcal{S}(n)\rightarrow\mathbb{R}^{n\times n}$ by
\begin{align}\label{mapC}\mathcal{C}(\bm{X}):=\left(\bm{I}+\frac{1}{2}\bm{X}\right)\left(\bm{I}-\frac{1}{2}\bm{X}\right)^{-1}\quad{\rm for \;each}\quad\bm{X}\in\mathcal{S}(n).\end{align}
Then it is clear that  $\bm{U}_{k+1}=\bm{U}_{k}\C(\bm{E}_{k})$. Moreover, write
$\hat{\bm{X}}_{k+1}:=(\bm{U}_{k}e^{\bm{E}_{k}})^\top \mathcal{D}(\bm{\sigma}_{k+1})\bm{V}_{k}e^{\bm{F}_{k}}$.
We first give the estimation of $\mathcal{F}(\bm{X}_{k+1})-\mathcal{F}(\hat{\bm{X}}_{k+1})$.  Note by the convexity of $\Vert \mathcal{A}(\cdot)-\bm{b}\Vert^{2}$ that
\begin{align}\label{convex}
\Vert \mathcal{A}(\bm{X}_{k+1})-\bm{b}\Vert^{2}-\Vert \mathcal{A}(\hat{\bm{X}}_{k+1})-\bm{b}\Vert^{2}
&\leq\langle\mathcal{A}^*(\mathcal{A}(\bm{X}_{k+1})-\bm{b}),\; \bm{X}_{k+1}-\hat{\bm{X}}_{k+1}\rangle\notag\\
&\leq(\|\mathcal{A}^*\mathcal{A}\|_{\rm{op}}\cdot\|\bm{X}_{k+1}\|_F+\|\mathcal{A}^*(\bm{b})\|)\|\bm{X}_{k+1}-\hat{\bm{X}}_{k+1}\|_F.
\end{align}
Moreover, by the definitions of $\bm{X}_{k+1}$ and $\hat{\bm{X}}_{k+1}$, one has
\begin{align}\label{convex2}
\bm{X}_{k+1}-\hat{\bm{X}}_{k+1}=(\bm{U}_{k}\C(\bm{E}_{k}))^\top\mathcal{D}(\bm{\sigma}_{k+1})\bm{V}_{k}(\C(\bm{F}_{k})-e^{\bm{F}_{k}})
+(\C(\bm{E}_{k})-e^{\bm{E}_{k}})^\top\bm{U}_{k}^\top\mathcal{D}(\bm{\sigma}_{k+1})\bm{V}_{k}e^{\bm{F}_{k}}.
\end{align}
Since $\|\bm{E}_{k}\|_F\le1$ and $\|\bm{F}_{k}\|_F\le1$ (noting the selection strategies of $t_k$, $s_k$ and the definitions of $\bm{E}_{k}$,  $\bm{F}_{k}$ in Algorithm \ref{alg1}), Lemma \ref{fact3} is applicable to concluding that
\begin{align}\label{factapp}
\|\mathcal{C}(\bm{E}_{k})-e^{\bm{E}_{k}}\|_F\le\|\bm{E}_{k}\|_F^3\quad{\rm and} \quad\|\mathcal{C}(\bm{F}_{k})-e^{\bm{F}_{k}}\|_F\le\|\bm{F}_{k}\|_F^3.
\end{align}
Then, thanks to the orthogonality of $\bm{U}_{k}$, $\bm{V}_{k}$, $e^{\bm{F}_{k}}$ and $\C(\bm{E}_{k})$, we deduce by \eqref{convex2} and \eqref{factapp} that
\begin{align}\label{convexes}
\|\bm{X}_{k+1}-\hat{\bm{X}}_{k+1}\|_F\leq\|\bm{\sigma}_{k+1}\|(\|\bm{E}_{k}\|_F^3+\|\bm{F}_{k}\|_F^3).
\end{align}
On the other hand, using again the definition of $\bm{X}_{k+1}$ and the orthogonality of $\bm{U}_{k+1}$, $\bm{V}_{k+1}$, we have
\begin{align}\label{xes}
\|\bm{X}_{k+1}\|_F=\|\bm{U}_{k+1}^\top \mathcal{D}(\bm{\sigma}_{k+1})\bm{V}_{k+1}\|_F=\|\bm{\sigma}_{k+1}\|.
\end{align}
Substituting \eqref{convexes} and \eqref{xes} into \eqref{convex}, we obtain
\begin{align}\label{ax}&\Vert \mathcal{A}(\bm{X}_{k+1})-\bm{b}\Vert^{2}-\Vert \mathcal{A}(\hat{\bm{X}}_{k+1})-\bm{b}\Vert^{2}\notag\\
&\leq(\|\mathcal{A}^*\mathcal{A}\|_{\rm{op}}\cdot\|\bm{\sigma}_{k+1}\|^2+\|\mathcal{A}^*(\bm{b})\|\cdot\|\bm{\sigma}_{k+1}\|)(\|\bm{E}_{k}\|_F^3+\|\bm{F}_{k}\|_F^3)\notag\\
&\leq(\|\mathcal{A}^*\mathcal{A}\|_{\rm{op}}\cdot\|\bm{\sigma}_{k+1}\|^2+\|\mathcal{A}^*(\bm{b})\|\cdot\|\bm{\sigma}_{k+1}\|)(\|\bm{E}_{k}\|_F+\|\bm{F}_{k}\|_F)(\|\bm{E}_{k}\|_F^2+\|\bm{F}_{k}\|_F^2).
\end{align}
Hence, thanks to the definitions of $\bm{E}_{k}$,  $\bm{F}_{k}$ and $c_k$, we derive from \eqref{ax} that
\begin{align}\label{fx+1fh}
\mathcal{F}(\bm{X}_{k+1})-\mathcal{F}(\hat{\bm{X}}_{k+1})
&=\frac12(\Vert \mathcal{A}(\bm{X}_{k+1})-\bm{b}\Vert^{2}-\Vert \mathcal{A}(\hat{\bm{X}}_{k+1})-\bm{b}\Vert^{2})\leq\frac{1}{2}c_k s_k(\|\bm{E}_{k}\|_F^2+\|\bm{F}_{k}\|_F^2).
\end{align}
Now we will estimate $\mathcal{F}(\hat{\bm{X}}_{k+1})-\mathcal{F}(\bm{X}_k)$. For this purpose, rewrite
\begin{align}\label{Fx+1}
\mathcal{F}(\hat{\bm{X}}_{k+1})&=f_{\bm{\Omega}_k}(\bm{\sigma}_{k+1},\bm{E}_{k},\bm{F}_{k})+\lambda\Vert \bm{\sigma}_{k+1}\Vert_{p}^p=R_L+R_{\bm{\sigma}}+R_{\bm{(E,F)}}+R_M,
\end{align}
where
\begin{align*}
R_L:=\ &\langle \nabla_{\bm{\sigma}} f_{\bm{\Omega}_k}(\bm{\sigma}_{k},\bm{E}_{k},\bm{F}_{k})-\nabla_{\bm{\sigma}} f_{\bm{\Omega}_k}(\bm{\sigma}_{k},\bm{0},\bm{0}),\bm{\sigma}_{k+1}-\bm{\sigma}_{k}\rangle,\\
R_{\bm{\sigma}}:=\ &f_{\bm{\Omega}_k}(\bm{\sigma}_{k+1},\bm{E}_{k},\bm{F}_{k})
-f_{\bm{\Omega}_k}(\bm{\sigma}_{k},\bm{E}_{k},\bm{F}_{k})-\langle \nabla_{\bm{\sigma}} f_{\bm{\Omega}_k}(\bm{\sigma}_{k},\bm{E}_{k},\bm{F}_{k}),\bm{\sigma}_{k+1}-\bm{\sigma}_{k}\rangle,\\
R_{\bm{(E,F)}}:=\ &f_{\bm{\Omega}_k}(\bm{\sigma}_{k},\bm{E}_{k},\bm{F}_{k})-f_{\bm{\Omega}_k}(\bm{\sigma}_{k},\bm{0},\bm{0})-\langle\nabla_{\bm{E}} f_{\bm{\Omega}_k}(\bm{\sigma}_k,\bm{0},\bm{0}),\bm{E}_{k}\rangle-\langle\nabla_{\bm{F}} f_{\bm{\Omega}_k}(\bm{\sigma}_k,\bm{0},\bm{0}),\bm{F}_{k}\rangle,\\
R_M:=\ &f_{\bm{\Omega}_k}(\bm{\sigma}_{k},\bm{0},\bm{0})+\lambda\Vert \bm{\sigma}_{k+1}\Vert_{p}^p+\langle \nabla_{\bm{\sigma}} f_{\bm{\Omega}_k}(\bm{\sigma}_{k},\bm{0},\bm{0}),\bm{\sigma}_{k+1}-\bm{\sigma}_{k}\rangle+\langle\nabla_{\bm{E}} f_{\bm{\Omega}_k}(\bm{\sigma}_k,\bm{0},\bm{0}),\bm{E}_{k}\rangle\\
&+\langle\nabla_{\bm{F}} f_{\bm{\Omega}_k}(\bm{\sigma}_k,\bm{0},\bm{0}),\bm{F}_{k}\rangle.
\end{align*}
Below we will  estimate $R_L$, $R_{\bm{\sigma}}$, $R_{\bm{(E,F)}}$ and $R_M$. We first note by Lemma \ref{prop2} (iv) and the definition of $R_L$ that
\begin{align}\label{RL}
R_L
&\leq (\| \nabla f(\bm{X}_k)\Vert_F+\Vert\mathcal{A}^*\mathcal{A}\Vert_{{\rm op}}\|\bm{\sigma}_k\|) (\Vert \bm{E}_{k}\Vert_{F} +\Vert \bm{F}_{k}\Vert_{F})\Vert\bm{\sigma}_{k+1}-\bm{\sigma}_{k}\Vert\notag\\
&\leq \frac12(\| \nabla f(\bm{X}_k)\Vert_F+\Vert\mathcal{A}^*\mathcal{A}\Vert_{{\rm op}}\|\bm{\sigma}_k\|)( \Vert \bm{E}_{k}\Vert_{F} ^2+\Vert \bm{F}_{k}\Vert_{F}^2+2\Vert\bm{\sigma}_{k+1}-\bm{\sigma}_{k}\Vert^2).
\end{align}
For the estimation of $R_{\bm{\sigma}}$ and $R_{\bm{(E,F)}}$, let $t\in[0,1]$. Then, we have by
Lemma \ref{prop2} (v) that
\begin{align*}
\big<\nabla_{\bm{\sigma}}f_{\bm{\Omega}_{k}}(\bm{\sigma}_{k}+t(\bm{\sigma}_{k+1}-\bm{\sigma}_{k}),\bm{E}_{k},\bm{F}_{k})-\nabla_{\bm{\sigma}}f_{\bm{\Omega}_{k}}(\bm{\sigma}_k,\bm{E}_{k},\bm{F}_{k}),\;\bm{\sigma}_{k+1}-\bm{\sigma}_{k}\big>
\leq t\Vert\mathcal{A}^*\mathcal{A}\Vert_{{\rm op}}\Vert \bm{\sigma}_{k+1}-\bm{\sigma}_k\Vert^2.
\end{align*}
Thus, thanks to Lemma \ref{lemma1} and the definition of $R_{\bm{\sigma}}$, one sees
\begin{align}\label{RS}R_{\bm{\sigma}}\leq\frac12\Vert\mathcal{A}^*\mathcal{A}\Vert_{{\rm op}}\Vert \bm{\sigma}_{k+1}-\bm{\sigma}_k\Vert^2.
\end{align}
Using Lemma \ref{prop2} (vi) and (vii), we deduce
for each $t\in[0,1]$,
\begin{align*}
&\langle\nabla_{\bm{E}}f_{\bm{\Omega}_k}(\bm{\sigma}_k,t\bm{E}_{k},t\bm{F}_{k})-\nabla_{\bm{E}} f_{\bm{\Omega}_k}(\bm{\sigma}_k,\bm{0},\bm{0}),\bm{E}_{k}\rangle\notag\\
\leq\ & t(\|\nabla f(\bm{X}_k)\|_F\|\bm{\sigma}_k\|+\|\mathcal{A}^*\mathcal{A}\|_{\rm{op}}\|\bm{\sigma}_k\|^2)
\left(\Vert \bm{E}_k\Vert_{F}^2+\Vert \bm{E}_k\Vert_{F}\Vert \bm{F}_k\Vert_{F}\right)
\end{align*}
and
\begin{align*}
&\langle\nabla_{\bm{F}}f_{\bm{\Omega}_k}(\bm{\sigma}_k,t\bm{E}_{k},t\bm{F}_{k})-\nabla_{\bm{F}}f_{\bm{\Omega}_k}(\bm{\sigma}_k,\bm{0},\bm{0}), \bm{F}_{k}\rangle\notag\\
\leq\ & t(\|\mathcal{A}^*\mathcal{A}\|_{\rm{op}}\cdot\|\bm{\sigma}_k\|^2+\|\nabla f(\bm{X}_k)\|_F\cdot\|\bm{\sigma}_k\|)
\left(\Vert \bm{F}_k\Vert_{F}^2+\Vert \bm{E}_k\Vert_{F}\Vert \bm{F}_k\Vert_{F}\right).
\end{align*}
Then, applying Lemma \ref{lemma1}  and using the definition of $R_{\bm{(E,F)}}$, one has
\begin{align}\label{REF}
R_{\bm{(E,F)}}\leq(\|\mathcal{A}^*\mathcal{A}\|_{\rm{op}}\cdot\|\bm{\sigma}_k\|^2+\|\nabla f(\bm{X}_k)\|_F\cdot\|\bm{\sigma}_k\|)
 (\Vert \bm{E}_{k}\Vert_F^2+\Vert \bm{F}_{k}\Vert_F^2).
\end{align}
As for the estimation of $R_M$, we note by \eqref{eq:sigma} that
$$\frac{1}{2t_{k}} \Vert \bm{\sigma}_{k+1}-\bm{\sigma}_{k}+t_{k}\nabla_{\bm{\sigma}} f_{\bm{\Omega}_k}(\bm{\sigma}_k,\bm{0},\bm{0})\Vert^{2} +\lambda\Vert \bm{\sigma}_{k+1}\Vert_{p}^p\le\frac{1}{2t_{k}} \Vert t_{k}\nabla_{\bm{\sigma}} f_{\bm{\Omega}_k}(\bm{\sigma}_k,\bm{0},\bm{0})\Vert^{2} +\lambda\Vert \bm{\sigma}_{k}\Vert_{p}^p,$$
which gives
\begin{align}\label{eqk0}
\langle \nabla _{\bm{\sigma}} f_{\bm{\Omega}_k}(\bm{\sigma}_{k},\bm{0},\bm{0}),\bm{\sigma}_{k+1}-\bm{\sigma}_{k}\rangle+\lambda\Vert \bm{\sigma}_{k+1}\Vert_{p}^p\le\lambda\Vert \bm{\sigma}_{k}\Vert_{p}^p
-\frac{1}{2t_{k}}\Vert \bm{\sigma}_{k+1}-\bm{\sigma}_k\Vert^{2}.
\end{align}
On the other hand, we recall from \eqref{ExpE} that
\begin{equation}\label{ekdef}\langle\nabla_{\bm{E}} f_{\bm{\Omega}_k}(\bm{\sigma}_k,\bm{0},\bm{0}),\bm{E}_{k}\rangle=-\frac{1}{s_{k}}\Vert\bm{E}_{k}\Vert_F^2\quad{\rm and}\quad\langle\nabla_{\bm{F}} f_{\bm{\Omega}_k}(\bm{\sigma}_k,\bm{0},\bm{0}),\bm{F}_{k}\rangle=-\frac{1}{s_{k}}\Vert\bm{F}_{k}\Vert_F^2.\end{equation}
Then, in view of the definition of $R_M$, we obtain  by \eqref{eqk0} and \eqref{ekdef} that
\begin{equation}\label{RM}R_M\leq \mathcal{F}(\bm{X}_k)-\frac{1}{2t_{k}}\Vert \bm{\sigma}_{k+1}-\bm{\sigma}_k\Vert_2^{2}-\frac{1}{s_k}(\Vert \bm{E}_{k}\Vert_{F}^{2}+\Vert \bm{F}_{k}\Vert_{F}^{2})\end{equation}
(noting that $f_{\bm{\Omega}_k}(\bm{\sigma}_{k},\bm{0},\bm{0})+\lambda\Vert \bm{\sigma}_{k}\Vert_{p}^p=\mathcal{F}(\bm{X}_k)$).
Substituting \eqref{RS}, \eqref{REF}, \eqref{RL} and \eqref{RM} into \eqref{Fx+1}, we can conclude
\begin{align*}
\mathcal{F}(\hat{\bm{X}}_{k+1})&\leq \mathcal{F}(\bm{X}_k)+\left(\frac{1}{2}\|\mathcal{A}^*\mathcal{A}\|_{\rm{op}}+\Vert\nabla f(\bm{X}_k) \Vert_{F}+\|\mathcal{A}^*\mathcal{A}\|_{\rm{op}}\Vert\bm{\sigma}_k\Vert-\frac{1}{2t_k}\right)\|\bm{\sigma}_{k+1}-\bm{\sigma}_{k}\|^2\notag\\
&\quad +\left(\|\mathcal{A}^*\mathcal{A}\|_{\rm{op}}\cdot\|\bm{\sigma}_k\|^2+(\frac12+\|\bm{\sigma}_k\|)\|\nabla f(\bm{X}_k)\|_F+\frac12\|\bm{\sigma}_k\|-\frac{1}{s_k}\right) (\Vert \bm{E}_{k}\Vert_F^2+\Vert \bm{F}_{k}\Vert_F^2).
\end{align*}
Thus, by the definitions of  $l_{\sigma}^k$ and $l_{\Omega}^k$, we have
\begin{align}\label{xhat}
&\mathcal{F}(\hat{\bm{X}}_{k+1})-\mathcal{F}(\bm{X}_k)
\leq \frac12\left(l_{\sigma}^k-\frac{1}{t_{k}}\right)\Vert \bm{\sigma}_{k+1}-\bm{\sigma}_k\Vert^{2}+\left(l_{\Omega}^k-\frac{1}{s_k}\right)(\Vert \bm{E}_{k}\Vert_{F}^{2}+\Vert \bm{F}_{k}\Vert_{F}^{2}).
\end{align}
Adding \eqref{xhat}  to \eqref{fx+1fh}, one checks
\begin{align}\label{fxk}
\mathcal{F}(\bm{X}_{k+1})-\mathcal{F}(\bm{X}_{k})
&\le\frac12\left(l_{\sigma}^k-\frac{1}{t_{k}}\right) \Vert\bm{\sigma}_{k+1}-\bm{\sigma}_k\Vert^{2}+\left(l_{\Omega}^k+\frac{1}{2}c_ks_k-\frac{1}{s_k}\right)(\Vert \bm{E}_{k}\Vert_{F}^{2}+\Vert \bm{F}_{k}\Vert_{F}^{2}).
\end{align}
Therefore, for a given $\alpha$, $\frac12\left(l_{\sigma}^k-\frac{1}{t_{k}}\right)\le-\alpha$ and $\left(l_{\Omega}^k+\frac{1}{2}c_ks_k-\frac{1}{s_k}\right)\le-\alpha$
are satisfied respectively for $t_{k}\le\frac{1}{2\alpha+l_{\sigma}^k}$ and $s_k\le\frac{2}{\alpha+l_{\Omega}^k+\sqrt{(\alpha+l_{\Omega}^k)^2+2c_k}}$.
Consequently, assertion (i) is seen to hold.
On the other hand, using \eqref{alphakdef} and \eqref{fxk}, one has
\begin{align*}\label{fxk2}
\mathcal{F}(\bm{X}_{k+1})-\mathcal{F}(\bm{X}_{k})\le-\alpha_k(\Vert\bm{\sigma}_{k+1}-\bm{\sigma}_k\Vert^{2}+\Vert \bm{E}_{k}\Vert_{F}^{2}+\Vert \bm{F}_{k}\Vert_{F}^{2}).
\end{align*}
Thus, assertion (ii) is also seen to hold by Lemma \ref{alphabound} and the fact that $\alpha=\inf\limits_{k\ge0}\alpha_k$. The proof is complete.
\end{proof}

\begin{remark}\label{rmk:cluster}
The following assertions follow directly from Proposition \ref{prop4}.
\begin{enumerate}[(i)]
  \item The sequence $\{(\bm{\sigma}_k,\bm{U}_k,\bm{V}_k)\}$ (resp. $\{\bm{X}_k\}$) is bounded; hence it has a cluster point which, in what follows, will be denoted by $(\bm{\bar{\sigma}},\bm{\bar{U}},\bm{\bar{V}})$ (resp. $\bm{\bar{X}}$).
  \item $\{\mathcal{F}(\bm{X}_{k})\}$ is monotonically decreasing and
  $ \lim\limits_{k\rightarrow\infty}\mathcal{F}(\bm{X}_{k})=\mathcal{F}(\bm{\bar{X}})$.
  \item $\sum\limits_{k=0}^{\infty}\Vert (\bm{\sigma}_{k+1}-\bm{\sigma}_{k},\bm{E}_{k},\bm{F}_{k})\Vert^{2}<+\infty$ and so $\lim\limits_{k\rightarrow\infty} (\bm{\sigma}_{k+1}-\bm{\sigma}_{k},\bm{E}_{k},\bm{F}_{k})=(\bm{0},\bm{0},\bm{0})$.
\end{enumerate}
\end{remark}

\begin{remark}\label{sktkbound}Let $ \{s_k\}$ and $ \{t_k\}$ be selected by Strategy (i) or (ii). We assert that there exist positive constants $\underline{s}$ and $\underline{t}$ such that
\begin{equation}\label{tklowbound}
s_k\ge\underline{s}\quad {\rm and}\quad t_k\ge\underline{t}\quad {\rm for\; each}\; k\in\N.
\end{equation}
In fact, we note  by Remark \ref{rmk:cluster} (i) and  \eqref{fXnabla} that
 $\{\bm{\sigma}_{k}\}$, $\{\bm{X}_k\}$ and $\{\nabla f(\bm{X}_k)\}$ are bounded. Moreover, by Proposition \ref{prop1} (iv), (v) and the definitions of $\{\nabla_{\bm{E}}^k\}$, $\{\nabla_{\bm{F}}^k\}$ in \eqref{nablaekfkdefi},
 one sees that $\{\|\nabla_{\bm{E}}^k\|_F\}$ and $\{\|\nabla_{\bm{F}}^k\|_F\}$ are bounded. Then, by definitions, one checks that $\{c_k\}$,  $\{l_{\sigma}^k\}$ and $\{l_{\Omega}^k\}$ are  bounded.  Thus, thanks to Proposition \ref{prop4} (i), the assertion \eqref{tklowbound}  holds when $ \{t_k\}$ and $ \{s_k\}$ are selected by Strategy (i).
On the other  hand, by \eqref{xfbound}, one has that $\{\bar{s}_k\}$  has a lower  bound and so, the assertion \eqref{tklowbound} is also seen to  hold if $ \{t_k\}$ and $ \{s_k\}$ are selected by Strategy (ii).
\end{remark}

\section{Convergence Analysis}\label{sec:convergence}
\subsection{K{\L} Property}
We begin this subsection with the notions of the Fr${\rm\acute{e}}$chet and limiting subdifferential. For this purpose, let  $h: \mathbb{R}^l\rightarrow \overline{\mathbb{R}}:=\mathbb{R}\cup\{+\infty\}$ be a proper  lower semi-continuous  function, the domain of which is defined by
${\rm dom}\ h:=\{x\in \mathbb{R}^l:h(x)<+\infty\}$. The Fr${\rm\acute{e}}$chet and limiting subdifferential of  $h$ at $\bm{x}\in \mathbb{R}^l$ (cf. \cite[Definition 8.3]{rockafellar2009variational}) are defined respectively by
 $$\hat{\partial} h(\bm{x}):=
 \left\{
      \begin {array}{lr}
      \left\{\bm{v}\in\mathbb{R}^{l}:\liminf\limits_{\bm{y}\neq\bm{x}, \bm{y}\rightarrow\bm{x}}
\frac{h(\bm{y})-h(\bm{x})-\langle \bm{v}, \bm{y}-\bm{x}\rangle}{\Vert \bm{y}-\bm{x}\Vert}\ge0\right\}, & x\in {\rm dom}\ h ,\\\\
      \emptyset,& x\notin{\rm dom}\ h,
      \end {array}
 \right.
 $$
and
$$\partial h(\bm{x}):=\{\bm{v}\in\mathbb{R}^l: \exists \bm{x}_k\rightarrow \bm{x}, h(\bm{x}_k)\rightarrow h(\bm{x}), \bm{v}_k\in\hat{\partial} h(\bm{x}_k)\rightarrow \bm{v} \}.$$
In particular, if $h$ is an analytic function, then $\hat{\partial} h(\cdot)=\partial h(\cdot)=\{\nabla h(\cdot)\}$.

In our convergence analysis, the K{\L} property (cf. \cite{Attouch2010, Attouch2013Convergence, li2018calculus}) defined in Definition \ref{defexp} plays a key role. The original and pioneering work of {\L}ojasiewicz \cite{Lojasiewicz1963} and Kurdyka \cite{Kurdyka1998}  on differentiable functions
laid the foundation of the K{\L}  property, which was later extended to nonsmooth functions by Bolte
et al. \cite{Bolte2007The, Bolte2007}.

\begin{definition}\label{defexp} Let $\bar{\bm{x}}\in\mathbb{R}^l$ and $\theta\in[0,1)$.   We say that the function $h$
 has the  K{\L} property at $\bar{\bm{x}}$ with the exponent $\theta$ if
 there exist a neighborhood $\mathcal{N}$ of $\bar{\bm{x}}$, $\kappa>0$ and $\epsilon\in(0, +\infty]$ such that, for any $\bm{x}\in \mathcal{N}$ with
$h(\bar{\bm{x}})<h(\bm{x})<h(\bar{\bm{x}})+\epsilon$, $$
\kappa\operatorname{d}(0, \partial h(\bm{x})) \geq (h(\bm{x})-h(\bar{\bm{x}}))^\theta.
$$
\end{definition}

To provide  a sufficient condition for proving the K{\L} property,  we also recall the notation of  a semianalytic function; see for instance \cite[Definition 2.1, Definition 2.3]{Bierstone} and \cite[Definition 2.1]{Bolte2007}.

\begin{definition}\label{defsubana}
(a) A subset $\mathcal{S}$ of $\mathbb{R}^l$ is said to be semi-analytic if for each  $\bm{z}\in\mathbb{R}^l$ there exists a neighborhood $\mathcal{N}$ of $\bm{z}$ and
real-analytic functions $g_{ij}$ and $h_{ij}$ ($1\le i \le p, 1\le j \le q$) such that
$$
\mathcal{S}\cap\mathcal{N}=\bigcup_{i=1}^p \bigcap_{j=1}^q\left\{\bm{x} \in \mathcal{N}: g_{i j}(\bm{x})=0 \text { and } h_{i j}(\bm{x})>0\right\}.
$$
(b) A function $h:\mathbb{R}^l \rightarrow \bar{\mathbb{R}}$ is said to be semi-analytic if its graph
$$
{\rm Graph}\ h:=\{(\bm{x},t)\in\mathbb{R}^l \times \mathbb{R}:h(\bm{x})=t\}
$$
is  semi-analytic.

\end{definition}

The following proposition is a direct consequence of \cite[Theorem 3.1, Remark 3.2]{Bolte2007The} (noting that a semi-analytic function is subanalytic (cf. \cite[Definition 2.1]{Bolte2007}) as remarked by \cite[Section 6.6]{Facchinei2007Finite}).

\begin{prop}\label{semilemma}Assume that $h:\mathbb{R}^l \rightarrow \bar{\mathbb{R}}$ is a semi-analytic function with a closed domain, and assume that $h|_{{\rm dom} h}$ is continuous. Then, for each $\bar{\bm{x}}\in\mathbb{R}^l$, there exists $\theta\in[0,1)$ such that $h$ has the K{\L} property at $\bar{\bm{x}}$ with the exponent $\theta$.
\end{prop}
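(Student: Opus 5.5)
The plan is to obtain Proposition \ref{semilemma} by verifying the hypotheses of \cite[Theorem 3.1, Remark 3.2]{Bolte2007The}, which establishes the K{\L} inequality for lower semicontinuous subanalytic functions whose restriction to a closed domain is continuous. There are three steps: (a) pass from semi-analyticity to subanalyticity, (b) check the regularity hypotheses that the cited theorem requires, and (c) read off the exponent form of the K{\L} property in Definition \ref{defexp}.

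For step (a), I use that a semi-analytic set is subanalytic: it is locally the image of itself under the identity, which is proper and analytic \cite[Section 6.6]{Facchinei2007Finite}. Hence ${\rm Graph}\ h$ is subanalytic, and so $h$ is a subanalytic function in the sense of \cite[Definition 2.1]{Bolte2007}.

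For step (b), I check lower semicontinuity, since this is what the cited theorem needs and is the only point that requires care. Take $\bm{x}_k\to\bm{x}$. If $\bm{x}\notin{\rm dom}\,h$, then, because the domain is closed, $\bm{x}_k\notin{\rm dom}\,h$ for all large $k$, so $h(\bm{x}_k)=+\infty$ and the liminf condition holds trivially. If $\bm{x}\in{\rm dom}\,h$, then continuity of $h|_{{\rm dom}\,h}$ gives $h(\bm{x}_k)\to h(\bm{x})$ along the terms lying in the domain. The remaining terms equal $+\infty$ and only increase the liminf. Thus $h$ is proper and lower semicontinuous with continuous restriction to its closed domain, which is exactly the setting of \cite[Theorem 3.1]{Bolte2007The}.

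For step (c), that theorem provides, at any $\bar{\bm{x}}\in{\rm dom}\,\partial h$, a concave desingularizing function. In the subanalytic case this function can be taken of the form $\varphi(s)=cs^{1-\theta}$ with $\theta\in[0,1)$, by the {\L}ojasiewicz-type exponent of \cite[Remark 3.2]{Bolte2007The}. Writing $\varphi'(h(\bm{x})-h(\bar{\bm{x}}))\,{\rm d}(0,\partial h(\bm{x}))\ge1$ in this form yields $\kappa\,{\rm d}(0,\partial h(\bm{x}))\ge(h(\bm{x})-h(\bar{\bm{x}}))^\theta$, which is the inequality in Definition \ref{defexp}.

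For points $\bar{\bm{x}}$ outside ${\rm dom}\,h$, the property holds vacuously. A neighborhood disjoint from the closed domain contains no $\bm{x}$ with $h(\bm{x})$ finite, so there is nothing to check.

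The main obstacle is the exponent form in step (c): extracting a power-type $\varphi$, uniform on a neighborhood, from the general subanalytic statement. If \cite[Remark 3.2]{Bolte2007The} does not directly give it, I would instead apply the {\L}ojasiewicz inequality to the subanalytic function $\bm{x}\mapsto{\rm d}(0,\partial h(\bm{x}))$ restricted to a level-set neighborhood.
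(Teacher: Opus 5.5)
Your proposal is correct and takes the same route as the paper, which simply notes that semi-analytic functions are subanalytic and invokes \cite[Theorem 3.1, Remark 3.2]{Bolte2007The}; your checks of lower semicontinuity and of points outside the closed domain are harmless added detail. The ``main obstacle'' you flag does not arise: at critical points the cited theorem is already a {\L}ojasiewicz inequality with a power exponent $\theta\in[0,1)$, and at non-critical points of ${\rm dom}\,h$, closedness of the limiting subdifferential along sequences in the domain (where $h$ is continuous) keeps ${\rm d}(0,\partial h(\bm{x}))$ bounded away from zero nearby, so $\theta=0$ works.
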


To prove the K{\L} property of $\mathcal{F}_{\bm{\bm{\Omega}}}$, we need to
present below the closed form of $\partial \mathcal{F}_{\bm{\bm{\Omega}}}$ which will also be used in the later convergence analysis. For this purpose, we write 
\begin{equation}\label{gammadef}\gamma_p:=\left\{
                                                                                             \begin{array}{ll}
                                                                                               1, & \hbox{$p=1$;} \\
                                                                                               +\infty, & \hbox{$p\in[0,1)$,}
                                                                                             \end{array}
                                                                                           \right.\end{equation}
and use
 ${\rm supp}(\bm{\sigma})$ to denote the support index set of $\bm{\sigma}\in\mathbb{R}^{n}$, i.e.,
${\rm supp}(\bm{\sigma}):= \{1\le i\le n |\;[\bm{\sigma}]_i\neq 0\}$.

\begin{prop}\label{lemma-par}We have the following assertions:
\begin{itemize}
 \item [{\rm (i)}]For any  $\bm{\sigma}\in\mathbb{R}^{n}$, it holds that $$\partial \Vert \bm{\sigma} \Vert_{0}^0=\left\{\bm{v}:[\bm{v}]_i= 0\; {\rm if}\; i\in{\rm supp}(\bm{\sigma})\right\}$$ and that, for $0<p\le 1$,
$$\partial \Vert \bm{\sigma} \Vert_{p}^p
= \left\{p\bm{v}:[\bm{v}]_i= {\rm sign}([\bm{\sigma}]_i)|[\bm{\sigma}]_i|^{p-1}\; {\rm if}\; i\in{\rm supp}(\bm{\sigma}),\;  |[\bm{v}]_i|\le\gamma_p\; {\rm if}\;  i\notin{\rm supp}(\bm{\sigma})\right\}.
$$

   \item [{\rm (ii)}] If $\bm{X}\in\mathbb{R}^{m\times n}$ admits the singular value decomposition $\bm{X}=\bm{U}^\top\mathcal{D}(\bm{\sigma})\bm{V}$, then
   \begin{equation*}
\partial \Vert \bm{X} \Vert_{S_p}^p
=\left\{\bm{U}^\top \left(
                      \begin{array}{cc}
                        \mathcal{D}(\partial\Vert \bm{\sigma}_{+}(\bm{X})\Vert_{p}^p) & \bm{0} \\
                        \bm{0}  & \bm W^{(22)} \\
                      \end{array}
                    \right)
\bm{V}:\bm W^{(22)}\in\mathbb{R}^{(m-r)\times(n-r)},\;\|\bm W^{(22)}\|_2\le\gamma_p\right\},
\end{equation*}
where $\partial\Vert \bm{\sigma}_{+}(\bm{X})\Vert_p^p $ is the limiting subdifferential of $\Vert\cdot\Vert_p^p$ at $\bm{\sigma}_{+}(\bm{X})$ which is the vector of the positive singular values of $\bm{X}$.
  \item [{\rm (iii)}]$\partial \mathcal{F}_{\bm{\bm{\Omega}}}(\cdot,\;\cdot,\;\cdot)=\nabla f_{\bm{\bm{\Omega}}}(\cdot,\;\cdot,\;\cdot)+\lambda\partial\Vert \cdot\Vert_{p}^p$ \; for\; each \; $\bm{\Omega}:=(\bm{U},\bm{V})\in\mathcal{O}(m)\times\mathcal{O}(n)$;
  \item [{\rm (iv)}]$\partial \mathcal{F}(\cdot)=\nabla f(\cdot)+\lambda\partial \Vert \cdot \Vert_{S_p}^p$.

\end{itemize}

\end{prop}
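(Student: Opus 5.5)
The four assertions are mostly standard, so I would assemble known results rather than derive them from scratch, doing (i) first since (ii) and (iii) both rely on it.

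\textbf{Assertion (i).} Because $\Vert\cdot\Vert_p^p$ is separable, $\Vert\bm{\sigma}\Vert_p^p=\sum_i|[\bm{\sigma}]_i|^p$ (and likewise for $\Vert\cdot\Vert_0^0$), the limiting subdifferential of the sum is the product of the scalar limiting subdifferentials \cite[Proposition 10.5]{rockafellar2009variational}. So it suffices to compute $\partial\phi(t)$ for the scalar function $\phi(t)=|t|^p$ (or $\phi(t)=|t|^0:=\mathbf{1}_{t\neq0}$).
\begin{itemize}
\item For $t\neq0$, $\phi$ is smooth near $t$, giving the derivative $p\,{\rm sign}(t)|t|^{p-1}$, and $0$ when $p=0$.
\item At $t=0$ with $p=1$, one gets the convex subdifferential $[-1,1]$.
\item At $t=0$ with $p\in(0,1)$, the quotient $|y|^p/|y|\to\infty$, so $\hat\partial\phi(0)=\mathbb{R}$.
\item At $t=0$ with $p=0$, $\phi(y)-\phi(0)\ge0$, so again $\hat\partial\phi(0)=\mathbb{R}$.
\end{itemize}
In every case the limiting subdifferential at $0$ contains the Fréchet one. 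Matching this to $\gamma_p$ in \eqref{gammadef} gives (i); for $p\in(0,1)$ the factor $p$ in front is harmless because $|[\bm{v}]_i|\le+\infty$.

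\textbf{Assertion (ii).} $\Vert\cdot\Vert_{S_p}^p=g\circ\bm{\sigma}$, where $g=\Vert\cdot\Vert_p^p$ is absolutely symmetric, i.e., invariant under permutations and sign changes. I would invoke Lewis–Sendov's formula for the limiting subdifferential of spectral (singular-value) functions:
\[
\partial(g\circ\bm{\sigma})(\bm{X})=\{\bm{U}^\top\mathcal{D}(\bm{v})\bm{V}:\ \bm{v}\in\partial g(\bm{\sigma}(\bm{X})),\ (\bm{U},\bm{V})\in\Xi(\bm{X})\}.
\]
The main obstacle is converting the freedom in the choice of $(\bm{U},\bm{V})\in\Xi(\bm{X})$ into a \emph{fixed} SVD together with a block $\bm W^{(22)}$.
\begin{itemize}
\item On the positive singular values, $\partial g$ is single-valued by (i), and it is constant on blocks of equal singular values. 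The freedom in the SVD on these blocks is a simultaneous orthogonal rotation of the left and right singular vectors, which commutes with a scalar multiple of the identity, so this part does not depend on the chosen SVD.
\item On the zero singular values, the singular vectors can be changed by arbitrary orthogonal $\bm{P}\in\mathcal{O}(m-r)$ and $\bm{Q}\in\mathcal{O}(n-r)$. The corresponding coordinates of $\bm v$ range over the box $|[\bm{v}]_i|\le\gamma_p$. The set $\{\bm{P}^\top\mathcal{D}(\bm{w})\bm{Q}:\Vert\bm w\Vert_\infty\le\gamma_p\}$ is exactly the spectral-norm ball of radius $\gamma_p$, by the SVD of the block itself.
\end{itemize}
This gives the stated form.

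\textbf{Assertions (iii) and (iv).} Both follow from the smooth-plus-nonsmooth sum rule $\partial(f+h)=\nabla f+\partial h$, valid for $f$ continuously differentiable \cite[Exercise 8.8]{rockafellar2009variational}.
\begin{itemize}
\item For (iii), take $f_{\bm\Omega}$ analytic and $h(\bm\sigma,\bm E,\bm F)=\lambda\Vert\bm\sigma\Vert_p^p$. This $h$ is independent of $(\bm E,\bm F)$, so its subdifferential is $\lambda\partial\Vert\bm\sigma\Vert_p^p\times\{\bm0\}\times\{\bm0\}$ (product rule again).
\item For (iv), take $f$ as in \eqref{fXdef}, whose gradient is \eqref{fXnabla}.
\end{itemize}
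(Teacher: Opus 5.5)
Your proposal is correct. For (iii) and (iv) you argue exactly as the paper does, via the smooth-plus-nonsmooth sum rule. For (i) you make explicit what the paper only calls ``by definitions'' plus literature pointers: separability (Rockafellar--Wets, Proposition 10.5) and the scalar computations of $\hat\partial$ and $\partial$ of $|t|^p$ and of $\mathbf{1}_{t\neq0}$.

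The genuine difference is in (ii).
\begin{itemize}
\item The paper first asserts that $\hat\partial\Vert\cdot\Vert_{S_p}^p=\partial\Vert\cdot\Vert_{S_p}^p$, then quotes a ready-made formula from Lemma 1 of Giampouras et al.
\item You apply the Lewis--Sendov formula for the limiting subdifferential of an absolutely symmetric function of singular values. You then carry out yourself the reduction from the union over all $(\bm U,\bm V)\in\Xi(\bm X)$ to a single fixed SVD, in two steps.
\item First, the positive-singular-value blocks are invariant under the simultaneous rotations $Q_j$, because $\partial g$ is single-valued and constant on each block of equal singular values.
\item Second, the independent rotations $P\in\mathcal{O}(m-r)$, $Q\in\mathcal{O}(n-r)$ on the zero block, applied to $\mathcal{D}(\bm w)$ with $\Vert\bm w\Vert_\infty\le\gamma_p$, sweep out exactly the spectral-norm ball of radius $\gamma_p$.
\end{itemize}

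Your route is a little longer but has three advantages.
\begin{itemize}
\item It treats $p=0$, $p\in(0,1)$ and $p=1$ uniformly.
\item It works directly with the limiting subdifferential. So you never need the regularity claim $\hat\partial=\partial$ at the matrix level, which the paper takes as evident but which is most naturally justified by a spectral argument of precisely this type.
\item It makes visible the passage from an SVD-dependent description to the fixed-SVD block form that is used later in Proposition \ref{DT-v-w} and Lemma \ref{lemma-supp}.
\end{itemize}
The paper's route is shorter but rests entirely on the external lemma.
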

\begin{proof}By definitions, one checks that assertion (i) holds  (see e.g., \cite[Theorem 1]{Le2013} for $p=0$,  \cite[P24]{meng} for $p=1$ and  \cite[P328]{Ma2011} for $0<p<1$). For the proof of assertion (ii), we first note by the definitions  that  $\hat{\partial}  \Vert\cdot \Vert_{S_p}^p=\partial \Vert\cdot\Vert_{S_p}^p$. Then, in view of  \cite[Lemma 1]{giampouras2020novel}, one sees that assertion (ii) holds.
Finally, since $f_{\bm{\bm{\Omega}}}$ and $f$ are analytic functions, assertions (iii) and (iv) follow from the calculus rules of subgradients \cite[Exercise 8.8]{rockafellar2009variational}.
Thus, the proof is completed.
\end{proof}

For the remainder of this subsection, let $\hat{\bm{\Omega}}:=(\bm{\hat{U}},\bm{\hat{V}})\in\mathcal{O}(m)\times\mathcal{O}(n)$. Now we can show the following proposition that illustrates the K{\L} property of $\mathcal{F}_{\hat{\bm{\Omega}}}$.

\begin{prop}\label{lemma2} Let $\hat{\bm{\sigma}}\in\mathbb{R}^{n}$.
Then $\mathcal{F}_{\hat{\bm{\Omega}}}$ has the   K{\L} property at $(\hat{\bm{\sigma}},{\bm 0},{\bm 0})$ with some  exponent $\theta\in [0,1)$, that is,
there exist $\kappa>0$ and $\delta_1>0$ such that for all $(\bm{\sigma},\bm{E},\bm{F})\in{\bf{B}}((\hat{\bm{\sigma}},{\bm 0},{\bm 0}),\delta_1)\subseteq\mathcal{M}$ with $\mathcal{F}_{\hat{\bm{\Omega}}}(\hat{\bm{\sigma}},{\bm 0},{\bm 0})<\mathcal{F}_{\hat{\bm{\Omega}}}(\bm{\sigma}, \bm{E}, \bm{F})<\mathcal{F}_{\hat{\bm{\Omega}}}(\hat{\bm{\sigma}},{\bm 0},{\bm 0})+\epsilon$, the following inequality holds:
\begin{equation}\label{KLineq}
\kappa{\rm d}(0,\partial \mathcal{F}_{\hat{\bm{\Omega}}}(\bm{\sigma},\bm{E},\bm{F}))\geq ( \mathcal{F}_{\hat{\bm{\Omega}}}(\bm{\sigma},\bm{E},\bm{F})-\mathcal{F}_{\hat{\bm{\Omega}}}(\hat{\bm{\sigma}},{\bm 0},{\bm 0}))^{\theta}.
\end{equation}
\end{prop}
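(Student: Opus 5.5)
The plan is to derive the statement from Proposition \ref{semilemma}, applied to $h:=\mathcal{F}_{\hat{\bm{\Omega}}}$, viewed as a function on the Euclidean space $\mathcal{M}\cong\mathbb{R}^{n+m(m-1)/2+n(n-1)/2}$ by parametrizing skew-symmetric matrices through their strictly upper triangular entries. The hypotheses are easy to check. The function $\mathcal{F}_{\hat{\bm{\Omega}}}$ is real-valued everywhere, so its domain is all of $\mathcal{M}$ and hence closed. For $p\in(0,1]$ it is continuous, being the sum of the analytic function $f_{\hat{\bm{\Omega}}}$ and the continuous function $\lambda\|\bm{\sigma}\|_p^p$.

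\textbf{Case $p=0$.} Here $\|\cdot\|_0^0$ is only lower semicontinuous, so the continuity hypothesis of Proposition \ref{semilemma} fails. I would handle this by inspecting the proof route of \cite[Theorem 3.1]{Bolte2007The}. That theorem in fact only requires $h$ to be subanalytic and continuous on its domain in the weaker sense used there. Alternatively, I would invoke the known result that proper lsc subanalytic functions satisfy the K{\L} property; this is the version in Bolte, Daniilidis, Lewis, and Attouch et al.\ \cite{Attouch2010}. I expect this to be the main obstacle, and I would resolve it by citing the lsc subanalytic version. A second option is a direct local argument near $(\hat{\bm{\sigma}},\bm{0},\bm{0})$. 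On a small ball, $\|\bm{\sigma}\|_0\ge\|\hat{\bm{\sigma}}\|_0$, so any point whose value lies in $(\mathcal{F}_{\hat{\bm{\Omega}}}(\hat{\bm{\sigma}},\bm 0,\bm 0),\mathcal{F}_{\hat{\bm{\Omega}}}(\hat{\bm{\sigma}},\bm 0,\bm 0)+\epsilon)$ with $\epsilon<\lambda$ must have the same support. One can then reduce to the analytic restriction of $f_{\hat{\bm{\Omega}}}$ to the coordinate subspace and apply the classical {\L}ojasiewicz inequality there, using Proposition \ref{lemma-par}(i),(iii) to identify ${\rm d}(0,\partial\mathcal{F}_{\hat{\bm{\Omega}}})$.

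\textbf{Semi-analyticity.} The key step is to show that $\mathcal{F}_{\hat{\bm{\Omega}}}$ is semi-analytic. Write $\mathcal{F}_{\hat{\bm{\Omega}}}=f_{\hat{\bm{\Omega}}}+\lambda g$ with $g(\bm{\sigma},\bm{E},\bm{F})=\|\bm{\sigma}\|_p^p=\sum_i|\sigma_i|^p$ (or $\sum_i\mathbf 1_{\sigma_i\neq0}$ when $p=0$). Fix a point $\bm z$. Partition a neighborhood $\mathcal{N}$ of $\bm z$ into finitely many sign-pattern cells, in which each $\sigma_i$ satisfies one of $\sigma_i>0$, $-\sigma_i>0$, or $\sigma_i=0$. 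On each cell, $g$ equals an analytic expression $\sum_{i\in I_+}\sigma_i^p+\sum_{i\in I_-}(-\sigma_i)^p$, which is analytic where the relevant $\pm\sigma_i>0$; for $p=0$ it is the constant $|I_+\cup I_-|$. Hence the graph of $\mathcal{F}_{\hat{\bm{\Omega}}}$ intersected with $\mathcal{N}\times\mathbb{R}$ is the finite union, over sign patterns, of sets of the form $\{t-f_{\hat{\bm{\Omega}}}-\lambda g_{\text{cell}}=0,\ \sigma_i=0\ (i\in I_0),\ \pm\sigma_i>0\ (i\in I_\pm)\}$.

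There is one technical wrinkle: for $p\in(0,1)$, the term $\sigma_i^p$ is not analytic at $\sigma_i=0$, so it is not analytic on a full neighborhood. I would handle this by working near points with $z_i=0$. There I would shrink the neighborhood and use the fact that in Definition \ref{defsubana} the defining functions need only be analytic on $\mathcal{N}$, replacing the condition via the analytic change $\sigma_i=\pm s_i^{1/p}$ when $p$ is rational. Alternatively, and more cleanly, I would note that $|\cdot|^p$ is subanalytic (it is globally subanalytic) and that the sum of a subanalytic continuous function with an analytic one is subanalytic; then I would apply \cite[Theorem 3.1]{Bolte2007The} directly for subanalytic functions, which is what Proposition \ref{semilemma} actually rests on. This yields the K{\L} property at $(\hat{\bm{\sigma}},\bm0,\bm0)$ with some $\theta\in[0,1)$, $\kappa>0$, $\epsilon>0$ and a ball of radius $\delta_1$, which is the claimed form.
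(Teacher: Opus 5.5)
Your $p=1$ argument matches the paper's. For $p=0$ you are right that Proposition~\ref{semilemma} does not literally apply, because $\|\cdot\|_0^0$ is discontinuous on its domain; the paper passes over this with ``similarly''. Your support-stabilization argument fixes this, provided the ball is also small enough that $|f_{\hat{\bm{\Omega}}}-f_{\hat{\bm{\Omega}}}(\hat{\bm{\sigma}},\bm{0},\bm{0})|<\lambda-\epsilon$ on it.

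The genuine gap is the case $p\in(0,1)$. Both of your routes there require $|\cdot|^p$, and hence $\mathcal{F}_{\hat{\bm{\Omega}}}$, to be subanalytic, and this is false for irrational $p$. A bounded subanalytic function of one variable has a convergent Puiseux expansion $\sum_k a_k s^{k/q}$ near $0^+$, and $s^p$ cannot have one. Restrict $\mathcal{F}_{\hat{\bm{\Omega}}}$ to the line through $(\hat{\bm{\sigma}},\bm{0},\bm{0})$ in a coordinate $j$ with $[\hat{\bm{\sigma}}]_j=0$ and subtract the analytic part. This shows that $\mathcal{F}_{\hat{\bm{\Omega}}}$ itself is not subanalytic near $(\hat{\bm{\sigma}},\bm{0},\bm{0})$ whenever $\hat{\bm{\sigma}}$ has a zero entry. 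So neither Proposition~\ref{semilemma} nor \cite[Theorem 3.1]{Bolte2007The} is available. The substitution $\sigma_i=\pm s_i^{1/p}$ does not help either: it is not analytic at $0$ unless $1/p$ is an integer. For rational $p=a/b$ what is true is that the graph of $|\cdot|^p$ is semi-algebraic ($u^b=|\sigma|^a$, $u\ge 0$), which says nothing about irrational exponents.

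The missing idea is the reduction you already use for $p=0$. This is exactly how the paper handles $p\in(0,1)$, and it needs no subanalyticity of $|\cdot|^p$. Order the coordinates so that $[\hat{\bm{\sigma}}]_i\neq0$ iff $i\le r$.

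On the slice $\{[\bm{\sigma}]_j=0,\ j>r\}$, the restriction of $\mathcal{F}_{\hat{\bm{\Omega}}}$ is analytic near $(\hat{\bm{\sigma}},\bm{0},\bm{0})$. Since $\partial|\cdot|^p(0)=\mathbb{R}$ for $p<1$ (Proposition~\ref{lemma-par}(i) with $\gamma_p=+\infty$), ${\rm d}(0,\partial\mathcal{F}_{\hat{\bm{\Omega}}})$ there equals the gradient norm of the restriction. The classical {\L}ojasiewicz gradient inequality then yields \eqref{KLineq} on the slice.

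Off the slice, some $[\bm{\sigma}]_j\neq0$ with $j>r$ and $|[\bm{\sigma}]_j|<\delta$. Hence ${\rm d}(0,\partial\mathcal{F}_{\hat{\bm{\Omega}}})\ge\lambda p|[\bm{\sigma}]_j|^{p-1}-\|\nabla_{\bm{\sigma}}f_{\hat{\bm{\Omega}}}\|$ is bounded below by a positive constant for small $\delta$. Meanwhile $(\mathcal{F}_{\hat{\bm{\Omega}}}-\mathcal{F}_{\hat{\bm{\Omega}}}(\hat{\bm{\sigma}},\bm{0},\bm{0}))^{\theta}$ stays bounded. Enlarging $\kappa$, with $\theta$ fixed from the slice, gives \eqref{KLineq} there too.

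If you prefer a citation route, the right tool is the K{\L} property for lower semicontinuous functions definable in an o-minimal structure (Bolte--Daniilidis--Lewis--Shiota). Apply it to a local restriction, using the structure $\mathbb{R}_{\rm an}^{\mathbb{R}}$ of restricted analytic functions with real power functions. ``Globally subanalytic'' is the wrong class once $p$ is irrational.
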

\begin{proof}Note that in the case when $p=1$,  $\mathcal{F}_{\hat{\bm{\Omega}}}$ is semi-analytic because
\begin{align}
{\rm Graph}\ \mathcal{F}_{\hat{\bm{\Omega}}}=\bigcup_{(s_i)\in\{-1,0,1\}^n}\{(\bm{\sigma},\bm{E},\bm{F},t)\in\mathcal{M}\times\mathbb{R}:\;f_{\hat{\bm{\Omega}}}(\bm{\sigma},\bm{E},\bm{F})+\lambda\sum_{i=1}^ns_i[\bm{\sigma}]_i=t,\;s_i[\bm{\sigma}]_i\ge0\},\notag\end{align}
and so the assertion holds by Definition \ref{defsubana} and Proposition \ref{semilemma}. Similarly, by definitions, we can also show that the assertion holds for the case when $p=0$ as, in this case,
$$\begin{array}{lll}&{\rm Graph}\ \mathcal{F}_{\hat{\bm{\Omega}}}
=\bigcup\limits_{k=1}^n\bigcup\limits_{1\le i_1<\cdots<i_k\le n}\{(\bm{\sigma},\bm{E},\bm{F},t)\in\mathcal{M}\times\mathbb{R}:\;f_{\hat{\bm{\Omega}}}(\bm{\sigma},\bm{E},\bm{F})+\lambda k=t,\\
&\qquad\qquad\qquad[\bm{\sigma}]_{i_1}^2>0, \ldots,[\bm{\sigma}]_{i_k}^2>0,\; [\bm{\sigma}]_{j}=0, j\notin\{i_1,\ldots,i_k\}\}\end{array}$$

Below we only need to consider the case when $p\in(0,\;1)$. For this purpose, we assume without loss of generality that $[\hat{\bm{\sigma}}]_i\neq0$ for each $1\le i\le r.$ Write  $\mathfrak{A}:=(R_{\neq}^{r}\times{\bm 0})\times\mathcal{S}(m)\times \mathcal{S}(n)$ where $R_{\neq}^{r}\times{\bm 0}:=\{(\bm{x}_{1},\bm{x}_{2},\ldots,\bm{x}_{r},0,\ldots,0)^\top \in \mathbb{R}^{n}\mid \bm{x}_{i} \neq0,\; 1\le i\le r\}$. Let $\tilde{\mathcal{F}}_{\hat{\bm{\Omega}}}$
be the restriction of $\mathcal{F}_{\hat{\bm{\Omega}}}$ on $\mathfrak{A}$.
Then  $\tilde{\mathcal{F}}_{\hat{\bm{\Omega}}}$ is analytic in a neighborhood of $(\hat{\bm{\sigma}},{\bm 0},{\bm 0})$ and so is semi-analytic.
 Therefore, thanks to Proposition \ref{semilemma},  there exist $\kappa>0$, $\theta\in[0,1)$, $\delta>0$ and $\epsilon\in(0, +\infty]$ such that \eqref{KLineq} holds for all $(\bm{\sigma},\bm{E},\bm{F})\in{\bf{B}}((\hat{\bm{\sigma}},{\bm 0},{\bm 0}),\delta)\subseteq\mathfrak{A}$ with $\mathcal{F}_{\hat{\bm{\Omega}}}(\hat{\bm{\sigma}},\bm{0},\bm{0}))<\mathcal{F}_{\hat{\bm{\Omega}}}(\bm{\sigma},\bm{E},\bm{F}))<\mathcal{F}_{\hat{\bm{\Omega}}}(\hat{\bm{\sigma}},\bm{0},\bm{0})) +\epsilon$.

 Below we will prove that there exist $\kappa>0$, $\theta\in[0,1)$ and $\delta>0$  such that \eqref{KLineq}
holds for all $(\bm{\sigma},\bm{E},\bm{F})\in{\bf{B}}((\hat{\bm{\sigma}},{\bm 0},{\bm 0}),\delta)\subseteq \mathcal{M}$ with $[\bm{\sigma}]_j\neq0$ for some $ r+1\le j\le n$.  In fact, by the  continuity of $\mathcal{F}_{\hat{\bm{\Omega}}}$, there exist $\delta_1>0$ and $\kappa>0$ such that for all $(\bm{\sigma},\bm{E},\bm{F})\in{\bf{B}}((\hat{\bm{\sigma}},{\bm 0},{\bm 0}),\delta_1) \subseteq \mathcal{M}$,
\begin{equation}\label{delta1}
\vert \mathcal{F}_{\hat{\bm{\Omega}}}(\bm{\sigma},\bm{E},\bm{F})-\mathcal{F}_{\hat{\bm{\Omega}}}(\hat{\bm{\sigma}},{\bm 0},{\bm 0})\vert^{\theta}\leq \kappa (\Vert\mathcal{A}^* \mathcal{A}\Vert_{{\rm op}}(1+\Vert\hat{\bm{\sigma}}\Vert)+\Vert\mathcal{A}^*(\bm{b})\Vert_F).
 \end{equation}
Set
$$\delta:=\min\left\{1,\;\delta_1,\;\left(\frac{2(\Vert\mathcal{A}^* \mathcal{A}\Vert_{{\rm op}}(1+\Vert\hat{\bm{\sigma}}\Vert)+\Vert\mathcal{A}^*(\bm{b})\Vert_F)}{\lambda p}\right)^{\frac{1}{p-1}}\right\}$$
 and let  $(\bm{\sigma},\bm{E},\bm{F})\in{\bf{B}}((\hat{\bm{\sigma}},0,0),\delta)\subseteq \mathcal{M}$ with  $[\bm{\sigma}]_j\neq0$ for some $r+1\le j\le n$.  Then, \eqref{delta1} follows subsequently as  $\delta\leq\delta_1$. On the other hand, thanks to Proposition \ref{lemma-par}, we deduce
\begin{align}\label{surj-key2}{\rm d}(0,\partial \mathcal{F}_{\hat{\bm{\Omega}}}(\bm{\sigma},\bm{E},\bm{F}))
&\ge|[\nabla_{\bm{\sigma}} f_{\hat{\bm{\Omega}}}(\bm{\sigma},\bm{E},\bm{F})]_j
+\lambda p{\rm sign}([\bm{\sigma}]_j)[\bm{\sigma}]_j^{p-1}|\ge\lambda p|[\bm{\sigma}]_j|^{p-1}-\Vert\nabla_{\bm{\sigma}} f_{\hat{\bm{\Omega}}}(\bm{\sigma},\bm{E},\bm{F})\Vert_F.\end{align}
Since $ r+1\le j\le n$ (i.e., $[\hat{\bm{\sigma}}]_j=0$), one has by the definition of $\delta$ that
\begin{align}\label{surj-key4}
\vert[\bm{\sigma}]_j\vert\le\Vert\bm{\sigma}-\hat{\bm{\sigma}}\Vert\le\delta\le\left(\frac{2(\Vert\mathcal{A}^* \mathcal{A}\Vert_{{\rm op}}(1+\Vert\hat{\bm{\sigma}}\Vert)+\Vert\mathcal{A}^*(\bm{b})\Vert_F)}{\lambda p}\right)^{\frac{1}{p-1}}.
\end{align}
Moreover, by Proposition \ref{prop1} (i), we have
\begin{align}\label{surj-key3}\Vert\nabla_{\bm{\sigma}}f_{\hat{\bm{\Omega}}}(\bm{\sigma},\bm{E},\bm{F})\Vert
&\leq\big \Vert\left(\hat{\bm{U}}e^{\bm{E}} \left(\mathcal{A}^*\mathcal{A}\left((\hat{\bm{U}}e^{\bm E})^\top \mathcal{D}(\bm{\sigma})\hat{\bm{V}}e^{\bm F}\right)-\mathcal{A}^*(\bm{b})\right)(\hat{\bm{V}}e^{\bm{F}})^\top\right)\big\Vert_F\notag\\
&\leq \Vert\mathcal{A}^* \mathcal{A}\Vert_{{\rm op}}(1+\Vert\hat{\bm{\sigma}}\Vert)+\Vert\mathcal{A}^*(\bm{b})\Vert_F,
\end{align}
where the last inequality holds because, by the assumption and the definition of $\delta$, $\Vert\bm{\sigma}\Vert\le\Vert\bm{\sigma}-\hat{\bm{\sigma}}\Vert+\Vert\hat{\bm{\sigma}}\Vert\le1+\Vert\hat{\bm{\sigma}}\Vert.$
Substituting \eqref{surj-key4}, \eqref{surj-key3} into \eqref{surj-key2} and using \eqref{delta1}, one checks that
\eqref{KLineq} holds  and so the proof is complete.
\end{proof}

The following lemma gives an important property of $\partial F_{\hat{\bm{\Omega}}}$ which will be used  in the subsequent convergence analysis.
\begin{lemma}\label{prop1add}For any $\delta_1\in(0,1)$, there exists $\eta>0$ such that for all $(\bm{\sigma},\bm{E},\bm{F})\in\mathcal{M}$ satisfying $\Vert\bm{E}\Vert_F\le\delta_1$ and $\Vert\bm{F}\Vert_F\le\delta_1$, the following inequality holds:
\begin{equation*}\label{subdiffcomp}
{\rm d}(\bm{0},\partial \mathcal{F}_{\hat{\bm{\Omega}}}(\bm{\sigma},\bm{E},\bm{F}))\le\eta{\rm d}(\bm{0},\partial \mathcal{F}_{\bm{{\Omega}}}(\bm{\sigma},\bm{0},\bm{0})),
\end{equation*}
where $\bm{\Omega}:=(\hat{\bm{U}}e^{\bm{E}},\hat{\bm{V}}e^{\bm{F}})$.
\end{lemma}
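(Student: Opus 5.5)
The plan is to compare the two limiting subdifferentials blockwise, using Proposition \ref{lemma-par} (iii). Both functions evaluate the same matrix $\bm{Y}:=(\hat{\bm{U}}e^{\bm{E}})^\top\mathcal{D}(\bm{\sigma})\hat{\bm{V}}e^{\bm{F}}$. Indeed, with $\bm{\Omega}=(\hat{\bm{U}}e^{\bm{E}},\hat{\bm{V}}e^{\bm{F}})$ we have $f_{\hat{\bm{\Omega}}}(\bm{\sigma},\bm{E},\bm{F})=f_{\bm{\Omega}}(\bm{\sigma},\bm{0},\bm{0})=f(\bm{Y})$.

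By Proposition \ref{lemma-par} (iii), $\partial\mathcal{F}_{\hat{\bm{\Omega}}}(\bm{\sigma},\bm{E},\bm{F})$ is the product of the set $\nabla_{\bm{\sigma}}f_{\hat{\bm{\Omega}}}(\bm{\sigma},\bm{E},\bm{F})+\lambda\partial\Vert\bm{\sigma}\Vert_p^p$ with the singletons $\{\nabla_{\bm{E}}f_{\hat{\bm{\Omega}}}(\bm{\sigma},\bm{E},\bm{F})\}$ and $\{\nabla_{\bm{F}}f_{\hat{\bm{\Omega}}}(\bm{\sigma},\bm{E},\bm{F})\}$. The same description holds for $\partial\mathcal{F}_{\bm{\Omega}}(\bm{\sigma},\bm{0},\bm{0})$.

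\textbf{The $\bm{\sigma}$-block.} By Proposition \ref{prop1} (i), both $\bm{\sigma}$-gradients equal ${\rm diag}(\hat{\bm{U}}e^{\bm{E}}\nabla f(\bm{Y})(\hat{\bm{V}}e^{\bm{F}})^\top)$. The $\lambda\partial\Vert\bm{\sigma}\Vert_p^p$ term is the same set in both cases. Hence the $\bm{\sigma}$-components of the two subdifferentials coincide exactly.

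\textbf{The $\bm{E}$- and $\bm{F}$-blocks.} It remains to show that $\Vert\nabla_{\bm{E}}f_{\hat{\bm{\Omega}}}(\bm{\sigma},\bm{E},\bm{F})\Vert_F\le\eta\Vert\nabla_{\bm{E}}f_{\bm{\Omega}}(\bm{\sigma},\bm{0},\bm{0})\Vert_F$, and the analogous bound for $\bm{F}$.

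For $\bm{H}\in\mathcal{S}(m)$, I will use the standard derivative-of-the-exponential formula
$$\frac{d}{dt}e^{\bm{E}+t\bm{H}}\Big|_{t=0}=e^{\bm{E}}\,\mathcal{L}_{\bm{E}}(\bm{H}),\qquad \mathcal{L}_{\bm{E}}(\bm{H}):=\sum_{k\ge0}\frac{(-{\rm ad}_{\bm{E}})^k}{(k+1)!}\bm{H},$$
where ${\rm ad}_{\bm{E}}(\bm{H})=\bm{E}\bm{H}-\bm{H}\bm{E}$. Since ${\rm ad}_{\bm{E}}$ maps $\mathcal{S}(m)$ into itself, $\mathcal{L}_{\bm{E}}$ is a linear map $\mathcal{S}(m)\to\mathcal{S}(m)$.

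Because $e^{\bm{E}}(\bm{I}+t\mathcal{L}_{\bm{E}}(\bm{H}))=e^{\bm{E}}e^{t\mathcal{L}_{\bm{E}}(\bm{H})}+o(t)$, the chain rule gives
$$\langle\nabla_{\bm{E}}f_{\hat{\bm{\Omega}}}(\bm{\sigma},\bm{E},\bm{F}),\bm{H}\rangle=\langle\nabla_{\bm{E}}f_{\bm{\Omega}}(\bm{\sigma},\bm{0},\bm{0}),\mathcal{L}_{\bm{E}}(\bm{H})\rangle .$$
This is the same first-order computation as in the proof of Proposition \ref{prop1} (ii). It follows that $\nabla_{\bm{E}}f_{\hat{\bm{\Omega}}}(\bm{\sigma},\bm{E},\bm{F})=\mathcal{L}_{\bm{E}}^*\nabla_{\bm{E}}f_{\bm{\Omega}}(\bm{\sigma},\bm{0},\bm{0})$.

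Since $\Vert{\rm ad}_{\bm{E}}\Vert_{\rm op}\le2\Vert\bm{E}\Vert_F\le2\delta_1$, we get
$$\Vert\mathcal{L}_{\bm{E}}^*\Vert_{\rm op}\le\sum_{k\ge0}\frac{(2\delta_1)^k}{(k+1)!}=\frac{e^{2\delta_1}-1}{2\delta_1}.$$
The identical argument applies to $\bm{F}$ with $\mathcal{L}_{\bm{F}}$.

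\textbf{Conclusion.} Set $\eta:=\max\{1,(e^{2\delta_1}-1)/(2\delta_1)\}$, which depends only on $\delta_1$. Take any element $(\bm{v},\bm{g}_E,\bm{g}_F)$ of $\partial\mathcal{F}_{\bm{\Omega}}(\bm{\sigma},\bm{0},\bm{0})$. Then $(\bm{v},\mathcal{L}_{\bm{E}}^*\bm{g}_E,\mathcal{L}_{\bm{F}}^*\bm{g}_F)$ lies in $\partial\mathcal{F}_{\hat{\bm{\Omega}}}(\bm{\sigma},\bm{E},\bm{F})$, and its norm is at most $\eta$ times the norm of $(\bm{v},\bm{g}_E,\bm{g}_F)$. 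Taking the infimum yields the claimed inequality.

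\textbf{Main obstacle.} The delicate step is justifying the identity $\nabla_{\bm{E}}f_{\hat{\bm{\Omega}}}(\bm{\sigma},\bm{E},\bm{F})=\mathcal{L}_{\bm{E}}^*\nabla_{\bm{E}}f_{\bm{\Omega}}(\bm{\sigma},\bm{0},\bm{0})$. This requires handling $e^{\bm{E}+t\bm{H}}$ when $\bm{H}$ does not commute with $\bm{E}$, which is exactly the case Proposition \ref{prop1} (ii) excludes. I will rely on the series representation of the differential of the exponential and check that its image is skew-symmetric, so that it is a legitimate direction in $\mathcal{S}(m)$.
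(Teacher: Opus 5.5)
Your proposal is correct and is essentially the paper's proof: the $\bm{\sigma}$-blocks coincide via Proposition~\ref{prop1}~(i), and your $\mathcal{L}_{\bm{E}}(\bm{H})=\sum_{k\ge0}\frac{(-{\rm ad}_{\bm{E}})^k}{(k+1)!}\bm{H}$ is exactly the paper's skew-symmetric series $\mathcal{P}(\bm{E},\bm{S})$, which yields the same identity $\langle\nabla_{\bm{E}}f_{\hat{\bm{\Omega}}}(\bm{\sigma},\bm{E},\bm{F}),\bm{H}\rangle=\langle\nabla_{\bm{E}}f_{\bm{\Omega}}(\bm{\sigma},\bm{0},\bm{0}),\mathcal{L}_{\bm{E}}(\bm{H})\rangle$. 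The only difference is the constant. Your bound $(e^{2\delta_1}-1)/(2\delta_1)$, obtained from $\Vert{\rm ad}_{\bm{E}}\Vert\le2\Vert\bm{E}\Vert_F$, is sharper than the paper's $1/(1-\delta_1)$ and does not need $\delta_1<1$; your elementwise assembly with $\eta\ge1$ also tracks the squared norms more carefully than the paper's $\eta$ versus $\eta^2$ bookkeeping.
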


\begin{proof}Let $\delta_1\in(0,1)$ and $\eta:=\frac{1}{\sqrt{1-\delta_1}}$. We will prove that $\eta$ is as desired. For this end, let $(\bm{\sigma},\bm{E},\bm{F})\in\mathcal{M}$ satisfying $\Vert \bm{E}\Vert_F\le\delta_1$ and $\Vert \bm{F}\Vert_F\le\delta_1$.
Applying Proposition  \ref{prop1} (i) and using the definition of $\bm{\Omega}$, one has
$$\nabla_{\bm{\sigma}}f_{\hat{\bm{\Omega}}}(\bm{\sigma},\bm{E},\bm{F})
={\rm diag}\left(\hat{\bm{U}}e^{\bm{E}} \left(\mathcal{A}^*\mathcal{A}\left((\hat{\bm{U}}e^{\bm E})^\top \mathcal{D}(\bm{\sigma})\hat{\bm{V}}e^{\bm F}\right)-\mathcal{A}^*(\bm{b})\right)(\hat{\bm{V}}e^{\bm{F}})^\top\right)=\nabla_{\bm{\sigma}}f_{\bm{\Omega}}(\bm{\sigma}, \bm{0},\bm{0}).$$
Thus, it follows from Proposition \ref{lemma-par} (iii) that
\begin{align*}
{\rm d}^2(\bm{0},\partial \mathcal{F}_{\hat{\bm{\Omega}}}(\bm{\sigma},\bm{E},\bm{F}))= \Vert\nabla_{\bm{E}}f_{\hat{\bm{\Omega}}}(\bm{\sigma},\bm{E},\bm{F})\Vert_F^2+\Vert\nabla_{\bm{F}}f_{\hat{\bm{\Omega}}}(\bm{\sigma},\bm{E},\bm{F})\Vert_F^2+{\rm d}^2(-\nabla_{\bm{\sigma}}f_{\bm{\Omega}}(\bm{\sigma}, \bm{0},\bm{0}), \lambda\partial\Vert \bm{\sigma} \Vert_{p}^p).
\end{align*}
It suffices to prove \begin{align}\label{fomegaE}
\Vert \nabla_{\bm{E}}f_{\hat{\bm{\Omega}}}(\bm{\sigma},\bm{E},\bm{F})\Vert_F\leq\eta^2\Vert\nabla_{\bm{E}}f_{\bm{\Omega}}(\bm{\sigma},\bm{0},\bm{0})\Vert_F,\quad \Vert \nabla_{\bm{F}}f_{\hat{\bm{\Omega}}}(\bm{\sigma},\bm{E},\bm{F})\Vert_F\leq\eta^2\Vert\nabla_{\bm{F}}f_{\bm{\Omega}}(\bm{\sigma},\bm{0},\bm{0})\Vert_F.
\end{align}
Granting this, we get
\begin{align*}
{\rm d}^2(\bm{0},\partial \mathcal{F}_{\hat{\bm{\Omega}}}(\bm{\sigma},\bm{E},\bm{F}))&\leq\eta^2\Vert\nabla_{\bm{E}}f_{\bm{\Omega}}(\bm{\sigma},\bm{0},\bm{0})\Vert_F^2+\eta^2\Vert\nabla_{\bm{F}}f_{\bm{\Omega}}(\bm{\sigma},\bm{0},\bm{0})\Vert_F^2+{\rm d}^2(-\nabla_{\bm{\sigma}}f_{\bm{\Omega}}(\bm{\sigma}, \bm{0},\bm{0}), \lambda\partial\Vert \bm{\sigma} \Vert_{p}^p)\\
&\le\eta^2{\rm d}^2(\bm{0},\partial F_{\bm{\Omega}}(\bm{\sigma},\bm{0},\bm{0}))
\end{align*}
and so, the proof is completed.

We only prove the first inequality in \eqref{fomegaE} as the second one can be proved in  similar arguments.
For this end, suppose that $\bm{S}\in\mathcal{S}(m)$. Then, by definition, one has
\begin{align}\label{deltaADD}
\langle \nabla_{\bm{E}}f_{\hat{\bm{\Omega}}}(\bm{\sigma},\bm{E},\bm{F}), \bm{S}\rangle
=\lim_{t\rightarrow0}\frac{\Vert\mathcal{A} (\hat{\bm{U}}e^{\bm{E}+t\bm{S}})^\top \mathcal{D}(\bm{\sigma})\hat{\bm{V}}e^{\bm{F}})-\bm{b}\Vert^2
-\Vert\mathcal{A}((\hat{\bm{U}}e^{\bm{E}})^\top\mathcal{D}(\bm{\sigma})\hat{\bm{V}}e^{\bm{F}})-\bm{b}\Vert^2 }{2t}.
\end{align}
To proceed, let us define  $$ H_{1}(\bm{E},\bm{S}):=\bm{S},\quad H_{n}(\bm{E},\bm{S}):=H_{n-1}(\bm{E},\bm{S}){\bm E}-{\bm E}H_{n-1}(\bm{E},\bm{S}),\quad n=2,3,\ldots,$$  and  write $$\mathcal{P}(\bm{E},\bm{S}):=\sum\limits_{n=1}^{\infty}\frac{1}{n!}H_{n}(\bm{E},\bm{S}).$$
By using mathematical induction, one can easily check
 \begin{equation*}\label{Hproperty}\|H_{n}(\bm{E},\bm{S})\|_F\le n!\|\bm{E}\|_F^{n-1}\|\bm{S}\|_F\quad{\rm for\; each}\quad n=1,2,\ldots.\end{equation*}
Then, in view of the definition of $\mathcal{P}(\bm{E},\bm{S})$, we deduce
\begin{align}\label{pestimate}
\Vert \mathcal{P}(\bm{E},\bm{S})\Vert\le \|\bm{S}\|_F\sum\limits_{n=1}^{\infty}\|\bm{E}\|_F^{n-1} \leq\frac{ \|\bm{S}\|_F}{1-\delta_1}
\end{align}
(noting that  $\Vert \bm{E}\Vert_F\leq\delta_1<1$).
Thanks to \eqref{eX} and using again the definition of $\mathcal{P}(\bm{E},\bm{S})$, we have
\begin{align}\label{eE-1}
e^{\bm{E}+t\bm{S}}=e^{\bm{E}}(\bm{I}+t\mathcal{P}(\bm{E},\bm{S}))+o(t).
\end{align}
Substituting \eqref{eE-1} into \eqref{deltaADD} and using elementary  calculations, we obtain
\begin{align*}
\langle \nabla_{\bm{E}}f_{\hat{\bm{\Omega}}}(\bm{\sigma},\bm{E},\bm{F}), \bm{S}\rangle
&=\Big\langle \mathcal{A} \left((\hat{\bm{U}}e^{\bm{E}})^\top \mathcal{D}(\bm{\sigma})\hat{\bm{V}}e^{\bm{F}}\right)-\bm{b}, \;\mathcal{A}\left(\mathcal{P}(\bm{E},\bm{S})^\top(\hat{\bm{U}}e^{\bm{E}})^\top \mathcal{D}(\bm{\sigma})\hat{\bm{V}}e^{\bm{F}}\right)  \Big\rangle.
\end{align*}
Then, defining $\bm{Y}:=(\hat{\bm{U}}e^{\bm{E}})^\top \mathcal{D}(\bm{\sigma})\hat{\bm{V}}e^{\bm{F}}$, we have
\begin{equation}\label{NAB}
\begin{aligned}
\langle \nabla_{\bm{E}}f_{\hat{\bm{\Omega}}}(\bm{\sigma},\bm{E},\bm{F}), \;\bm{S}\rangle&=\Big\langle \mathcal{A}(\bm{Y})-\bm{b},\; \mathcal{A}( \mathcal{P}(\bm{E},\bm{S})^\top\bm{Y})\Big\rangle\\
&=\Big\langle\bm{Y}(\mathcal{A}^*(\mathcal{A} \left(\bm{Y}\right)-\bm{b}))^\top, \;\mathcal{P}(\bm{E},\bm{S})\Big\rangle\\
&=\Big\langle\frac{1}{2}\left[\bm{Y}(\mathcal{A}^*(\mathcal{A} \left(\bm{Y}\right)-\bm{b}))^\top-\mathcal{A}^*(\mathcal{A} \left(\bm{Y}\right)-\bm{b}) \bm{Y}^\top\right], \;\mathcal{P}(\bm{E},\bm{S})\Big\rangle,
\end{aligned}
\end{equation}
where the last equality holds because $\mathcal{P}(\bm{E},\bm{S})$ is skew-symmetric.
On the other hand,
applying  \eqref{fparsigma} (with $\bm{Y}$ in place of  $\bm{X}$), one sees
 $$\nabla_{\bm{E}}f_{\bm{\Omega}}(\bm{\sigma},\bm{0},\bm{0})=\frac{1}{2}\left(\bm{Y}\nabla f(\bm{Y})^\top-\nabla f(\bm{Y})\bm{Y}^\top \right)=\frac{1}{2}\left[\bm{Y}(\mathcal{A}^*(\mathcal{A} \left(\bm{Y}\right)-\bm{b}))^\top-\mathcal{A}^*(\mathcal{A} \left(\bm{Y}\right)-\bm{b}) \bm{Y}^\top\right].$$
This together with \eqref{NAB} yields
\begin{align*}
\langle \nabla_{\bm{E}}f_{\hat{\bm{\Omega}}}(\bm{\sigma},\bm{E},\bm{F}), \bm{S}\rangle=\Big\langle \nabla_{\bm{E}}f_{\bm{\Omega}}(\bm{\sigma},\bm{0},\bm{0}),\; \mathcal{P}(\bm{E},\bm{S})\Big\rangle.
\end{align*}
Therefore, thanks to \eqref{pestimate} and the definition of $\eta$, the first inequality in \eqref{fomegaE} follows immediately and so, the proof is complete.
\end{proof}

\subsection{Sublinear Convergence of the Proposed Algorithm}
As assumed in Section 3,  the sequences $\{\bm{X}_{k}\}$, $\{\bm{\Omega}_k=(\bm{U}_{k},\bm{V}_k)\}$ and $\{(\bm{\sigma}_k,\bm{E}_{k},\bm{F}_k)\}$ are generated by Algorithm \ref{alg1}. To analyze the convergence of the DPGA,  we provide some  auxiliary results.
\begin{lemma}\label{lemma3}Let $k\in\mathbb{N}$ and $L$ be a  positive constant. Then
the following  two assertions hold:
\begin{enumerate}
  \item  [\rm (i)]$\frac{2}{3}\Vert\bm{E}_{k}\Vert_F\le\Vert \bm{U}_{k+1}-\bm{U}_{k}\Vert_F \leq 2\Vert\bm{E}_{k}\Vert_F$,  $\frac{2}{3}\Vert\bm{F}_{k}\Vert_F\le\Vert \bm{V}_{k+1}-\bm{V}_k\Vert_F \leq 2\Vert\bm{F}_{k}\Vert_F$;
  \item [\rm (ii)]$\Vert \bm{X}_{k+1}-\bm{X}_{k}\Vert_F\leq L\Vert (\bm{\sigma}_{k+1}-\bm{\sigma}_k,\bm{E}_{k}, \bm{F}_{k})\Vert$.
\end{enumerate}

\end{lemma}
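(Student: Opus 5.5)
For (i), I will use the Cayley map $\mathcal{C}$ from \eqref{mapC}. By \eqref{eq:U}, $\bm{U}_{k+1}^\top=(\bm{I}+\frac12\bm{E}_k)^{-1}(\bm{I}-\frac12\bm{E}_k)\bm{U}_k^\top$. Transposing and using $\bm{E}_k^\top=-\bm{E}_k$ gives $\bm{U}_{k+1}=\bm{U}_k\mathcal{C}(\bm{E}_k)$, as already noted in the proof of Proposition \ref{prop4}. Orthogonal invariance of $\Vert\cdot\Vert_F$ then yields
$$\Vert\bm{U}_{k+1}-\bm{U}_k\Vert_F=\Vert\mathcal{C}(\bm{E}_k)-\bm{I}\Vert_F=\Vert\bm{E}_k(\bm{I}-\tfrac12\bm{E}_k)^{-1}\Vert_F,$$
since $(\bm{I}+\frac12\bm{E}_k)-(\bm{I}-\frac12\bm{E}_k)=\bm{E}_k$ and the factors commute.

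Because $\bm{E}_k$ is skew-symmetric, it is normal with purely imaginary eigenvalues $\mathrm{i}\theta_j$. Hence $(\bm{I}-\frac12\bm{E}_k)^{-1}$ is normal with singular values $(1+\theta_j^2/4)^{-1/2}\le1$. This gives the upper bound $\Vert\bm{E}_k\Vert_F$ immediately, and a fortiori $2\Vert\bm{E}_k\Vert_F$.

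For the lower bound, I will write $\bm{E}_k=(\mathcal{C}(\bm{E}_k)-\bm{I})(\bm{I}-\frac12\bm{E}_k)$. Then $\Vert\bm{E}_k\Vert_F\le\Vert\mathcal{C}(\bm{E}_k)-\bm{I}\Vert_F\,\Vert\bm{I}-\frac12\bm{E}_k\Vert_2$, and $\Vert\bm{I}-\frac12\bm{E}_k\Vert_2\le1+\frac12\Vert\bm{E}_k\Vert_F\le\frac32$. This step uses $\Vert\bm{E}_k\Vert_F\le1$, which holds under both stepsize strategies (as recorded in the proof of Proposition \ref{prop4}: $s_k\le 1/\Vert\nabla_{\bm{E}}^k\|_F$ in (ii), and $\rho_s<(\max\{\cdot\})^{-1}$ in (i)). I expect this to be the only delicate point, so I will state explicitly that the lower bound $\frac23$ relies on it. The same argument applies verbatim to $\bm{V}_k$, $\bm{F}_k$, giving the second pair of inequalities in (i).

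For (ii), I will telescope:
$$\bm{X}_{k+1}-\bm{X}_k=(\bm{U}_{k+1}-\bm{U}_k)^\top\mathcal{D}(\bm{\sigma}_{k+1})\bm{V}_{k+1}+\bm{U}_k^\top\mathcal{D}(\bm{\sigma}_{k+1}-\bm{\sigma}_k)\bm{V}_{k+1}+\bm{U}_k^\top\mathcal{D}(\bm{\sigma}_k)(\bm{V}_{k+1}-\bm{V}_k).$$
Orthogonality of the factors and (i) then give
$$\Vert\bm{X}_{k+1}-\bm{X}_k\Vert_F\le 2\Vert\bm{\sigma}_{k+1}\Vert\Vert\bm{E}_k\Vert_F+\Vert\bm{\sigma}_{k+1}-\bm{\sigma}_k\Vert+2\Vert\bm{\sigma}_k\Vert\Vert\bm{F}_k\Vert_F.$$
By Remark \ref{rmk:cluster} (i), $\{\bm{\sigma}_k\}$ is bounded, say $\Vert\bm{\sigma}_k\Vert\le M$ for all $k$. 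Cauchy--Schwarz on the three terms then yields (ii) with $L:=\sqrt3\max\{1,2M\}$. Accordingly, I read the statement as asserting the existence of such an $L$ independent of $k$, which is the role the constant $L$ plays in the statement.
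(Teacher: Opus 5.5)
Your proposal is correct and is essentially the paper's proof. For (i) you reduce to $\Vert\mathcal{C}(\bm{E}_k)-\bm{I}\Vert_F$ via $\bm{U}_{k+1}=\bm{U}_k\mathcal{C}(\bm{E}_k)$ and take the lower bound from $(\mathcal{C}(\bm{E}_k)-\bm{I})(\bm{I}-\frac12\bm{E}_k)=\bm{E}_k$, as in Lemma \ref{fact3}; (ii) follows from the same three-term splitting, the boundedness in Remark \ref{rmk:cluster} (i), and Cauchy--Schwarz.

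Your details are cleaner than the paper's in two places. The constant $\frac23$ genuinely needs your operator-norm bound $\Vert\bm{I}-\frac12\bm{E}_k\Vert_2\le\frac32$, since the Frobenius norm written in Lemma \ref{fact3} satisfies $\Vert\bm{I}-\frac12\bm{E}_k\Vert_F\ge\sqrt{m}$. Also, your spectral bound $\Vert\mathcal{C}(\bm{E}_k)-\bm{I}\Vert_F\le\Vert\bm{E}_k\Vert_F$ needs no smallness of $\bm{E}_k$, so (ii) survives even when strategy (i) returns $s_k=\rho_s^0=1$. In that case neither your $\rho_s$ argument nor the paper actually guarantees $\Vert\bm{E}_k\Vert_F\le1$.
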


\begin{proof}(i). We only need to estimate $\Vert \bm{U}_{k+1}-\bm{U}_k\Vert_{F}$. Note by $(A_1)$ and the definition of $\bm{E}_{k}$ in Algorithm \ref{alg1}  that $\|\bm{E}_{k}\|_F\le1$. Since, by the definitions of $\bm{U}_{k+1}$ and $\C(\cdot)$ in \eqref{mapC}, $\Vert \bm{U}_{k+1}-\bm{U}_k\Vert_{F}=\Vert \C(\bm{E}_{k})-\bm{I}\Vert_{F},$ the first inequality in assertion (i) is seen to hold by applying Lemma \ref{fact3} (with $\bm{X}$ replaced by $\bm{E}_{k}$).

(ii). It suffices to prove  there exists a positive constant $L_1$  such that
\begin{equation}\label{xkL}\Vert \bm{X}_{k+1}-\bm{X}_{k}\Vert_F\leq L_1\Vert (\bm{\sigma}_{k+1},\bm{U}_{k+1},\bm{V}_{k+1})-(\bm{\sigma}_k,\bm{U}_{k},\bm{V}_k)\Vert\end{equation}
(see assertion (i)). By  the definition of $\{\bm{X}_{k}\}$ in Algorithm \ref{alg1}, one has
\begin{align*}
\Vert \bm{X}_{k+1}-\bm{X}_{k}\Vert_F
&=\Vert \bm{U}_{k+1}^\top \mathcal{D}(\bm{\sigma}_{k+1})\bm{V}_{k+1}-\bm{U}_{k}^\top \mathcal{D}(\bm{\sigma}_{k})\bm{V}_{k}\Vert_F\notag\\
&\leq \max\{\Vert\bm{\sigma}_{k+1}\Vert,\Vert\bm{\sigma}_{k}\Vert,1\}(\Vert \bm{U}_{k+1}-\bm{U}_k\Vert_F+\Vert\bm{\sigma}_{k+1}-\bm{\sigma}_k\Vert+\Vert \bm{V}_{k+1}-\bm{V}_k\Vert_F)\notag\\
&\leq \sqrt{3}\max\{\Vert\bm{\sigma}_{k+1}\Vert,\Vert\bm{\sigma}_{k}\Vert,1\}\Vert (\bm{\sigma}_{k+1},\bm{U}_{k+1},\bm{V}_{k+1})-(\bm{\sigma}_k,\bm{U}_{k},\bm{V}_k)\Vert,
\end{align*}
where the last inequality holds because of the  fact that $(a+b+c)^2\leq 3(a^2+b^2+c^2)$ for any $ a,\;b,\;c\ge0.$
Thus, thanks to Remark \ref{rmk:cluster} (i),  \eqref{xkL} follows immediately  and so, the proof is complete.
\end{proof}

\begin{lemma}\label{prop4-2}
There exists a positive number $\beta>0$  such that
\begin{equation}\label{H_2}
{\rm d}(0,\partial \mathcal{F}_{\bm{\Omega}_k}(\bm{\sigma}_k,\bm{0},\bm{0}))\leq \beta\Vert (\bm{\sigma}_{k}-\bm{\sigma}_{k-1},\bm{E}_{k-1},\bm{F}_{k-1})\Vert\quad {\rm for\; each} \quad k\ge1.
\end{equation}

\end{lemma}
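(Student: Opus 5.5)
Our plan is to exhibit one explicit element of $\partial \mathcal{F}_{\bm{\Omega}_k}(\bm{\sigma}_k,\bm{0},\bm{0})$ coming from the optimality condition of the $\bm{\sigma}$-subproblem \eqref{eq:sigma} at step $k-1$. We then bound its norm blockwise by the previous step $(\bm{\sigma}_k-\bm{\sigma}_{k-1},\bm{E}_{k-1},\bm{F}_{k-1})$.

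By Proposition \ref{lemma-par} (iii), $\partial \mathcal{F}_{\bm{\Omega}_k}(\bm{\sigma}_k,\bm{0},\bm{0})$ is the product of $\nabla_{\bm{\sigma}} f_{\bm{\Omega}_k}(\bm{\sigma}_k,\bm{0},\bm{0})+\lambda\partial\|\bm{\sigma}_k\|_p^p$ with the singletons $\{\nabla_{\bm{E}}^k\}$ and $\{\nabla_{\bm{F}}^k\}$.

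\emph{The $\bm{\sigma}$-block.} Since $\bm{\sigma}_k$ minimizes \eqref{eq:sigma} (index $k-1$), the generalized Fermat rule together with Proposition \ref{lemma-par} (i) gives
\[
\bm{v}_k:=-\tfrac{1}{t_{k-1}}(\bm{\sigma}_k-\bm{\sigma}_{k-1})-\nabla_{\bm{\sigma}} f_{\bm{\Omega}_{k-1}}(\bm{\sigma}_{k-1},\bm{0},\bm{0})\in\lambda\partial\|\bm{\sigma}_k\|_p^p .
\]
Hence the distance restricted to the $\bm{\sigma}$-block is at most
\[
\tfrac1{\underline t}\|\bm{\sigma}_k-\bm{\sigma}_{k-1}\|+\|\nabla_{\bm{\sigma}} f_{\bm{\Omega}_k}(\bm{\sigma}_k,\bm{0},\bm{0})-\nabla_{\bm{\sigma}} f_{\bm{\Omega}_{k-1}}(\bm{\sigma}_{k-1},\bm{0},\bm{0})\|,
\]
where we used Remark \ref{sktkbound}. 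Lemma \ref{prop2} (i) bounds the last term by
\[
\|\mathcal{A}^*\mathcal{A}\|_{\rm op}\|\bm{\sigma}_k-\bm{\sigma}_{k-1}\|+\Gamma(\|\bm{U}_k-\bm{U}_{k-1}\|_F+\|\bm{V}_k-\bm{V}_{k-1}\|_F),
\]
with $\Gamma$ a uniform bound on $\Gamma_{\bm{X}}(\cdot)$ along the iterates. Such a bound exists by boundedness (Remark \ref{rmk:cluster} (i)). Lemma \ref{lemma3} (i) then converts the $\bm{U}$- and $\bm{V}$-differences into $2\|\bm{E}_{k-1}\|_F$ and $2\|\bm{F}_{k-1}\|_F$.

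\emph{The $\bm{E}$- and $\bm{F}$-blocks.} By \eqref{ExpE}, $\nabla_{\bm{E}}^{k-1}=-\bm{E}_{k-1}/s_{k-1}$. Therefore
\[
\|\nabla_{\bm{E}}^k\|_F\le \tfrac{1}{\underline s}\|\bm{E}_{k-1}\|_F+\|\nabla_{\bm{E}}^k-\nabla_{\bm{E}}^{k-1}\|_F .
\]
Lemma \ref{prop2} (ii), applied with $(\bm{\sigma},\bm{\Omega})=(\bm{\sigma}_k,\bm{\Omega}_k)$ and $(\bm{\omega},\bm{\Theta})=(\bm{\sigma}_{k-1},\bm{\Omega}_{k-1})$, bounds the difference by $\Gamma(\|\bm{\sigma}_k-\bm{\sigma}_{k-1}\|+M\|\bm{U}_k-\bm{U}_{k-1}\|_F+M\|\bm{V}_k-\bm{V}_{k-1}\|_F)$, where $M$ bounds $\|\bm{\sigma}_k\|$. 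Lemma \ref{lemma3} (i) again reduces this to $\|\bm{E}_{k-1}\|_F$ and $\|\bm{F}_{k-1}\|_F$. The $\bm{F}$-block is handled identically using Lemma \ref{prop2} (iii).

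Summing the three block estimates and using $a+b+c\le\sqrt3\|(a,b,c)\|$ yields \eqref{H_2} with a $\beta$ depending only on $\underline t,\underline s,\Gamma,M$ and $\|\mathcal{A}^*\mathcal{A}\|_{\rm op}$.

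The main obstacle is uniformity of the constants. Lemma \ref{prop2} provides $\Gamma_{\bm{X}}$ evaluated at iterate-dependent points, so we must check that $\sup_k\Gamma_{\bm{X}_k}(\bm{\sigma}_{k-1})$ and $\sup_k\Gamma_{\bm{X}_k}(\bm{\sigma}_k)$ are finite. This follows from the boundedness of $\{\bm{\sigma}_k\}$ and $\{\nabla f(\bm{X}_k)\}$ noted in Remark \ref{sktkbound}. We also need the lower bounds $\underline t$ and $\underline s$ from \eqref{tklowbound}, which hold under both stepsize strategies.
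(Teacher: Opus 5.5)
Your proposal is correct and follows the paper's proof essentially step for step. You use the same subdifferential element, built from the optimality condition of the $\bm{\sigma}$-subproblem at step $k-1$, and the identity $\nabla_{\bm{E}}^{k-1}=-\bm{E}_{k-1}/s_{k-1}$ (and its $\bm{F}$ analogue), together with the Lipschitz-type estimates of Lemma~\ref{prop2} (i)--(iii), Lemma~\ref{lemma3} (i), and the uniform stepsize lower bounds of Remark~\ref{sktkbound}. The only cosmetic difference is that you bound the three blocks separately and then sum, whereas the paper applies the triangle inequality once to the full gradient difference in $\mathcal{M}$ to get $\beta=1/\min\{\underline{s},\underline{t}\}+L_1$.
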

\begin{proof}To begin, fix $k\ge1$. Applying Lemma \ref{prop2} (i), we have
\begin{align*}
&\Vert\nabla_{\bm{\sigma}} f_{\bm{\Omega}_{k}}(\bm{\sigma}_{k},\bm{0},\bm{0})-\nabla_{\bm{\sigma}} f_{\bm{\Omega}_{k-1}}(\bm{\sigma}_{k-1},\bm{0},\bm{0}) \Vert\\
\le\ & \Vert \mathcal{A}^* \mathcal{A}\Vert_{\rm{op}}\Vert \bm{\sigma}_{k}-\bm{\sigma}_{k-1}\Vert+(\Vert\nabla f(\bm{X}_k)\Vert_F+\Vert\mathcal{A}^*\mathcal{A}\Vert_{{\rm op}}\Vert \bm{\sigma}_k\Vert)\|\bm{U}_k-\bm{U}_{k-1}\|_F\\
&+(\Vert\nabla f(\bm{X}_k)\Vert_F+\Vert\mathcal{A}^*\mathcal{A}\Vert_{{\rm op}}\Vert \bm{\sigma}_{k-1}\Vert)\|\bm{V}_{k}-\bm{V}_{k-1}\|_F.
\end{align*}
Then, thanks to Lemma \ref{lemma3} (i) and Remark \ref{rmk:cluster} (i),  one checks that there exists a positive constant $L_1$ such that
$$\Vert\nabla_{\bm{\sigma}} f_{\bm{\Omega}_{k}}(\bm{\sigma}_{k},\bm{0},\bm{0})-\nabla_{\bm{\sigma}} f_{\bm{\Omega}_{k-1}}(\bm{\sigma}_{k-1},\bm{0},\bm{0}) \Vert\le L_1\Vert (\bm{\sigma}_{k}-\bm{\sigma}_{k-1}, \bm{E}_{k-1},\bm{F}_{k-1})\Vert.$$
Similarly, we can also prove the existence of Lipschitz constant $L_1$ for $\nabla_{\bm{E}} f$ and $\nabla_{\bm{F}} f$.
Thus, it follows that
\begin{equation}\label{L-constant}
\Vert\nabla f_{\bm{\Omega}_{k}}(\bm{\sigma}_{k},\bm{0},\bm{0})-\nabla f_{\bm{\Omega}_{k-1}}(\bm{\sigma}_{k-1},\bm{0},\bm{0}) \Vert\leq L_1\Vert (\bm{\sigma}_{k}-\bm{\sigma}_{k-1}, \bm{E}_{k-1},\bm{F}_{k-1})\Vert.
\end{equation}
Let $\underline{s},\;\underline{t}>0$  be determined by Remark \ref{sktkbound}. Set
\begin{equation}\label{deltade} \beta:=\frac{1}{\min\{\underline{s},\underline{t}\}}+L_1.
\end{equation}
Below we will prove that $\beta$ is  as desired.
Note by \eqref{eq:sigma} and \cite[Theorem 10.1]{Mordukhovich2006Variational} that
$$\bm 0\in\nabla_{\bm{\sigma}} f_{\bm{\Omega}_{k-1}}(\bm{\sigma}_{k-1},\bm{0},\bm{0})+\lambda\partial\Vert \bm{\sigma}_{k}\Vert_{p}^p+\frac{1}{t_{k-1}}(\bm{\sigma}_{k}-\bm{\sigma}_{k-1}).$$
Thus, there exists $\bm{z}_{k}\in\lambda\partial\Vert \bm{\sigma}_{k}\Vert_{p}^p$ such that
$\nabla_{\bm{\sigma}} f_{\bm{\Omega}_{k-1}}(\bm{\sigma}_{k-1},\bm{0},\bm{0})+\bm{z}_{k}=-\frac{1}{t_{k-1}}(\bm{\sigma}_{k}-\bm{\sigma}_{k-1})$. This together with \eqref{ExpE} as well as Remark \ref{sktkbound} yields
\begin{align}\label{grabd}
\Vert\nabla f_{\bm{\Omega}_{k-1}}(\bm{\sigma}_{k-1},\bm{0},\bm{0})+(\bm{z}_{k},\bm{0},\bm{0})\Vert&=\bigg\Vert\left(\frac{1}{t_{k-1}}(\bm{\sigma}_{k}-\bm{\sigma}_{k-1}),\frac{1}{s_{k-1}}\bm{E}_{k-1},\frac{1}{s_{k-1}}\bm{F}_{k-1}\right)\bigg\Vert\notag\\
&\leq\frac{1}{\min\{\underline{s},\underline{t}\}} \Vert(\bm{\sigma}_{k}-\bm{\sigma}_{k-1},\bm{E}_{k-1},\bm{F}_{k-1})\Vert.
\end{align}
To proceed, let $\bm{v}_k:=\nabla f_{\bm{\Omega}_{k}}(\bm{\sigma}_{k},\bm{0},\bm{0})+(\bm{z}_{k},\bm{0},\bm{0})$.
Obviously, $\bm{v}_{k}\in\partial \mathcal{F}_{\bm{\Omega}_k}(\bm{\sigma}_k,\bm{0},\bm{0})$. Moreover, from \eqref{grabd} and \eqref{L-constant}, we derive
\begin{align*}
\Vert \bm{v}_{k}\Vert&\leq \Vert\nabla f_{\bm{\Omega}_{k-1}}(\bm{\sigma}_{k-1},\bm{0},\bm{0})+(\bm{z}_{k},\bm{0},\bm{0})\Vert
+\Vert\nabla f_{\bm{\Omega}_{k}}(\bm{\sigma}_{k},\bm{0},\bm{0})-\nabla f_{\bm{\Omega}_{k-1}}(\bm{\sigma}_{k-1},\bm{0},\bm{0}) \Vert\\
&\leq\left(\frac{1}{\min\{\underline{s},\underline{t}\}}+L_1\right) \Vert(\bm{\sigma}_{k}-\bm{\sigma}_{k-1},\bm{E}_{k-1},\bm{F}_{k-1})\Vert.
\end{align*}
 This together with \eqref{deltade} shows \eqref{H_2} and so, the proof is completed.
\end{proof}

Below we will present an important result which  is crucial for the proof of the main theorem in this subsection. For this purpose,
recall that $(\bm{\bar{\sigma}},\bm{\bar{U}},\bm{\bar{V}})$ is a cluster point of $\{(\bm{\sigma}_k,\bm{U}_k,\bm{V}_k)\}$ (as we assumed in Remark \ref{rmk:cluster} (i)) and let $\bar{\bm{\Omega}}:=(\bar{\bm{U}},\bar{\bm{V}})\in\mathcal{O}(m)\times\mathcal{O}(n)$.
Set $\mathbb{N}_0:=\{k\in\N: \;\Vert\bm{\bar{U}}^{\top}\bm{U}_{k}-\bm{I}\Vert_F<1,\;\Vert\bm{\bar{V}}^{\top}\bm{V}_{k}-\bm{I}\Vert_F<1\}$ and fix $k\in\mathbb{N}_0$.
Write
\begin{equation}\label{Ekbarde}\bm{\bar{E}}_k:={\rm log}(\bm{\bar{U}}^{\top}\bm{U}_{k})\quad{\rm and}\quad\bm{\bar{F}}_k:={\rm log}(\bm{\bar{V}}^{\top}\bm{V}_{k}).\end{equation}
Since $(\bm{\bar{\sigma}},\bm{\bar{U}},\bm{\bar{V}})$ is a cluster point of $\{(\bm{\sigma}_k,\bm{U}_k,\bm{V}_k)\}$, one checks by Lemma \ref{lemmaexplog}  (ii) that $(\bar{\bm{\sigma}}, \bm{0}, \bm{0})$ is a cluster point of $\{(\bm{\sigma}_k, \bm{\bar{E}}_k, \bm{\bar{F}}_k)\}$.
By \eqref{Ekbarde},  we also have
\begin{equation}\label{Ekbardepro}e^{\bm{\bar{E}}_k}=\bm{\bar{U}}^{\top}\bm{U}_{k},\quad e^{\bm{\bar{F}}_k}=\bm{\bar{V}}^{\top}\bm{V}_{k}\quad {\rm and }\quad\mathcal{F}_{\bm{\bar{\Omega}}}(\bm{\sigma}_k, \bm{\bar{E}}_k, \bm{\bar{F}}_k)=\mathcal{F}(\bm{X}_k).\end{equation}
Finally, we set for simplicity \begin{equation}\label{akdef}a_k:=\mathcal{F}_{\bm{\bar{\Omega}}}(\bm{\sigma}_k, \bm{\bar{E}}_k, \bm{\bar{F}}_k)-F_{\bm{\bar{\Omega}}}(\bm{\bar{\sigma}},{\bm 0},{\bm 0})=\mathcal{F}(\bm{X}_k)-\mathcal{F}(\bar{\bm{X}}).\end{equation}

\begin{prop}\label{prop5}Let $\theta\in[0,1)$ be the   K{\L} exponent of $\mathcal{F}_{\bm{\bar{\Omega}}}$. There exist $N\in \N$ and positive numbers $\delta_2\in(0,\frac12)$, $a$, $b$ and $\mu$ (independent of $N$) such that for each $k\ge N$, if
\begin{equation}\label{condi}k\in\mathbb{N}_0\quad\mbox {and }\quad(\bm{\sigma}_{k},\bar{\bm{E}}_{k},\bar{\bm{F}}_{k})\in {\bf B}((\bm{\bar{\sigma}},\bm{0},\bm{0}),\delta_2),\end{equation}
  then $k+1\in\mathbb{N}_0$ and
the following assertions hold:

\begin{itemize}
\item[${\rm(i)}$]$\frac13\Vert \bm{E}_{k}\Vert_F\le\Vert\bm{\bar{E}}_{k+1}-\bm{\bar{E}}_{k}\Vert_F\le4\|\bm{E}_{k}\|_F$ and $\frac13\Vert \bm{F}_{k}\Vert_F\le\Vert\bm{\bar{F}}_{k+1}-\bm{\bar{F}}_{k}\Vert_F\le4\|\bm{F}_{k}\|_F$;
\item[${\rm(ii)}$]$\mathcal{F}_{\bm{\bar{\Omega}}}(\bm{\sigma}_{k+1},\bm{\bar{E}}_{k+1},\bm{\bar{F}}_{k+1})+a\Vert(\bm{\sigma}_{k+1}-\bm{\sigma}_{k},\bm{\bar{E}}_{k+1}-\bm{\bar{E}}_{k},\bm{\bar{F}}_{k+1}-\bm{\bar{F}}_{k})\Vert^2
\leq \mathcal{F}_{\bm{\bar{\Omega}}}(\bm{\sigma}_{k},\bm{\bar{E}}_{k},\bm{\bar{F}}_{k})$;
  \item [$ {\rm (iii)}$]${\rm d}(\bm{0},\partial \mathcal{F}_{\bm{\bar{\Omega}}}(\bm{\sigma}_{k+1},\bm{\bar{E}}_{k+1},\bm{\bar{F}}_{k+1}))\leq b \Vert(\bm{\sigma}_{k+1}-\bm{\sigma}_k,\bm{\bar{E}}_{k+1}-\bm{\bar{E}}_{k},\bm{\bar{F}}_{k+1}-\bm{\bar{F}}_{k})\Vert$;
  \item [${\rm(iv)}$]$6 \Vert(\bm{\sigma}_{k+1}-\bm{\sigma}_{k},\bm{\bar{E}}_{k+1}-\bm{\bar{E}}_{k},\bm{\bar{F}}_{k+1}-\bm{\bar{F}}_{k})\Vert\leq \mu(a_k^{1-\theta}-a_{k+1}^{1-\theta})+\Vert (\bm{\sigma}_{k}-\bm{\sigma}_{k-1},\bm{E}_{k-1},\bm{F}_{k-1})\Vert.$
\end{itemize}

\end{prop}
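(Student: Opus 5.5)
The plan is to transfer everything we know about the iterates in the moving frames $\bm{\Omega}_k$ (sufficient descent \eqref{H_1}, the relative-error bound of Lemma \ref{prop4-2}) into the fixed frame $\bar{\bm{\Omega}}$. The tool for this transfer is the logarithm coordinates $(\bar{\bm{E}}_k,\bar{\bm{F}}_k)$. Once the transfer is done, assertion (iv) follows from the standard K{\L} telescoping argument.

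We first choose $N$ and $\delta_2$. By Remark \ref{rmk:cluster} (iii), $\bm{E}_k,\bm{F}_k\to\bm{0}$ and $\bm{\sigma}_{k+1}-\bm{\sigma}_k\to\bm{0}$. So we take $N$ large enough that $\|\bm{E}_k\|_F$, $\|\bm{F}_k\|_F$ and $\|\bm{\sigma}_{k+1}-\bm{\sigma}_k\|$ are all below a small threshold for $k\ge N$. We take $\delta_2<\min\{\frac12,\delta_1\}$ with $\delta_1$ from Proposition \ref{lemma2}, and also small enough that Lemma \ref{lemmaexplog} (the local bi-Lipschitz properties of $\exp$ and $\log$ near $\bm{I}$) applies on the relevant neighbourhoods.

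Given \eqref{condi}, we have $\bar{\bm{U}}^\top\bm{U}_{k+1}=e^{\bar{\bm{E}}_k}\mathcal{C}(\bm{E}_k)$. Then
\[
\|\bar{\bm{U}}^\top\bm{U}_{k+1}-\bm{I}\|_F\le\|e^{\bar{\bm{E}}_k}-\bm{I}\|_F+\|\mathcal{C}(\bm{E}_k)-\bm{I}\|_F\le\|\bar{\bm{E}}_k\|_F+2\|\bm{E}_k\|_F<1
\]
by Lemma \ref{lemma3} (i). Hence $k+1\in\mathbb{N}_0$.

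For (i), we write
\[
\bar{\bm{E}}_{k+1}-\bar{\bm{E}}_k=\log(\bar{\bm{U}}^\top\bm{U}_{k+1})-\log(\bar{\bm{U}}^\top\bm{U}_k).
\]
The aim is to use two-sided Lipschitz estimates for $\log$ near $\bm{I}$ (Lemma \ref{lemmaexplog}). The differences of the arguments satisfy $\|\bar{\bm{U}}^\top(\bm{U}_{k+1}-\bm{U}_k)\|_F=\|\bm{U}_{k+1}-\bm{U}_k\|_F$, which lies between $\frac23\|\bm{E}_k\|_F$ and $2\|\bm{E}_k\|_F$ by Lemma \ref{lemma3} (i). The constants $2$ (upper) and $\frac12$ (lower) from the log estimates in a ball of radius $<1$ then give the factors $4$ and $\frac13$. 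The $\bm{F}$ part is identical.

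Assertion (ii) then follows from \eqref{H_1} together with \eqref{Ekbardepro}. The function values satisfy $\mathcal{F}_{\bar{\bm{\Omega}}}(\bm{\sigma}_j,\bar{\bm{E}}_j,\bar{\bm{F}}_j)=\mathcal{F}(\bm{X}_j)$. By (i), $\|(\bm{\sigma}_{k+1}-\bm{\sigma}_k,\bm{E}_k,\bm{F}_k)\|^2\ge\frac1{16}\|(\bm{\sigma}_{k+1}-\bm{\sigma}_k,\bar{\bm{E}}_{k+1}-\bar{\bm{E}}_k,\bar{\bm{F}}_{k+1}-\bar{\bm{F}}_k)\|^2$, so we may take $a=\alpha/16$.

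For (iii), we apply Lemma \ref{prop1add} with $\hat{\bm{\Omega}}=\bar{\bm{\Omega}}$ at the point $(\bm{\sigma}_{k+1},\bar{\bm{E}}_{k+1},\bar{\bm{F}}_{k+1})$. This requires $\|\bar{\bm{E}}_{k+1}\|_F,\|\bar{\bm{F}}_{k+1}\|_F\le\delta'<1$, which holds because $\|\bar{\bm{E}}_{k+1}\|_F\le\delta_2+4\|\bm{E}_k\|_F$. In this case $(\bar{\bm{U}}e^{\bar{\bm{E}}_{k+1}},\bar{\bm{V}}e^{\bar{\bm{F}}_{k+1}})=\bm{\Omega}_{k+1}$, so
\[
{\rm d}(\bm{0},\partial\mathcal{F}_{\bar{\bm{\Omega}}}(\cdot))\le\eta\,{\rm d}(\bm{0},\partial\mathcal{F}_{\bm{\Omega}_{k+1}}(\bm{\sigma}_{k+1},\bm{0},\bm{0}))\le\eta\beta\|(\bm{\sigma}_{k+1}-\bm{\sigma}_k,\bm{E}_k,\bm{F}_k)\|
\]
by Lemma \ref{prop4-2}. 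Converting back with (i) (factor $3$) gives $b=3\eta\beta$.

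For (iv), we use the usual concavity step. Write $\varphi(s)=\frac{\kappa}{1-\theta}s^{1-\theta}$, and let $d_{k+1}$ denote the norm of the increment on the left-hand side of (iv).

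If $a_{k+1}\le 0$, we instead use $a_k\ge a_{k+1}$ and handle that case directly, noting that $a_k\ge0$ by monotonicity; if $a_k=a_{k+1}$, then $d_{k+1}=0$ by (ii).

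Otherwise, to invoke Proposition \ref{lemma2} we need $(\bm{\sigma}_k,\bar{\bm{E}}_k,\bar{\bm{F}}_k)\in{\bf B}((\bar{\bm{\sigma}},\bm{0},\bm{0}),\delta_1)$ and $0<a_k<\epsilon$. The latter holds for $N$ large by Remark \ref{rmk:cluster} (ii). Then the K{\L} inequality \eqref{KLineq} at the point $k$, combined with (ii) and the bound ${\rm d}(\bm{0},\partial\mathcal{F}_{\bar{\bm{\Omega}}}(\bm{\sigma}_k,\bar{\bm{E}}_k,\bar{\bm{F}}_k))\le3\eta\beta\|(\bm{\sigma}_k-\bm{\sigma}_{k-1},\bm{E}_{k-1},\bm{F}_{k-1})\|$, gives
\[
a\,d_{k+1}^2\le a_k-a_{k+1}\le\frac{\varphi(a_k)-\varphi(a_{k+1})}{\varphi'(a_k)}\le\frac{3\eta\beta}{1-\theta}\,(a_k^{1-\theta}-a_{k+1}^{1-\theta})\,\kappa\,\|(\bm{\sigma}_k-\bm{\sigma}_{k-1},\bm{E}_{k-1},\bm{F}_{k-1})\|.
\]
That last bound on the subgradient distance is the same argument as (iii), applied one index earlier via Lemma \ref{prop4-2} and Lemma \ref{prop1add}.

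Now take square roots and apply $2\sqrt{xy}\le x+y$, with weights chosen to produce the coefficient $6$ on the left. This yields (iv) with $\mu=\frac{27\kappa\eta\beta}{a(1-\theta)}$ or a similar explicit constant.

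The main obstacle is (i). It needs precise two-sided Lipschitz bounds for $\log$ on orthogonal matrices near $\bm{I}$, with constants matching $\frac13$ and $4$, and the radius bookkeeping has to make every point stay inside the domain where those bounds hold. I would first check exactly which inequalities Lemma \ref{lemmaexplog} provides.
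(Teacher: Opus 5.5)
Your proposal is correct and follows the paper's proof essentially step for step: the same constants $a=\alpha/16$ and $b=3\eta\beta$, the same transfer via Lemma~\ref{lemma3}(i), Lemma~\ref{prop1add} and Lemma~\ref{prop4-2}, and the same K{\L}--concavity--AM--GM step for (iv). The only difference there is that the paper gets the slightly smaller $\mu=\frac{9\beta\kappa\eta}{a(1-\theta)}$, because the subgradient bound at index $k$ is already in terms of $\bm{E}_{k-1},\bm{F}_{k-1}$ and needs no extra factor $3$. The two-sided estimates you want for (i) are in Lemma~\ref{explogcon} rather than Lemma~\ref{lemmaexplog}: part (ii), the Lipschitz constant $2$ for $\log$, gives the upper bound $4\|\bm{E}_k\|_F$, and part (i), the Lipschitz bound for $\exp$, gives your constant $\frac12$ and hence the lower bound $\frac13\|\bm{E}_k\|_F$. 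The paper handles the radius bookkeeping with $\delta_2=\min\{\frac14\log 2,\delta_1\}$ and $\max\{\|\bm{E}_k\|_F,\|\bm{F}_k\|_F\}\le\frac1{16}\log 2$ for $k\ge N$.
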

\begin{proof}Let $\kappa>0$, $\epsilon\in(0,+\infty]$ and $\delta_1\in(0,1)$ have the properties stated in Proposition \ref{lemma2} with $\hat{\bm{\sigma}}=\bar{\bm{\sigma}}$ and $\hat{\bm{\Omega}}=\bar{\bm{\Omega}}$. 
Set $$ \delta_2:=\min\left\{\frac14{\rm log}2,\;\delta_1\right\}. $$
According to Lemma \ref{prop1add},  there exists $\eta>0$ such that the following implication holds for all $k\in\N$:
\begin{equation}\label{theimp}
(\bm{\sigma}_k,\bm{\bar{E}}_{k},\bm{\bar{F}}_{k})\in{\bf{B}}\left((\bm{\bar{\sigma}},{\bm 0},{\bm 0}),\frac12{\rm log}2\right)\subseteq\mathcal{M}
\Longrightarrow{\rm d}(\bm{0},\partial \mathcal{F}_{\bar{\bm{\Omega}}}(\bm{\sigma}_k,\bm{\bar{E}}_{k},\bm{\bar{F}}_{k})\le\eta{\rm d}(\bm{0},\partial \mathcal{F}_{\bm{{\Omega}}_k}(\bm{\sigma}_k,\bm{0},\bm{0})).
\end{equation}
Let $\alpha$, $\beta$ be determined in Lemma \ref{prop4} and Lemma \ref{prop4-2}, respectively.
Write $$a:=\frac{\alpha}{16},\quad b:=3\eta\beta\quad{\rm and}\quad\mu:=\frac{9\beta\kappa\eta}{a(1-\theta)}.$$
 By  Remark \ref{rmk:cluster} and the third equality in \eqref{Ekbardepro}, there exists $N\in\N$ such that for each $k\ge N$,
\begin{equation}\label{ekfkcon}\max\{\Vert\bm{E}_{k}\Vert_F,\;\Vert\bm{F}_{k}\Vert_F\}\le\frac{1}{16}{\rm log}2\quad{\rm and}\quad\mathcal{F}_{\bm{\bar{\Omega}}}(\bm{\bar{\sigma}},{\bm 0},{\bm 0})<\mathcal{F}_{\bm{\bar{\Omega}}}(\bm{\sigma}_k, \bm{\bar{E}}_k, \bm{\bar{F}}_k)<\mathcal{F}_{\bm{\bar{\Omega}}}(\bm{\bar{\sigma}},{\bm 0},{\bm 0})+\epsilon.\end{equation}
 Below we shall show that $\delta_2$,  $a$, $b$, $\mu$ and $ N$ are as desired.
For this end, let $k\ge N$ and suppose that $(\bm{\sigma}_{k},\bar{\bm{E}}_{k},\bar{\bm{F}}_{k})\in {\bf B}((\bm{\bar{\sigma}},\bm{0},\bm{0}),\delta_2)$.

(i) and (ii). We first note that assertion (ii) follows immediately from assertion (i), Proposition \ref{prop4}  and the definition of $a$. While, to prove assertion (i), it suffices to prove that $\bm{\bar{E}}_{k+1}$ is well defined and the first conclusion in assertion (i) holds.
In fact, by  Lemma \ref{lemma3} (i) and the first inequality in \eqref{ekfkcon}, we obtain
\begin{equation}\label{ukeku}\Vert\bm{\bar{U}}^\top({\bm U}_{k+1}-{\bm U}_{k})\Vert_F=\Vert{\bm U}_{k+1}-{\bm U}_{k}\Vert_F\le2\|\bm{E}_{k}\|_F\le\frac{1}{8}{\rm log}2.\end{equation}
Using \eqref{Ekbardepro}, Lemma \ref{lemmaexplog} (iv), (vi) and the fact that $\Vert\bar{\bm{E}}_{k}\Vert_F\le\delta_2\le\frac{1}{4}{\rm log}2$, one has
\begin{equation*}
\Vert\bm{\bar{U}}^{\top}\bm{U}_{k}-\bm{I}\Vert_F=\Vert e^{\bar{\bm{E}}_{k}}-\bm{I}\Vert_F\le\Vert\bar{\bm{E}}_{k}\Vert_F\le\frac{1}{4}{\rm log}2.
\end{equation*}
Combining this with \eqref{ukeku}, we have
\begin{equation}\label{H_1imp}3\Vert \bm{\bar{U}}^\top{\bm U}_{k}-{\bm I}\Vert_F+\Vert\bm{\bar{U}}^\top({\bm U}_{k+1}-{\bm U}_{k})\Vert_F<\frac12.\end{equation}
This particularly gives $\Vert\bm{\bar{U}}^{\top}\bm{U}_{k+1}-\bm{I}\Vert_F<\frac12$.
Hence  $\bar{\bm{E}}_{k+1}$  is well defined.
Moreover, in view of \eqref{H_1imp}, \eqref{Ekbarde} and applying  Lemma \ref{explogcon} (ii) (with $\bm{\bar{U}}^\top{\bm U}_{k}$, $\bm{\bar{U}}^\top({\bm U}_{k+1}-{\bm U}_{k})$ in place of ${\bm X}$ and $\Delta {\bm X}$, respectively), we get
\begin{equation*}
\Vert\bm{\bar{E}}_{k+1}-\bm{\bar{E}}_{k}\Vert_F=\Vert {\rm log}(\bm{\bar{U}}^\top{\bm U}_{k+1})-{\rm log}(\bm{\bar{U}}^\top{\bm U}_{k})\Vert_F\le2\Vert\bm{\bar{U}}^\top({\bm U}_{k+1}-{\bm U}_{k})\Vert_F.
\end{equation*}
This together with \eqref{ukeku} yields
\begin{equation}\label{ekupp}\Vert\bm{\bar{E}}_{k+1}-\bm{\bar{E}}_{k}\Vert_F\le4\|\bm{E}_{k}\|_F.\end{equation}
Hence, by the first inequality in \eqref{ekfkcon} and the fact that $\Vert \bm{\bar{E}}_{k}\Vert_F\le\delta_2\le\frac{1}{4}{\rm log}2$, one has
\begin{equation*}
2\Vert \bm{\bar{E}}_{k}\Vert_F+\Vert\bm{\bar{E}}_{k+1}-\bm{\bar{E}}_{k}\Vert_F\le\frac34{\rm log}2<1.
\end{equation*}
Thus, Lemma \ref{explogcon} (i) is applicable (with
 $\bm{X}:=\bm{\bar{E}}_{k}$
and $\Delta\bm{X}:=\bm{\bar{E}}_{k+1}-\bm{\bar{E}}_{k}$) and so,
$$\Vert{\bm U}_{k+1}-{\bm U}_{k}\Vert_F=\Vert\bm{\bar{U}}^\top({\bm U}_{k+1}-{\bm U}_{k})\Vert_F=\Vert e^{\bm{\bar{E}}_{k+1}}-e^{\bm{\bar{E}}_{k}}\Vert_F\le2\Vert e^{\bm{\bar{E}}_{k}}\Vert_F\Vert \bm{\bar{E}}_{k+1}-\bm{\bar{E}}_{k}\Vert_F=2\Vert \bm{\bar{E}}_{k+1}-\bm{\bar{E}}_{k}\Vert_F.$$
Therefore, by Lemma  \ref{lemma3} (i),  one sees $\frac13\Vert \bm{E}_{k}\Vert_F\le\Vert\bm{\bar{E}}_{k+1}-\bm{\bar{E}}_{k}\Vert_F$.  This, together with \eqref{ekupp}, shows the first conclusion in assertion (i).

(iii). Note  by \eqref{ekupp}, \eqref{ekfkcon} and the fact that $\Vert \bm{\bar{E}}_{k}\Vert_F\le\delta_2\le\frac{1}{4}{\rm log}2$, one has
$\Vert\bm{\bar{E}}_{k+1}\Vert_F\le\frac12{\rm log}2.$ Similarly, $\Vert\bm{\bar{F}}_{k+1}\Vert_F\le\frac12{\rm log}2.$
Then, using Lemma \ref{lemmaexplog} (vi) (with $\bm{X}$ replaced by $\bm{\bar{U}}^{\top}\bm{U}_{k+1}$ and $\bm{\bar{V}}^{\top}\bm{V}_{k+1}$, respectively), we know that $(\bm{\bar{E}}_{k+1},\bm{\bar{F}}_{k+1})\in \mathcal{S}(m)\times \mathcal{S}(n)$. Thus, implication  \eqref{theimp} is applicable (with $k+1$ in place of $k$) and so
$${\rm d}(\bm{0},\partial \mathcal{F}_{\bar{\bm{\Omega}}}(\bm{\sigma}_{k+1},\bm{\bar{E}}_{k+1},\bm{\bar{F}}_{k+1})\le\eta{\rm d}(\bm{0},\partial \mathcal{F}_{\bm{{\Omega}}_{k+1}}(\bm{\sigma}_{k+1},\bm{0},\bm{0})).$$
 Consequently, using Lemma \ref{prop4-2} (with $k+1$ in place of $k$) and assertion (i), one sees that assertion (iii) holds  as $b=3\eta\beta$.

(iv). Using Lemma \ref{lemmaexplog} (vi) (with $\bm{X}$ replaced by $\bm{\bar{U}}^{\top}\bm{U}_{k}$ and $\bm{\bar{V}}^{\top}\bm{V}_{k}$, respectively), we know that $(\bm{\bar{E}}_k,\bm{\bar{F}}_k)\in \mathcal{S}(m)\times \mathcal{S}(n)$. Then, in view of the definition of $\delta_2$, one sees that \eqref{theimp} is applicable and so,
$${\rm d}(\bm{0},\partial \mathcal{F}_{\bar{\bm{\Omega}}}(\bm{\sigma}_{k},\bm{\bar{E}}_{k},\bm{\bar{F}}_{k})\le\eta{\rm d}(\bm{0},\partial \mathcal{F}_{\bm{{\Omega}}_{k}}(\bm{\sigma}_{k},\bm{0},\bm{0})).$$
Thus, applying Proposition \ref{lemma2} (with  $\bar{\bm{\sigma}}$, $\bar{\bm{\Omega}}$, $\bm{\Omega}_k$, $(\bm{\sigma}_k, \bm{\bar{E}}_k, \bm{\bar{F}}_k)$ in place of $\hat{\bm{\sigma}}$, $\hat{\bm{\Omega}}$, $\bm{\Omega}$ and $ (\bm{\sigma}, \bm{E}, \bm{F})$ respectively), we have
$$( F_{\bm{\bar{\Omega}}}(\bm{\sigma}_k, \bm{\bar{E}}_k, \bm{\bar{F}}_k)-F_{\bm{\bar{\Omega}}}(\bm{\bar{\sigma}},{\bm 0},{\bm 0}))^{\theta}\le\kappa\eta{\rm d}(\bm{0},\partial F_{\bm{\Omega}_k}(\bm{\sigma}_k,\bm{0},\bm{0})).$$
Hence, thanks to Lemma \ref{prop4-2} and the definition of $a_k$, we conclude that
\begin{equation}\label{H2con}
a_k^{\theta}\le\beta\kappa\eta
\Vert(\bm{\sigma}_{k}-\bm{\sigma}_{k-1},\bm{E}_{k-1},\bm{F}_{k-1})\Vert.
\end{equation}
Since $(\cdot)^{1-\theta}$ is convex, we have by assertion (ii) and the definition of $\{a_k\}$ that
\begin{align}\label{phiykyk1}
a_k^{1-\theta}-a_{k+1}^{1-\theta}
&\geq (1-\theta)a_k^{-\theta}[\mathcal{F}_{\bm{\bar{\Omega}}}(\bm{\sigma}_{k},\bm{\bar{E}}_{k},\bm{\bar{F}}_{k})-\mathcal{F}_{\bm{\bar{\Omega}}}(\bm{\sigma}_{k+1},\bm{\bar{E}}_{k+1},\bm{\bar{F}}_{k+1})]\notag\\
&\geq a(1-\theta)a_k^{-\theta}\Vert(\bm{\sigma}_{k+1}-\bm{\sigma}_{k},\bm{\bar{E}}_{k+1}-\bm{\bar{E}}_{k},\bm{\bar{F}}_{k+1}-\bm{\bar{F}}_{k})\Vert^2.
\end{align}
Substituting \eqref{H2con} into \eqref{phiykyk1} and using the definition of $\mu$, we get
\begin{align*}
\Vert(\bm{\sigma}_{k+1}-\bm{\sigma}_{k},\bm{\bar{E}}_{k+1}-\bm{\bar{E}}_{k},\bm{\bar{F}}_{k+1}-\bm{\bar{F}}_{k})\Vert^2\le\frac{\mu}{9}( a_k^{1-\theta}- a_{k+1}^{1-\theta} )\Vert(\bm{\sigma}_{k}-\bm{\sigma}_{k-1},\bm{E}_{k-1},\bm{F}_{k-1})\Vert.
\end{align*}
This gives $$\begin{array}{lll}6\Vert(\bm{\sigma}_{k+1}-\bm{\sigma}_{k},\bm{\bar{E}}_{k+1}-\bm{\bar{E}}_{k},\bm{\bar{F}}_{k+1}-\bm{\bar{F}}_{k})\Vert&\le2\sqrt{\mu( a_k^{1-\theta}- a_{k+1}^{1-\theta} )\Vert(\bm{\sigma}_{k}-\bm{\sigma}_{k-1},\bm{E}_{k-1},\bm{F}_{k-1})\Vert}\\
&\le \mu( a_k^{1-\theta}- a_{k+1}^{1-\theta} )+\Vert(\bm{\sigma}_{k}-\bm{\sigma}_{k-1},\bm{E}_{k-1},\bm{F}_{k-1})\Vert,\end{array}$$
where the last inequality holds because of the fact that $2\sqrt{ab}\le a+b$ for any $a,b\ge0$.
Therefore, assertion (iv) is seen to hold and the proof is completed.
\end{proof}

The main theorem of this subsection is as follows which provides the sublinear  convergence property for the  sequence $\{\bm{X}_k\}$, together with $\{ \mathcal{F}(\bm{X}_k)\}$, generated by the proposed algorithm.
\begin{theorem}\label{thm2}The sequence $\{\bm{X}_k\}$ (resp. $\{(\bm{\sigma}_k,\bm{U}_k,\bm{V}_k)\}$) converges to $\bm{\bar{X}}$  (resp. $(\bm{\bar{\sigma}},\bm{\bar{U}},\bm{\bar{V}})$) with
$ \bm{0}\in\partial \mathcal{F}_{\bm{\bar{\Omega}}}(\bm{\bar{\sigma}},{\bm 0},{\bm 0})$. Moreover, there exist $\theta_1,\;\theta_2\in(1,+\infty)$ such that
  $$\mathcal{F}(\bm{X}_k)-\mathcal{F}(\bm{\bar{X}})=O(k^{-\theta_1})\quad{\rm and}\quad
\Vert\bm{X}_k-\bm{\bar{X}}\Vert=O(k^{-\theta_2})\quad{\rm for\; each}\quad k\ge1.$$
\end{theorem}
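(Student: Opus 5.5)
The argument is the standard K{\L} finite-length argument, run in the dynamic coordinates $(\bm{\sigma}_k,\bar{\bm{E}}_k,\bar{\bm{F}}_k)$ around the cluster point, using Proposition \ref{prop5} as the one-step engine.

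\textbf{Step 1: trapping.} Write $d_k:=\Vert(\bm{\sigma}_{k}-\bm{\sigma}_{k-1},\bm{E}_{k-1},\bm{F}_{k-1})\Vert$ and
\[
\bar d_{k+1}:=\Vert(\bm{\sigma}_{k+1}-\bm{\sigma}_{k},\bar{\bm{E}}_{k+1}-\bar{\bm{E}}_{k},\bar{\bm{F}}_{k+1}-\bar{\bm{F}}_{k})\Vert .
\]
By Proposition \ref{prop5}(i) there is an absolute constant $c$ with $d_{k+1}\le 3\bar d_{k+1}$ and $\bar d_{k+1}\le c\,d_{k+1}$.

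Since $(\bar{\bm{\sigma}},\bm 0,\bm 0)$ is a cluster point of $\{(\bm{\sigma}_k,\bar{\bm{E}}_k,\bar{\bm{F}}_k)\}$, and $a_k\downarrow 0$, $d_k\to 0$ (Remark \ref{rmk:cluster}), we can pick $K\ge N$ with the following properties: $K\in\mathbb{N}_0$; $\Vert(\bm{\sigma}_K,\bar{\bm{E}}_K,\bar{\bm{F}}_K)-(\bar{\bm{\sigma}},\bm 0,\bm 0)\Vert<\delta_2/4$; and $\mu a_K^{1-\theta}+d_K<\delta_2/2$.

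We then show by induction that \eqref{condi} holds for every $k\ge K$. Suppose it holds for $K,\dots,k$. Summing Proposition \ref{prop5}(iv) over $j=K,\dots,k$ and using $d_{j}\le 3\bar d_{j}$, the $d$-terms on the right are absorbed:
\[
6\sum_{j=K}^k\bar d_{j+1}\le \mu a_K^{1-\theta}+d_K+3\sum_{j=K+1}^{k}\bar d_j ,
\]
hence
\[
3\sum_{j=K}^k\bar d_{j+1}\le \mu a_K^{1-\theta}+d_K<\delta_2/2 .
\]
Therefore $\Vert(\bm{\sigma}_{k+1},\bar{\bm{E}}_{k+1},\bar{\bm{F}}_{k+1})-(\bar{\bm{\sigma}},\bm 0,\bm 0)\Vert<\delta_2$. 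Proposition \ref{prop5} also gives $k+1\in\mathbb{N}_0$, which closes the induction.

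\textbf{Step 2: convergence and stationarity.} The bound above gives $\sum_k \bar d_{k+1}<\infty$. So $(\bm{\sigma}_k,\bar{\bm{E}}_k,\bar{\bm{F}}_k)$ is Cauchy and converges to its cluster point $(\bar{\bm{\sigma}},\bm 0,\bm 0)$. Hence $\bm U_k=\bar{\bm U}e^{\bar{\bm E}_k}\to\bar{\bm U}$, $\bm V_k\to\bar{\bm V}$, and $\bm X_k\to\bar{\bm X}$.

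For stationarity, Proposition \ref{prop5}(iii) gives elements of $\partial\mathcal{F}_{\bar{\bm\Omega}}(\bm{\sigma}_{k+1},\bar{\bm{E}}_{k+1},\bar{\bm{F}}_{k+1})$ tending to $\bm 0$. Since $\mathcal{F}_{\bar{\bm\Omega}}(\bm{\sigma}_{k},\bar{\bm{E}}_{k},\bar{\bm{F}}_{k})=\mathcal{F}(\bm X_k)\to\mathcal{F}(\bar{\bm X})$ (Remark \ref{rmk:cluster}(ii)), closedness of the limiting subdifferential yields $\bm 0\in\partial\mathcal{F}_{\bar{\bm\Omega}}(\bar{\bm{\sigma}},\bm 0,\bm 0)$.

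\textbf{Step 3: rates.} Combine the K{\L} inequality with Proposition \ref{prop5}(ii),(iii) to get a recursion for $a_k$. From (iii) and the K{\L} property, $a_{k+1}^{2\theta}\le (\kappa b)^2\bar d_{k+1}^2$, and from (ii), $\bar d_{k+1}^2\le (a_k-a_{k+1})/a$. Hence
\[
a_{k+1}^{2\theta}\le C(a_k-a_{k+1}).
\]
The standard analysis of this recursion (Attouch--Bolte) gives $a_k=O(k^{-1/(2\theta-1)})$ when $\theta>1/2$, and linear (hence faster) decay when $\theta\le 1/2$. In all cases we may take some $\theta_1>1$; when $\theta>1/2$ this needs $\theta<1$, and the exponent $1/(2\theta-1)$ exceeds $1$.

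For the iterates, set $\Delta_k:=\sum_{j\ge k}\bar d_{j+1}$. The same telescoping as in Step 1, started at index $k$, gives $\Delta_k\lesssim a_k^{1-\theta}+d_k$. Then $\Vert\bm X_k-\bar{\bm X}\Vert\le L'\Delta_k$, using Lemma \ref{lemma3}(ii) together with $\Vert\bm E_j\Vert_F\le 3\Vert\bar{\bm E}_{j+1}-\bar{\bm E}_j\Vert_F$ from Proposition \ref{prop5}(i).

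Next bound $d_k$ via (ii): $d_k\lesssim \bar d_k\le\sqrt{(a_{k-1}-a_k)/a}\le \sqrt{a_{k-1}/a}$. This gives $\Delta_k\lesssim a_{k-1}^{\min\{1-\theta,1/2\}}$, which decays polynomially. A sharper version eliminates $d_k$ by the usual $\Delta_{k-1}-\Delta_k$ recursion $\Delta_k\le C(\Delta_{k-1}-\Delta_k)^{(1-\theta)/\theta}+C(\Delta_{k-1}-\Delta_k)$, yielding $O(k^{-(1-\theta)/(2\theta-1)})$.

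\textbf{Main obstacle.} I expect the main obstacle to be the exponent bookkeeping needed to claim $\theta_2>1$. One likely needs to argue that, for this problem, the K{\L} exponent can be taken so that $(1-\theta)/(2\theta-1)>1$, i.e.\ $\theta<2/3$; otherwise only a polynomial rate results. I would first try to absorb the cross term $d_k$ via the $\Delta$-recursion. Failing that, I would state the rate with whatever exponent $\theta_2$ the recursion gives. The trapping induction in Step 1 is routine once the indices are aligned.
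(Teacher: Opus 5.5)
This is essentially the paper's proof. The paper also shows by induction that \eqref{condi} holds for every $k\ge N$; it carries the auxiliary inequality \eqref{consid}, which is where the constants $\frac{\mu}{3}$ and $8$ in its choice of $N$ come from. It does not write out your Steps 2 and 3. Instead it feeds Proposition \ref{prop5}(ii),(iii) and the KL property at $(\bar{\bm\sigma},\bm 0,\bm 0)$ into \cite[Theorem 3.1, Theorem 3.4]{Frankel}. Your direct summation of Proposition \ref{prop5}(iv) from $K$, absorbing $d_j\le 3\bar d_j$, is a slightly tidier version of the same induction.

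The obstacle you flag is real, and your proposed remedy of lowering the KL exponent will not work.
\begin{itemize}
\item Proposition \ref{lemma2} only supplies some unknown $\theta\in[0,1)$.
\item A KL inequality with exponent $\theta$ yields one with any larger exponent near the critical value, but never a smaller one.
\item Nothing in the paper bounds $\theta$ below $\frac23$.
\end{itemize}
The rate theorem the paper cites gives exactly what you derived. For $\theta\le\frac12$ it gives linear convergence. For $\theta\in(\frac12,1)$ it gives $\mathcal{F}(\bm X_k)-\mathcal{F}(\bar{\bm X})=O(k^{-1/(2\theta-1)})$ and $\Vert\bm X_k-\bar{\bm X}\Vert=O(k^{-(1-\theta)/(2\theta-1)})$.

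So $\theta_1>1$ is justified. For $\theta\ge\frac23$, however, the paper's own argument only gives $\theta_2=(1-\theta)/(2\theta-1)\le 1$. The claim $\theta_2\in(1,+\infty)$ is therefore not supported by the paper's proof either. Stating the rate with $\theta_2=(1-\theta)/(2\theta-1)>0$, which is your fallback, is the conclusion this argument actually delivers.
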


\begin{proof}
Let $\delta_2\in(0,\;\frac12)$, $\mu$ and $N$ be determined by Proposition \ref{prop5}. To proceed, let $\rho\in(0,\;\delta_2)$.
Recall that  $(\bar{\bm{\sigma}}, \bm{0}, \bm{0})$ is a cluster point of $\{(\bm{\sigma}_k, \bm{\bar{E}}_k, \bm{\bar{F}}_k)\}$. Then, in view of Remark \ref{rmk:cluster} and \eqref{akdef},  we may assume without loss of generality that
\begin{align*}
& \frac{\mu}{3} a_{N}^{1-\theta}+\Vert(\bm{\sigma}_{N},\bar{\bm{E}}_{N},\bar{\bm{F}}_{N})-(\bm{\bar{\sigma}},\bm{0},\bm{0})\Vert+8\Vert (\bm{\sigma}_{N+1}-\bm{\sigma}_{N},\bm{E}_{N},\bm{F}_{N})\Vert<\rho.
\end{align*}
Below we will prove that \eqref{condi} holds for each $k\ge N$. Granting this, we have  by Proposition \ref{prop5} that assertions (ii) and (iii) in Proposition \ref{prop5} hold for each $k\ge N$. Thus, recalling that  $\mathcal{F}_{\bar{\bm{\Omega}}}$ has the   K{\L} property at $(\bm{\bar{\sigma}},{\bm 0},{\bm 0})$, the conclusion follows from  \cite[Theorem 3.1, Theorem 3.4]{Frankel}.
  To this end, we consider the following inequality:
 \begin{equation}\label{consid}
 \begin{aligned}
& \sum\limits_{i=N+1}^{k}\Vert (\bm{\sigma}_{i+1}-\bm{\sigma}_{i},\bar{\bm{E}}_{i+1}-\bar{\bm{E}}_{i},\bar{\bm{F}}_{i+1}-\bar{\bm{F}}_{i})\Vert+\Vert (\bm{\sigma}_{k+1}-\bm{\sigma}_{k},\bar{\bm{E}}_{k+1}-\bar{\bm{E}}_{k},\bar{\bm{F}}_{k+1}-\bar{\bm{F}}_{k})\Vert\\
  \leq\ & \frac{\mu}{3}(a_{N+1}^{1-\theta}-a_{k+1}^{1-\theta})+\Vert (\bm{\sigma}_{N+1}-\bm{\sigma}_{N},\bar{\bm{E}}_{N+1}-\bar{\bm{E}}_{N},\bar{\bm{F}}_{N+1}-\bar{\bm{F}}_{N})\Vert.
 \end{aligned}
 \end{equation}
Clearly, \eqref{condi}  and \eqref{consid} are trivial for $k=N$.

Assume that, for each $N\le k\le l$, \eqref{condi} and \eqref{consid} hold. Then,  Proposition \ref{prop5} is applicable for all $N\le k\le l$. In particular, $l+1\in\mathbb{N}_0$ and Proposition \ref{prop5} (i)  holds for $k=l$. Note that
\begin{equation}\label{enbar}
\begin{aligned}
\Vert(\bm{\sigma}_{l+1},\bar{\bm{E}}_{l+1},\bar{\bm{F}}_{l+1})-(\bm{\bar{\sigma}},\bm{0},\bm{0})\Vert&\le\Vert(\bm{\sigma}_{N},\bar{\bm{E}}_{N},\bar{\bm{F}}_{N})-(\bm{\bar{\sigma}},\bm{0},\bm{0})\Vert\\
&\quad +\sum\limits_{i=N+1}^{l}\Vert (\bm{\sigma}_{i+1}-\bm{\sigma}_{i},\bar{\bm{E}}_{i+1}-\bar{\bm{E}}_{i},\bar{\bm{F}}_{i+1}-\bar{\bm{F}}_{i})\Vert\\
&\quad +\Vert (\bm{\sigma}_{N+1}-\bm{\sigma}_{N},\bar{\bm{E}}_{N+1}-\bar{\bm{E}}_{N},\bar{\bm{F}}_{N+1}-\bar{\bm{F}}_{N})\Vert.
\end{aligned}
\end{equation}
Thanks to \eqref{consid} (with $k=l$) and the fact that $a_N\ge a_{N+1}$, we deduce
\begin{equation}\label{460k=l}\sum\limits_{i=N+1}^{l}\Vert (\bm{\sigma}_{i+1}-\bm{\sigma}_{i},\bar{\bm{E}}_{i+1}-\bar{\bm{E}}_{i},\bar{\bm{F}}_{i+1}-\bar{\bm{F}}_{i})\Vert<\frac{\mu}{3} a_{N}^{1-\theta}+\Vert (\bm{\sigma}_{N+1}-\bm{\sigma}_{N},\bar{\bm{E}}_{N+1}-\bar{\bm{E}}_{N},\bar{\bm{F}}_{N+1}-\bar{\bm{F}}_{N})\Vert.\end{equation}
Substituting \eqref{460k=l} into \eqref{enbar} and using Proposition \ref{prop5} (i) (with $k=N$), we obtain
$$
\Vert(\bm{\sigma}_{l+1},\bar{\bm{E}}_{l+1},\bar{\bm{F}}_{l+1})-(\bm{\bar{\sigma}},\bm{0},\bm{0})\Vert
< \frac{\mu}{3} a_{N}^{1-\theta}+\Vert(\bm{\sigma}_{N},\bar{\bm{E}}_{N},\bar{\bm{F}}_{N})-(\bm{\bar{\sigma}},\bm{0},\bm{0})\Vert+8\Vert (\bm{\sigma}_{N+1}-\bm{\sigma}_{N},\bm{E}_{N},\bm{F}_{N})\Vert,
$$
and so, by  the definition of $\rho$, one sees \eqref{condi} holds for $k=l+1$.
Moreover, applying Proposition \ref{prop5} (iv) (with $l+1$ in place of $k$), we have
$$ 6\Vert(\bm{\sigma}_{l+2}-\bm{\sigma}_{l+1},\bm{\bar{E}}_{l+2}-\bm{\bar{E}}_{l+1},\bm{\bar{F}}_{l+2}-\bm{\bar{F}}_{l+1})\Vert\leq \mu(a_{l+1}^{1-\theta}-a_{l+2}^{1-\theta})+\Vert (\bm{\sigma}_{l+1}-\bm{\sigma}_{l},\bm{E}_{l},\bm{F}_{l})\Vert.$$
This together with  Proposition \ref{prop5} (i) (with $k=l$) gives
$$2\Vert (\bm{\sigma}_{l+2}-\bm{\sigma}_{l+1},\bm{\bar{E}}_{l+2}-\bm{\bar{E}}_{l+1},\bm{\bar{F}}_{l+2}-\bm{\bar{F}}_{l+1})\Vert
\le\frac{\mu}{3}( a_{l+1}^{1-\theta}- a_{l+2}^{1-\theta} )+\Vert(\bm{\sigma}_{l+1}-\bm{\sigma}_{l},\bm{\bar{E}}_{l+1}-\bm{\bar{E}}_{l},\bm{\bar{F}}_{l+1}-\bm{\bar{F}}_{l})\Vert.$$
Adding this to \eqref{consid} (with $k=l$), one checks that  \eqref{consid} holds for $k=l+1$. Therefore, we  prove \eqref{condi} and \eqref{consid} hold for each $k\ge\N$ by mathematical induction and so, the proof is complete.
\end{proof}

Theorem \ref{thm2} demonstrates  that the sequence $\{(\bm{\sigma}_{k},\bm{E}_{k},\bm{F}_{k})\}$ generated by Algorithm \ref{alg1} converges to a stationary point of the auxiliary function $\mathcal{F}_{\hat{\bm{\Omega}}}$ defined in \eqref{Problem-eq}, rather than that of $\mathcal{F}$ involved in the original problem \eqref{Problem-Lp}. We will further show in Theorem \ref{remark}  that the generated sequence $\{\bm{X}_k\}$ converges to a stationary point of $\mathcal{F}_{\hat{\bm{\Omega}}}$, under the assumption that  all non-zero elements of $\bar{\bm{\sigma}}$ are distinct.  This requires  the following proposition, which will also facilitate the linear convergence analysis for the case
$p=1$ in the subsequent subsection. For convenience, let $\bm{U}^{>}_{X},\; \bm{V}^{>}_{X} $ denote the matrices composed of the left and right singular vectors, respectively,  corresponding to zero singular values of  $\bm{X}\in\mathbb{R}^{m\times n}$. For $\bm{\sigma}:=(\bm{\sigma}_1,\;\bm{\sigma}_2,\ldots,\bm{\sigma}_n)^\top\in\mathbb{R}^{n}$, we write
\begin{equation}\label{taude}\tau(\bm{\sigma}):=\sqrt{2}\max\left\{\frac{\max\limits_{i\in{\rm supp}(\bm{\sigma})}\{\bm{\sigma}_i\}}{\min\limits_{i,\;j\in {\rm supp}(\bm{\sigma})}\{\bm{\sigma}_{j}^{2}-\bm{\sigma}_{i}^{2}\}},\;\frac{1}{\min\limits_{i\in{\rm supp}(\bm{\sigma})}\{\bm{\sigma}_i\}}\right\}.\end{equation}

\begin{prop}\label{DT-v-w} Let $\bm{X}\in\mathbb{R}^{m\times n}$ be with the singular value decomposition $\bm{X}=\bm{U}^\top \mathcal{D}(\bm{\sigma})\bm{V}$. Suppose that
\begin{itemize}
  \item [{\rm(H)}] \text{non-zero  singular values  of } $\bm{X}$ \text{are  distinct, and} $\|\bm{U}^{>}_{X}\nabla f(\bm{X})(\bm{V}^{>}_{X})^\top\|_2\le\lambda$ \text{if} $p=1$.
\end{itemize}
Then it holds that
\begin{equation*}
{\rm d}(0,\partial \mathcal{F}(\bm{X}))\leq(2\tau(\bm{\sigma})+1){\rm d}(\bm{0},\partial \mathcal{F}_{\bm{\Omega}}(\bm{\sigma},\bm{0},\bm{0}));
\end{equation*}
in particular, $\bm{X}$ is a stationary point of $\mathcal{F}$ in the original problem \eqref{Problem-Lp} provided that $(\bm{\sigma},\bm{0},\bm{0})$ is a stationary point of the auxiliary function $\mathcal{F}_{\bm{\Omega}}$,  where $\bm{\Omega}=(\bm{U},\bm{V})$.
 \end{prop}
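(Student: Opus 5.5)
The plan is to rotate everything into the singular-vector frame of $\bm{X}$ and compare the two subdifferentials entry by entry. Write $r:=|{\rm supp}(\bm{\sigma})|$ and assume $\sigma_1>\cdots>\sigma_r>0=\sigma_{r+1}=\cdots$. Set $\bm{G}:=\nabla f(\bm{X})$ and $\bm{W}:=\bm{U}\bm{G}\bm{V}^\top\in\mathbb{R}^{m\times n}$.

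\emph{Rewriting both distances in the rotated frame.} By Proposition \ref{lemma-par} (ii),(iv) and the orthogonal invariance of $\|\cdot\|_F$,
\[
{\rm d}(0,\partial\mathcal{F}(\bm{X}))={\rm d}\Big(-\bm{W},\ \lambda\Big\{\mathrm{blockdiag}\big(\mathcal{D}(\partial\|\bm{\sigma}_+\|_p^p),\bm{W}^{(22)}\big):\|\bm{W}^{(22)}\|_2\le\gamma_p\Big\}\Big).
\]
For $p=0$ the block $\bm{W}^{(22)}$ is unconstrained. On the other side, Proposition \ref{lemma-par} (iii) with \eqref{fparsigma} gives three components:
\begin{itemize}
\item the $\bm{\sigma}$-component ${\rm diag}(\bm{W})+\lambda\partial\|\bm{\sigma}\|_p^p$;
\item the $\bm{E}$-component $\bm{U}^\top\bm{P}\bm{U}$ with $\bm{P}:=\tfrac12(\mathcal{D}\bm{W}^\top-\bm{W}\mathcal{D}^\top)$;
\item the $\bm{F}$-component $\bm{V}^\top\bm{Q}\bm{V}$ with $\bm{Q}:=\tfrac12(\mathcal{D}^\top\bm{W}-\bm{W}^\top\mathcal{D})$.
\end{itemize}
Here $\mathcal{D}:=\mathcal{D}(\bm{\sigma})$. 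Hence ${\rm d}(\bm{0},\partial\mathcal{F}_{\bm{\Omega}}(\bm{\sigma},\bm{0},\bm{0}))^2=\|\bm{P}\|_F^2+\|\bm{Q}\|_F^2+{\rm d}^2(-{\rm diag}(\bm{W}),\lambda\partial\|\bm{\sigma}\|_p^p)$. Denote the square root of this quantity by $\Delta$.

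\emph{Choosing a subgradient of $\mathcal{F}$.} I will build one element of $\partial\mathcal{F}(\bm{X})$ and bound its norm. It is split into four groups of entries of $\bm{W}$.
\begin{itemize}
\item \emph{Diagonal entries $i\le r$.} Choose the subgradient entry nearest to $-W_{ii}$. The total error is bounded by the $\bm{\sigma}$-part of $\Delta$.
\item \emph{Zero block $\bm{W}^{(22)}$ (indices $i>r$, $j>r$, including its diagonal).} Take $\bm{W}^{(22)}:=-\lambda^{-1}[\bm{W}]_{22}$, which produces zero error. For $p\in[0,1)$ this is admissible because $\gamma_p=+\infty$. For $p=1$ it is admissible exactly because of the second part of (H): $[\bm{W}]_{22}=\bm{U}^{>}_X\bm{G}(\bm{V}^{>}_X)^\top$, so $\|[\bm{W}]_{22}\|_2\le\lambda$.
\item \emph{Off-diagonal entries in the support block.} The remaining error is $\|\bm{W}\|$ restricted to $i\neq j$ with $i\le r$ or $j\le r$.
\item \emph{Mixed entries (one index $\le r$, the other $>r$).} Treated below together with the support-block entries.
\end{itemize}

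\emph{Bounding the off-diagonal and mixed entries.} This is the main step; I expect it to be the only real obstacle. For $i\ne j\le r$, $P_{ij}=\tfrac12(\sigma_iW_{ji}-\sigma_jW_{ij})$ and $Q_{ij}=\tfrac12(\sigma_iW_{ij}-\sigma_jW_{ji})$. This is a $2\times2$ linear system in $(W_{ij},W_{ji})$ with determinant $\sigma_i^2-\sigma_j^2\neq0$, by the first part of (H) (distinct nonzero singular values). Inverting it gives
\[
|W_{ij}|+|W_{ji}|\le \frac{2(\sigma_i+\sigma_j)}{|\sigma_i^2-\sigma_j^2|}\big(|P_{ij}|+|Q_{ij}|\big),
\]
and this coefficient is controlled by the first term of $\tau(\bm{\sigma})$, up to the $\sqrt2$ bookkeeping. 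For the mixed entries with $i\le r<j$: $P_{ij}=\tfrac12\sigma_iW_{ji}$ when $j\le m$, and $Q_{ij}=\tfrac12\sigma_iW_{ij}$ when $j\le n$. Hence $|W_{ji}|\le 2|P_{ij}|/\sigma_i$ and $|W_{ij}|\le 2|Q_{ij}|/\sigma_i$, which is governed by the second term $1/\min\sigma_i$ in $\tau(\bm{\sigma})$. Each pair $\{(i,j),(j,i)\}$ is counted once, via the skew-symmetry of $\bm{P}$ and $\bm{Q}$. Summing squares and using Cauchy--Schwarz, the aim is to get an off-diagonal error of at most $2\tau(\bm{\sigma})\sqrt{\|\bm{P}\|_F^2+\|\bm{Q}\|_F^2}$. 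Care is needed with the constants (the $\sqrt2$ in $\tau$ absorbs the $(a+b)^2\le2(a^2+b^2)$ step).

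\emph{Conclusion.} Adding the diagonal error and the off-diagonal error by the triangle inequality gives ${\rm d}(0,\partial\mathcal{F}(\bm{X}))\le\Delta+2\tau(\bm{\sigma})\Delta=(2\tau(\bm{\sigma})+1)\Delta$. The stationarity statement follows by taking $\Delta=0$.
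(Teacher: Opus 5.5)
Your proof is correct. It runs the paper's argument in the dual direction.

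\textbf{The paper's route.} It starts from an arbitrary $\bm{G}\in\partial\mathcal{F}_{\bm{\Omega}}(\bm{\sigma},\bm{0},\bm{0})$ and invokes the chain rule $\partial\mathcal{F}_{\bm{\Omega}}(\bm{\sigma},\bm{0},\bm{0})=({\rm D}\mathcal{T}_{\bm{\Omega}})^*\partial\mathcal{F}(\bm{X})$ to write $\bm{G}=({\rm D}\mathcal{T}_{\bm{\Omega}})^*\overline{\bm{W}}$ with $\overline{\bm{W}}\in\partial\mathcal{F}(\bm{X})$. It then zeroes the $(2,2)$ block using (H), exactly as you do. Next it builds a right inverse $\bm{Y}$ of ${\rm D}\mathcal{T}_{\bm{\Omega}}(\bm{\sigma},\bm{0},\bm{0})$ on matrices with zero $(2,2)$ block, by solving $\bm{H}\mathcal{D}(\bm{\sigma})-\mathcal{D}(\bm{\sigma})\bm{K}=$ (off-diagonal part). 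The bound comes from $\|\bm{W}\|_F^2=\langle\overline{\bm{W}},\bm{W}\rangle=\langle\bm{G},\bm{Y}\rangle\le\|\bm{G}\|\,\|\bm{Y}\|$.

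\textbf{Your route.} You compute both distances explicitly in the rotated frame and invert the adjoint map $\bm{W}\mapsto(\bm{P},\bm{Q})$ on the off-diagonal entries. Your $2\times 2$ systems (determinant $\sigma_i^2-\sigma_j^2$) and your $1/\sigma_i$ bounds on mixed entries are the transposes of the paper's. That is why the same $\tau(\bm{\sigma})$ appears.

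\textbf{What each buys.} Your route is more self-contained: it uses only the formulas of Proposition~\ref{lemma-par}. The paper's chain-rule identity is cited from Rockafellar--Wets, Theorem~10.6, whose horizon-subgradient qualification condition fails for $p<1$ whenever ${\rm rank}(\bm{X})<n$ and $m\ge{\rm rank}(\bm{X})+2$. The reason is that nonzero horizon subgradients $\bm{U}^\top{\rm blockdiag}(\bm{0},\bm{Z})\bm{V}$ with ${\rm diag}(\bm{Z})=\bm{0}$ lie in the kernel of $({\rm D}\mathcal{T}_{\bm{\Omega}})^*$. So that identity must be checked directly from the explicit formulas anyway, which is essentially your first step. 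Also, your diagonal and off-diagonal errors sit on disjoint entries, so adding them in quadrature gives the sharper factor $\max\{1,2\tau(\bm{\sigma})\}$. The paper's route is more structural: it reduces the constant to the norm of a right inverse of ${\rm D}\mathcal{T}_{\bm{\Omega}}$.

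\textbf{One point on constants.} The $\ell_1$ estimate you wrote for $|W_{ij}|+|W_{ji}|$, squared and combined with $(|P_{ij}|+|Q_{ij}|)^2\le 2(P_{ij}^2+Q_{ij}^2)$, only gives $2\sqrt{2}\,\tau(\bm{\sigma})$. To get $2\tau(\bm{\sigma})$, use Cramer's rule exactly:
$W_{ij}=2(\sigma_jP_{ij}+\sigma_iQ_{ij})/(\sigma_i^2-\sigma_j^2)$ and $W_{ji}=2(\sigma_iP_{ij}+\sigma_jQ_{ij})/(\sigma_i^2-\sigma_j^2)$.
Hence $W_{ij}^2+W_{ji}^2\le 4(P_{ij}^2+Q_{ij}^2)/(\sigma_i-\sigma_j)^2$. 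Moreover $1/|\sigma_i-\sigma_j|=(\sigma_i+\sigma_j)/|\sigma_i^2-\sigma_j^2|\le\sqrt{2}\,\tau(\bm{\sigma})$, reading the first term of $\tau$ as a minimum of $|\sigma_i^2-\sigma_j^2|$ over $i\neq j$. Since each unordered pair appears twice in $\|\bm{P}\|_F^2+\|\bm{Q}\|_F^2$, the support block contributes at most $4\tau(\bm{\sigma})^2$ times its share. The mixed entries contribute at most $\tau(\bm{\sigma})^2$ times theirs, since $W_{ji}^2\le 4P_{ij}^2/\sigma_i^2\le\tau(\bm{\sigma})^2(P_{ij}^2+P_{ji}^2)$.
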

\begin{proof}Without loss of generality, we may assume that  \begin{equation}\label{sigmaassump}\bm{\sigma}_1>\cdots>\bm{\sigma}_r>\bm{\sigma}_{r+1}=\cdots=\bm{\sigma}_{n}=0.\end{equation}In fact, letting $\bm{\sigma}'$ be a  permutation of $\bm{\sigma}$,
there exist two permutation matrices $\bm{P_U}\in\mathbb{R}^{m\times m}$ and $\bm{P_V}\in\mathbb{R}^{n\times n}$ such that $\mathcal{D}(\bm{\sigma})=\bm{P_U}^\top\mathcal{D}(\bm{\sigma}')\bm{P_V}$. Then,
by \eqref{fparsigma}, one checks that $ \nabla_{\bm{\sigma}} f_{\bm{\Omega}}(\bm{\sigma}',\bm{0},\bm{0})=\bm{P_U}\nabla_{\bm{\sigma}} f_{\bm{\Omega}}(\bm{\sigma},\bm{0},\bm{0})\bm{P_V}^\top$,
$\nabla_{\bm{E}}f_{\bm{\Omega}}(\bm{\sigma}',\bm{0},\bm{0})=\nabla_{\bm{E}}f_{\bm{\Omega}}(\bm{\sigma},\bm{0},\bm{0})$ and $\nabla_{\bm{F}}f_{\bm{\Omega}}(\bm{\sigma}',\bm{0},\bm{0})=\nabla_{\bm{F}}f_{\bm{\Omega}}(\bm{\sigma},\bm{0},\bm{0})$, which implies
${\rm d}(\bm{0},\partial \mathcal{F}_{\bm{\Omega}}(\bm{\sigma}',\bm{0},\bm{0}))={\rm d}(\bm{0},\partial \mathcal{F}_{\bm{\Omega}}(\bm{\sigma},\bm{0},\bm{0}))$.
Thus, the assumption \eqref{sigmaassump}  is reasonable and so we can write 
$\bm{U}:=((\bm{U}^{=}_X)^\top, (\bm{U}^{>}_X)^\top) ^\top$ and $\bm{V}:=((\bm{V}^{=}_X)^\top, (\bm{V}^{>}_X)^\top) ^\top$ where  $\bm{U}^{>}_X\in \mathbb{R}^{(m-r)\times m}$ and $\bm{V}^{>}_X\in \mathbb{R}^{(n-r)\times n}$.

Let $\bm{G}\in\partial \mathcal{F}_{\bm{\Omega}}(\bm{\sigma},\bm{0},\bm{0})$. It suffices to show that there exists $\bm{W}\in\partial  \mathcal{F}(\bm{X})$ such that
\begin{equation}\label{eq-chainrule-con}
\|{\bm W}\|_F\leq(2\tau(\bm{\sigma})+1)\|\bm{G}\|_F.
\end{equation}
To proceed, we define the mapping $\mathcal{T}_{\bm{\Omega}}:\mathcal{M}\rightarrow \mathbb{R}^{m\times n}$ by
\begin{equation*}
\mathcal{T}_{\bm{\Omega}}(\bm{\sigma},\bm{E},\bm{F}):=(\bm{U}e^{\bm{E}})^\top \mathcal{D}(\bm{\sigma}) \bm{V}e^{\bm{F}}\quad{\rm for \;each}\;(\bm{\sigma},\bm{E},\bm{F})\in\mathcal{M}.
\end{equation*}
Then, one sees $\mathcal{F}_{\bm{\Omega}}=\mathcal{F}\circ \mathcal{T}_{\bm{\Omega}}$.
Thus, we check by the chain rule \cite[Theorem 10.6]{rockafellar2009variational} that
$\partial \mathcal{F}_{\bm{\Omega}}(\bm{\sigma},\bm{0},\bm{0})=({\rm D}\mathcal{T}_{\bm{\Omega}}(\bm{\sigma},\bm{0},\bm{0}))^* \partial \mathcal{F}(\bm{X})$.
That is, there exists $\overline{\bm{W}}\in \partial  \mathcal{F}(\bm{X})$ such that
\begin{equation}\label{eq-chainrule}
({\rm D}\mathcal{T}_{\bm{\Omega}}(\bm{\sigma},\bm{0},\bm{0}))^* (\overline{\bm{W}})=\bm{G}.
\end{equation}Note by Proposition \ref{lemma-par} (ii) that
\begin{equation*}
\begin{array}{lll}
\partial \mathcal{F}(\bm{X})= \left\{\bm{U}^\top\left(
                  \begin{array}{cc}
                    \bm{U}^{=}_X\nabla f(\bm{X})(\bm{V}^{=}_X)^\top+\lambda  \mathcal{D}(\partial\Vert \bm{\sigma}_{+}(\bm{X})\Vert_{p}^p) &  \bm{U}^{=}_X\nabla f(\bm{X})(\bm{V}^{>}_X)^\top \\
                    \bm{U}^{>}_X\nabla f(\bm{X})(\bm{V}^{=}_X)^\top & \bm{U}^{>}_X\nabla f(\bm{X})(\bm{V}^{>}_X)^\top+\lambda\bm W^{(22)} \\
                  \end{array}
                \right)
 \bm{V}\right\},
\end{array}
\end{equation*}
where  $\|\bm W^{(22)}\|_2\le\gamma_p$.
Thus, we may assume that
$\overline{\bm{W}}:=\bm{U}^\top\scriptsize\left[\begin{array}{cc}
\overline{\bm{W}}^{(11)} & \overline{\bm{W}}^{(12)} \\
\overline{\bm{W}}^{(21)} & \overline{\bm{W}}^{(22)}
\end{array}\right]\bm{V}$, and then $\bm{W}:=\bm{U}^\top\scriptsize\left[\begin{array}{cc}
\overline{\bm{W}}^{(11)} & \overline{\bm{W}}^{(12)} \\
\overline{\bm{W}}^{(21)} & \bm{0}
\end{array}\right]\bm{V}\in\partial  \mathcal{F}(\bm{X})$
by the fact that $\|\bm{U}^{>}_X\nabla f(\bm{X})(\bm{V}^{>}_X)^\top\|_2\le\lambda\gamma_p$ (see assumption (H) and the definition of $\gamma_p$ in \eqref{gammadef}).

Below we will show that $\bm{W}$ is as desired. To  this end, we claim that there exists $\bm{Y}\in\mathcal{M}$  such that
  \begin{equation}\label{lemma411claim}
  \nabla \mathcal{T}_{\bm{\Omega}}(\bm{\sigma},\bm{0},\bm{0})\bm{Y}=\bm{W}\quad{\rm and}\quad\|\bm{Y}\|\le(2\tau(\bm{\sigma})+1)\|{\bm W}\|_F.
  \end{equation}
Granting this,  we deduce by \eqref{eq-chainrule} that
\begin{align*}
\langle\bm{G},\; \bm{Y}\rangle
=\langle \nabla \mathcal{T}_{\bm{\Omega}}(\bm{\sigma},\bm{0},\bm{0})^*(\overline{\bm{W}}),\; \bm{Y}\rangle
=\langle\overline{\bm{W}},\;\nabla \mathcal{T}_{\bm{\Omega}}(\bm{\sigma},\bm{0},\bm{0})\bm{Y}\rangle
=\langle\overline{\bm{W}},\;\bm{W}\rangle=\|\bm{W}\|_F^2.
\end{align*}
This gives
\begin{align*}
\|\bm{W}\|_F^2\le\|\bm{G}\|_F\|\bm{Y}\|\leq(2\tau(\bm{\sigma})+1)\|{\bm W}\|_F\|\bm{G}\|_F.
\end{align*}
Hence \eqref{eq-chainrule-con} is seen to hold and the proof is completed.
To proceed, using the component form, one checks that  there exist unique $\bm{H}\in\mathcal{S}(m)$ and $\bm{K}\in\mathcal{S}(n)$ with $\bm{H}_{i j}=0$ ($r+1 \leq i,\; j \leq m$) and $\bm{K}_{i j}=0$ ($r+1 \leq i,\; j \leq n$) such that \begin{equation}\label{wcomponent}\bm{H} \mathcal{D}(\bm{\sigma})-\mathcal{D}(\bm{\sigma})\bm{K}+\mathcal{D}({\rm diag}(\bm{U}\bm{W}\bm{V}^\top))=\bm{U}\bm{W}\bm{V}^\top.\end{equation}
This particularly implies
$\|\bm{H}\|_F\leq\tau(\bm{\sigma})\|\bm{W}\|_F$, $\|\bm{K}\|_F\leq\tau(\bm{\sigma})\|\bm{W}\|_F$.
Set $\Delta \bm{\sigma}:={\rm diag}(\bm{U}\bm{W}\bm{V}^\top)$,  $\Delta \bm{E}:=\bm{U}^\top\bm{H}^\top\bm{U}$,
$\Delta \bm{F}:=-\bm{V}^\top\bm{K}\bm{V}$ and $\bm{Y}:=(\Delta\bm{\sigma},\Delta \bm{E},\Delta \bm{F})$. Then, $\bm{Y}$ satisfies the  inequality in \eqref{lemma411claim}. Furthermore, we check by definitions that
  \begin{align*}
  \nabla \mathcal{T}_{\bm{\Omega}}(\bm{\sigma},\bm{0},\bm{0})\bm{Y}
  &=(\bm{U}\Delta \bm{E})^\top \mathcal{D}(\bm{\sigma}) \bm{V}+\bm{U}^\top \mathcal{D}(\Delta \bm{\sigma}) \bm{V}+\bm{U}^\top \mathcal{D}(\bm{\sigma})\bm{V}\Delta \bm{F}.
  \end{align*}
 Thus, by \eqref{wcomponent} and the definition of $(\Delta \bm{\sigma},\;\Delta \bm{E},\;\Delta \bm{F})$, one has that $\bm{Y}$ satisfies the  equality in \eqref{lemma411claim}. Hence,
claim \eqref{lemma411claim} holds and so the proof is completed.
\end{proof}

The following theorem is a direct consequence of  Theorem \ref{thm2} and Proposition \ref{DT-v-w},  which shows that
$\{\bm{X}_k\}$ converges to a stationary point of the function $\mathcal{F}$ involved in the original problem \eqref{Problem-Lp}.
\begin{theorem}\label{remark}  Suppose that the assumption (H) holds  with $\bar{\bm{\sigma}}$, $\bar{\bm{X}}$, $\bm{U}^{>}_{\bar{X}}$, $\bm{V}^{>}_{\bar{X}}$ in place of $\bm{\sigma}$, $\bm{X}$, $\bm{U}^{>}_{X}$,  $\bm{V}^{>}_{X}$.
Then the generated sequence $\{\bm{X}_k\}$ converges to a stationary point of $\mathcal{F}$.

 \end{theorem}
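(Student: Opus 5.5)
The plan is to combine Theorem~\ref{thm2} with Proposition~\ref{DT-v-w} applied at the limit point. By Theorem~\ref{thm2}, $\{\bm{X}_k\}$ converges to $\bar{\bm{X}}$ and $(\bm{\sigma}_k,\bm{U}_k,\bm{V}_k)\to(\bar{\bm{\sigma}},\bar{\bm{U}},\bar{\bm{V}})$, with $\bm{0}\in\partial\mathcal{F}_{\bar{\bm{\Omega}}}(\bar{\bm{\sigma}},\bm{0},\bm{0})$. Since each $\bm{U}_k,\bm{V}_k$ is orthogonal, so are the limits $\bar{\bm{U}},\bar{\bm{V}}$. Passing to the limit in $\bm{X}_k=\bm{U}_k^\top\mathcal{D}(\bm{\sigma}_k)\bm{V}_k$ gives $\bar{\bm{X}}=\bar{\bm{U}}^\top\mathcal{D}(\bar{\bm{\sigma}})\bar{\bm{V}}$. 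It remains only to show that $(\bar{\bm{\sigma}},\bar{\bm{\Omega}})$ may serve as the decomposition $(\bm{\sigma},\bm{\Omega})$ in Proposition~\ref{DT-v-w}, and then to read off the ``in particular'' clause: since ${\rm d}(\bm{0},\partial\mathcal{F}_{\bar{\bm{\Omega}}}(\bar{\bm{\sigma}},\bm{0},\bm{0}))=0$, we get ${\rm d}(0,\partial\mathcal{F}(\bar{\bm{X}}))\le(2\tau(\bar{\bm{\sigma}})+1)\cdot 0=0$. Because $\partial\mathcal{F}(\bar{\bm{X}})$ is closed, by the explicit description in Proposition~\ref{lemma-par} (ii),(iv), it follows that $0\in\partial\mathcal{F}(\bar{\bm{X}})$.

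The point requiring care is that $\bar{\bm{\sigma}}$ need not be a genuine singular value vector: it may have negative entries and need not be sorted. Proposition~\ref{DT-v-w} is stated for an SVD, but its proof already reduces an arbitrary permutation of $\bm{\sigma}$ to the ordered case via permutation matrices, under which ${\rm d}(\bm{0},\partial\mathcal{F}_{\bm{\Omega}}(\cdot,\bm{0},\bm{0}))$ is invariant. Negative signs are handled the same way. Set $\bm{S}:={\rm diag}({\rm sign}[\bar{\bm{\sigma}}]_i)$ with sign $1$ at zeros, and replace $\bar{\bm{V}}$ by the orthogonal matrix $\bm{S}\bar{\bm{V}}$ and $\bar{\bm{\sigma}}$ by $|\bar{\bm{\sigma}}|$. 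This leaves $\bar{\bm{X}}$ unchanged. The explicit formulas \eqref{fparsigma} then show that $\nabla_{\bm{E}},\nabla_{\bm{F}}$ are unchanged and that $\nabla_{\bm{\sigma}}$ is multiplied entrywise by the signs. The subdifferential $\partial\|\cdot\|_p^p$ is sign-equivariant componentwise by Proposition~\ref{lemma-par}~(i). Hence stationarity of $(\bar{\bm{\sigma}},\bm{0},\bm{0})$ for $\mathcal{F}_{\bar{\bm{\Omega}}}$ transfers to stationarity of $(|\bar{\bm{\sigma}}|,\bm{0},\bm{0})$ for $\mathcal{F}_{(\bar{\bm{U}},\bm{S}\bar{\bm{V}})}$, and after a permutation we have a bona fide SVD of $\bar{\bm{X}}$.

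Next, I would verify hypothesis (H) for this SVD. By assumption, the nonzero entries of $\bar{\bm{\sigma}}$ give distinct nonzero singular values; I read ``distinct'' as applying to the absolute values, which is what the assumption of Theorem~\ref{remark} is meant to encode. In the case $p=1$, the condition $\|\bm{U}^{>}_{\bar X}\nabla f(\bar{\bm{X}})(\bm{V}^{>}_{\bar X})^\top\|_2\le\lambda$ is assumed directly, and it is unaffected by the sign change because $\bm{V}^{>}$ only changes by signs of rows. With (H) in hand, Proposition~\ref{DT-v-w} applies with $\tau(|\bar{\bm{\sigma}}|)<\infty$, and the conclusion follows as described above.

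I expect the main obstacle to be this bookkeeping with signs and ordering, namely ensuring that the stationarity condition is truly invariant under the change of representation. In particular, for $p\in(0,1)$ the subgradient formula ${\rm sign}(\sigma_i)|\sigma_i|^{p-1}$ has to match the sign factor picked up by $\nabla_{\bm{\sigma}}$. I would check this componentwise using Proposition~\ref{lemma-par}~(i). Everything else is a direct invocation of earlier results.
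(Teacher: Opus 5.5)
Your proposal is correct and takes the same route as the paper, which states Theorem~\ref{remark} as a direct consequence of Theorem~\ref{thm2} ($\bm{X}_k\to\bar{\bm{X}}=\bar{\bm{U}}^\top\mathcal{D}(\bar{\bm{\sigma}})\bar{\bm{V}}$ with $\bm{0}\in\partial\mathcal{F}_{\bar{\bm{\Omega}}}(\bar{\bm{\sigma}},\bm{0},\bm{0})$) combined with the ``in particular'' clause of Proposition~\ref{DT-v-w}. Your sign-and-permutation normalization (replacing $\bar{\bm{\sigma}}$ by $|\bar{\bm{\sigma}}|$ and $\bar{\bm{V}}$ by $\bm{S}\bar{\bm{V}}$, then permuting) is a sound way to turn $(\bar{\bm{\sigma}},\bar{\bm{\Omega}})$ into a genuine SVD so that Proposition~\ref{DT-v-w} applies; the paper leaves this detail implicit.
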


\subsection{Linear Convergence for $p=1$}
In this subsection, we analyze the convergence properties of   Algorithm \ref{alg1} for the special case $p=1$. 
Our analysis relies on the strict complementarity-type regularity condition
\[0\in {\rm ri}\partial \mathcal{F}(\bar{\bm X})\]
at the optimal solution $\bar{\bm X}$, which is also  used in the forward-backward splitting algorithms for solving nuclear norm regularized least-squares problems; see \cite{liang2014local,zhou2017unified,drusvyatskiy2018error} for details.

Recall from Theorem \ref{thm2} that
$\{\bm{X}_k\}$ (resp. $\{(\bm{\sigma}_k,\bm{U}_k,\bm{V}_k)\}$) converges to $\bm{\bar{X}}$  (resp. $(\bm{\bar{\sigma}},\bm{\bar{U}},\bm{\bar{V}})$). In this subsection, we assume that all the non-zero elements of $\bm{\bar{\sigma}}$ are distinct. As in the beginning  of the proof for Proposition \ref{DT-v-w},  we assume without loss of generality that
\begin{equation}\label{sigmabarass}\bm{\bar{\sigma}}_1>\cdots>\bm{\bar{\sigma}}_r>\bm{\bar{\sigma}}_{r+1}=\cdots=\bm{\bar{\sigma}}_{n}=0.\end{equation}
For the following lemma, we write
 $\bar{\bm{U}}:=((\bar{\bm{U}}^{(1)})^\top,\; (\bar{\bm{U}}^{(2)})^\top)^\top$, $\bar{\bm{V}}:=((\bar{\bm{V}}^{(1)})^\top,\; (\bar{\bm{V}}^{(2)})^\top)^\top$
and for each $k\in \N$,  $\bm{U}_k=(( \bm{U}^{(1)}_k)^\top, (\bm{U}^{(2)}_k)^\top )^\top$, $\bm{V}_k=(( \bm{V}^{(1)}_k)^\top, (\bm{V}^{(2)}_k)^\top)^\top$
where  $\bar{\bm{U}}^{(2)},\;\bm{U}^{(2)}_k\in \mathbb{R}^{(m-r)\times m}$ and $\bar{\bm{V}}^{(2)},\;\bm{V}^{(2)}_k\in \mathbb{R}^{(n-r)\times n}$.

\begin{lemma}\label{lemma-supp}Suppose  that $0\in {\rm ri}\partial \mathcal{F}(\bar{\bm X})$. Then
there exists  $N\in\mathbb{N}$ such that
\begin{equation}\label{uklambda}\|(\bm{U}^{(2)}_k)^\top\nabla f(\bm{X}_k)\bm{V}^{(2)}_k\|_2\le\lambda\quad{\rm and}\quad{\rm supp}(\bm{\sigma}_k)={\rm supp}(\bm{\bar{\sigma}})\quad{\rm for\; each }\quad k\ge N.\end{equation}
\end{lemma}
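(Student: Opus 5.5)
The plan is to convert the regularity condition $0\in{\rm ri}\,\partial\mathcal{F}(\bar{\bm X})$ into a \emph{strict} inequality on the ``null block'' of $\nabla f(\bar{\bm X})$, and then pass it to the iterates using the convergence from Theorem \ref{thm2} and the lower stepsize bound of Remark \ref{sktkbound}. Throughout, $p=1$, so $\gamma_p=1$. I read the matrix in \eqref{uklambda} as the $(m-r)\times(n-r)$ block $\bm{U}^{(2)}_k\nabla f(\bm{X}_k)(\bm{V}^{(2)}_k)^\top$, which is what the dimensions force.

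\textbf{Step 1 (unpacking the condition at $\bar{\bm X}$).} By Proposition \ref{lemma-par} (ii) and (iv) and the ordering \eqref{sigmabarass}, $\partial\mathcal{F}(\bar{\bm X})$ is the image of the spectral-norm unit ball $\{\bm W^{(22)}:\|\bm W^{(22)}\|_2\le 1\}$ under an injective affine map. This map is
\[
\bm W^{(22)}\mapsto \nabla f(\bar{\bm X})+\lambda\bar{\bm U}^\top{\rm diag}\big(\mathcal{D}({\rm sign}(\bar{\bm\sigma}_+)),\bm W^{(22)}\big)\bar{\bm V}.
\]
Relative interiors commute with affine maps, and the relative interior of that ball is the open ball $\|\bm W^{(22)}\|_2<1$. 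Hence $0\in{\rm ri}\,\partial\mathcal{F}(\bar{\bm X})$ means that some $\bm W^{(22)}$ with $\|\bm W^{(22)}\|_2<1$ makes the expression vanish. Reading off the blocks after conjugating by $\bar{\bm U},\bar{\bm V}$, I expect to obtain two facts. First, $[{\rm diag}(\bar{\bm U}\nabla f(\bar{\bm X})\bar{\bm V}^\top)]_i=-\lambda\,{\rm sign}(\bar{\bm\sigma}_i)$ for $i\le r$. Second, there is $\varepsilon>0$ with
\[
\|\bar{\bm U}^{(2)}\nabla f(\bar{\bm X})(\bar{\bm V}^{(2)})^\top\|_2\le\lambda-2\varepsilon .
\]
I expect this to be the main point requiring care: one must justify that the relative interior of the subdifferential is exactly the strict-norm set. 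The injectivity of the affine map handles this.

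\textbf{Step 2 (the spectral-norm inequality).} By Theorem \ref{thm2}, $(\bm\sigma_k,\bm U_k,\bm V_k)\to(\bar{\bm\sigma},\bar{\bm U},\bar{\bm V})$ and $\bm X_k\to\bar{\bm X}$. The map $(\bm U,\bm V,\bm X)\mapsto\bm U^{(2)}\nabla f(\bm X)(\bm V^{(2)})^\top$ is continuous by \eqref{fXnabla}. So for all large $k$ the norm is at most $\lambda-\varepsilon<\lambda$, which gives the first part of \eqref{uklambda}.

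\textbf{Step 3 (support identification).} For $i\le r$, $\bar{\bm\sigma}_i>0$ and $\bm\sigma_k\to\bar{\bm\sigma}$, so $[\bm\sigma_k]_i\neq0$ for large $k$. For $i>r$, I will use the explicit form of \eqref{eq:sigma} when $p=1$, namely soft thresholding. Writing $\bm g_k:=\nabla_{\bm\sigma}f_{\bm\Omega_k}(\bm\sigma_k,\bm0,\bm0)={\rm diag}(\bm U_k\nabla f(\bm X_k)\bm V_k^\top)$, we have $[\bm\sigma_{k+1}]_i=0$ if and only if $|[\bm\sigma_k]_i-t_k[\bm g_k]_i|\le\lambda t_k$. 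For $i>r$, the entry $[\bm g_k]_i$ is a diagonal entry of the block in Step 2, so $|[\bm g_k]_i|\le\lambda-\varepsilon$ for large $k$. Moreover $[\bm\sigma_k]_i\to0$, and $t_k\ge\underline t$ by Remark \ref{sktkbound}. Therefore, once $|[\bm\sigma_k]_i|\le\varepsilon\underline t$,
\[
|[\bm\sigma_k]_i-t_k[\bm g_k]_i|\le\varepsilon t_k+t_k(\lambda-\varepsilon)=\lambda t_k,
\]
and so $[\bm\sigma_{k+1}]_i=0$. Taking $N$ to be the maximum of the finitely many thresholds from Steps 2 and 3, shifted by one, gives ${\rm supp}(\bm\sigma_k)={\rm supp}(\bar{\bm\sigma})$ for all $k\ge N$.
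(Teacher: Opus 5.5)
Your proof is correct, and Steps 1--2 follow the paper's argument. The paper also reads the block in \eqref{uklambda} as $\bm{U}^{(2)}_k\nabla f(\bm{X}_k)(\bm{V}^{(2)}_k)^\top$ in its proof. It obtains $\|\bar{\bm{U}}^{(2)}\nabla f(\bar{\bm{X}})(\bar{\bm{V}}^{(2)})^\top\|_2<\lambda$ from Proposition \ref{lemma-par} (ii) and passes to the iterates by convergence.

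The difference is in the support identification. The paper argues by contradiction. Suppose $[\bm{\sigma}_{k_i}]_{i_0}\neq 0$ along a subsequence for some $i_0>r$. The optimality condition of \eqref{eq:sigma} then gives $\pm\lambda+[\nabla_{\bm{\sigma}}f_{\bm{\Omega}_{k_i-1}}(\bm{\sigma}_{k_i-1},\bm{0},\bm{0})+t_{k_i-1}^{-1}(\bm{\sigma}_{k_i}-\bm{\sigma}_{k_i-1})]_{i_0}=0$. Letting $i\to\infty$, with $t_k\ge\underline{t}$ and $\bm{\sigma}_k-\bm{\sigma}_{k-1}\to\bm{0}$, yields $|[\nabla_{\bm{\sigma}}f_{\bar{\bm{\Omega}}}(\bar{\bm{\sigma}},\bm{0},\bm{0})]_{i_0}|=\lambda$. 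This contradicts the strict bound that Lemma \ref{diag2norm} gives at the limit point.

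You instead apply the diagonal-versus-spectral-norm bound at the iterates, using the uniform margin $\lambda-\varepsilon$ from Step 2. You combine it with the explicit soft-thresholding form of \eqref{eq:sigma}. This is direct rather than by contradiction, and it gives an explicit identification criterion: $|[\bm{\sigma}_k]_i|\le\varepsilon\underline{t}$ forces $[\bm{\sigma}_{k+1}]_i=0$.

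The paper's version uses only the optimality condition and $\partial|\cdot|$ at nonzero points, so it does not depend on a closed-form proximal map. Yours is shorter and quantitative. Both rest on the same two ingredients: the strict complementarity margin and the stepsize lower bound of Remark \ref{sktkbound}.
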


\begin{proof}By Proposition \ref{lemma-par} (ii), one sees
$${\rm ri}\partial \Vert \bar {\bm{X} }\Vert_{S_1}=\left\{\bar{\bm{U}}^\top \left(
                      \begin{array}{cc}
                        \bm{I} & \bm{0} \\
                        \bm{0}  & \bm W^{(22)} \\
                      \end{array}
                    \right)
\bar{\bm{V}}:\bm W^{(22)}\in\mathbb{R}^{(m-r)\times(n-r)}, \;\|\bm W^{(22)}\|_2<1\right\}.$$
Note by assumption that $-\nabla f(\bar{\bm{X}})\in\lambda{\rm ri}\partial \Vert\bar{\bm{X}} \Vert_{S_1}$ (see Proposition \ref{lemma-par} (ii)).
Therefore, one has
\begin{equation}\label{ri}
-\bar{\bm U}\nabla f(\bar{\bm{X}})\bar{\bm V}^\top\in
\lambda\left\{\left(
                      \begin{array}{cc}
                        \bm {I} & \bm{0} \\
                        \bm{0}  & \bm W^{(22)} \\
                      \end{array}
                    \right)
:\bm W^{(22)}\in\mathbb{R}^{(m-r)\times(n-r)}, \;\|\bm W^{(22)}\|_2<1\right\}.
\end{equation}
Using the block form of $\bar{\bm U}$ and $\bar{\bm V}$, we have
\begin{equation*}
\bar{\bm U}\nabla f(\bar{\bm{X}})\bar{\bm V}^\top=\left(
                                                               \begin{array}{cc}
                                                                 \bar{\bm U}^{(1)}\nabla  f(\bar{\bm{X}})(\bar{\bm V}^{(1)} )^\top & \bar{\bm U}^{(1)}\nabla  f(\bar{\bm{X}})(\bar{\bm V}^{(2)} )^\top \\
                                                                 \bar{\bm U}^{(2)}\nabla  f(\bar{\bm{X}})(\bar{\bm V}^{(1)} )^\top & \bar{\bm U}^{(2)}\nabla  f(\bar{\bm{X}})(\bar{\bm V}^{(2)} )^\top \\
                                                               \end{array}
                                                             \right).
                                                            \end{equation*}
This together with \eqref{ri} implies
\begin{equation}\label{u2}\|\bar{\bm U}^{(2)}\nabla  f(\bar{\bm{X}})(\bar{\bm V}^{(2)} )^\top\|_2<\lambda.\end{equation}
Since $\{(\bm{\sigma}_k,\bm{U}_k,\bm{V}_k)\}$  converges to $(\bm{\bar{\sigma}},\bm{\bar{U}},\bm{\bar{V}})$, there exists $N\in\mathbb{N}$ such that the first assertion  in \eqref{uklambda} holds.

To prove the existence of $N$ such that the second assertion in \eqref{uklambda} holds,  we recall from Proposition \ref{prop1} (i) that
$\nabla_{\bm{\sigma}}f_{\bar{\bm{\Omega}}}(\bar{\bm{\sigma}},\bm{0},\bm{0})={\rm diag}\left(\bar {\bm{U}}\nabla f(\bar{\bm{X}})\bar{\bm{V}}^\top\right).$ Then, by Lemma \ref{diag2norm} and \eqref{u2}, one has
\begin{equation}\label{fomegari}|[\nabla_{\bm{\sigma}}f_{\bar{\bm{\Omega}}}(\bar{\bm{\sigma}},\bm{0},\bm{0})]_i|\le\|\bar{\bm U}^{(2)}\nabla  f(\bar{\bm{X}})(\bar{\bm V}^{(2)} )^\top\|_2<\lambda\quad{\rm for \;each}\quad r< i\le n,\end{equation}
Now we suppose on the contrary that  there exist $i_0>r$ and $\{k_i\}$ such that for each $i\in\N$,
$[{\bm \sigma_{k_i}}]_{i_0}\neq0$. By  \eqref{eq:sigma}
and the optimality condition, we have
\begin{equation}\label{knparti}0\in\nabla_{\bm{\sigma}} f_{\bm{\Omega}_{k_i-1}}(\bm{\sigma}_{k_i-1},\bm{0},\bm{0})+\frac{1}{t_{k_i-1}}(\bm{\sigma}_{k_i}-\bm{\sigma}_{k_i-1})+\lambda\partial\|\bm{\sigma}_{k_i}\|_1.\end{equation}
 Without loss of generality, we may assume that $[{\bm\sigma_{k_i}}]_{i_0}>0$. Then, \eqref{knparti} particularly implies
 \begin{equation*}
 \lambda+[\nabla_{\bm{\sigma}} f_{\bm{\Omega}_{k_i-1}}(\bm{\sigma}_{k_i-1},\bm{0},\bm{0})+\frac{1}{t_{k_i-1}}(\bm{\sigma}_{k_i}-\bm{\sigma}_{k_i-1})]_{i_0}=0.
\end{equation*}
Passing to the limit, one checks $$\lambda+[\nabla_{\bm{\sigma}} f_{\bar{\bm{\Omega}}}(\bar{\bm{\sigma}},\bm{0},\bm{0})]_{i_0}=0$$
as $\lim \limits_{i\rightarrow\infty}\nabla_{\bm{\sigma}} f_{\bm{\Omega}_{k_i-1}}(\bm{\sigma}_{k_i-1},\bm{0},\bm{0})=\nabla_{\bm{\sigma}} f_{\bar{\bm{\Omega}}}(\bar{\bm{\sigma}},\bm{0},\bm{0})$ (see \eqref{fparsigma}) and $\lim\limits_{i\rightarrow\infty}\frac{1}{t_{k_i-1}}(\bm{\sigma}_{k_i}-\bm{\sigma}_{k_i-1})=0$ (see Remark \ref{rmk:cluster} and Lemma  \ref{sktkbound}).
It is contradictory  with  \eqref{fomegari} and so there  exists  $N\in\mathbb{N}$ such that for each $k\ge N$,
the second assertion in \eqref{uklambda} holds. The proof is complete.
\end{proof}

\begin{theorem}\label{thm2p=1}
Suppose  that $0\in {\rm ri}\partial \mathcal{F}(\bar{\bm X})$.
Then $\{\bm{X}_k\}$ (resp. $\{\mathcal{F}(\bm{X}_k)\}$) converges linearly to $\bm{\bar{X}}$  (resp. $\mathcal{F}(\bm{\bar{X}}$)).
\end{theorem}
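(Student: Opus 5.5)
The argument follows the proof of Theorem \ref{thm2}, but with K{\L} exponent $\theta=\frac12$ for $\mathcal{F}_{\bar{\bm{\Omega}}}$ near $(\bar{\bm{\sigma}},\bm{0},\bm{0})$ along the iterates. With $\theta=\frac12$, Proposition \ref{prop5} (ii)--(iii) and the standard argument of \cite[Theorem 3.4]{Frankel} (or Attouch--Bolte) give Q-linear convergence of $a_k=\mathcal{F}(\bm{X}_k)-\mathcal{F}(\bar{\bm{X}})$. They also give R-linear convergence of $\sum_{i\ge k}\Vert(\bm{\sigma}_{i+1}-\bm{\sigma}_i,\bar{\bm{E}}_{i+1}-\bar{\bm{E}}_i,\bar{\bm{F}}_{i+1}-\bar{\bm{F}}_i)\Vert$. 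Lemma \ref{lemma3} (ii) together with Proposition \ref{prop5} (i) then transfers this to $\Vert\bm{X}_k-\bar{\bm{X}}\Vert_F$. So everything reduces to a K{\L} inequality with exponent $\frac12$ valid at all large $k$:
\[
a_k^{1/2}\le \kappa'\,{\rm d}(\bm{0},\partial\mathcal{F}_{\bm{\Omega}_k}(\bm{\sigma}_k,\bm{0},\bm{0})).
\]

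To prove it, I will use Lemma \ref{lemma-supp}: for $k\ge N$, ${\rm supp}(\bm{\sigma}_k)={\rm supp}(\bar{\bm{\sigma}})=\{1,\dots,r\}$, and the $(2,2)$ block of $\nabla f(\bm{X}_k)$ has spectral norm at most $\lambda$. Restricted to the manifold $\mathfrak{A}_r=\{\bm{\sigma}:\sigma_i=0,\ i>r\}\times\mathcal{S}(m)\times\mathcal{S}(n)$, with signs fixed near $\bar{\bm{\sigma}}$, the function $\mathcal{F}_{\bar{\bm{\Omega}}}$ is smooth: $f_{\bar{\bm{\Omega}}}+\lambda\sum_{i\le r}\sigma_i$.

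On this set, $\mathcal{F}(\bm{X})=f(\bm{X})+\lambda\Vert\bm{X}\Vert_*$ equals $f(\bm{X})+\lambda\langle \bar{\bm{U}}^{(1)\top}\bar{\bm{V}}^{(1)}\ \text{transported},\bm{X}\rangle$ on the fixed-rank-$r$ manifold. The key structural point is that $\mathcal{F}$ is convex for $p=1$. Under strict complementarity $0\in{\rm ri}\,\partial\mathcal{F}(\bar{\bm{X}})$, the standard error bound for nuclear-norm regularized least squares \cite{zhou2017unified,drusvyatskiy2018error} gives
\[
\mathcal{F}(\bm{X})-\mathcal{F}(\bar{\bm{X}})\le c\,{\rm d}^2(\bm{0},\partial\mathcal{F}(\bm{X}))
\]
near $\bar{\bm{X}}$; this is K{\L} with exponent $\frac12$ for $\mathcal{F}$. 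I then pull this back: by Proposition \ref{DT-v-w}, whose hypothesis (H) holds at $\bm{X}_k$ for $k\ge N$ (distinct nonzero singular values by continuity from \eqref{sigmabarass}, and the block bound from Lemma \ref{lemma-supp}),
\[
{\rm d}(\bm{0},\partial\mathcal{F}(\bm{X}_k))\le(2\tau(\bm{\sigma}_k)+1)\,{\rm d}(\bm{0},\partial\mathcal{F}_{\bm{\Omega}_k}(\bm{\sigma}_k,\bm{0},\bm{0})).
\]
Here $\tau(\bm{\sigma}_k)$ is uniformly bounded because $\bm{\sigma}_k\to\bar{\bm{\sigma}}$ with fixed support and gaps bounded away from zero. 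Combining this with $a_k=\mathcal{F}(\bm{X}_k)-\mathcal{F}(\bar{\bm{X}})$ and Lemma \ref{prop4-2} yields
\[
a_k\le C\Vert(\bm{\sigma}_k-\bm{\sigma}_{k-1},\bm{E}_{k-1},\bm{F}_{k-1})\Vert^2 .
\]
Together with the sufficient descent \eqref{H_1}, this gives
\[
a_k-a_{k+1}\ge\alpha\Vert\cdot\Vert_{k+1}^2\ge(\alpha/C)\,a_{k+1},
\]
so $a_{k+1}\le \frac{C}{C+\alpha}a_k$, which is Q-linear decrease of $a_k$ directly. Linear convergence of $\bm{X}_k$ then follows by summing
\[
\Vert(\bm{\sigma}_{k+1}-\bm{\sigma}_k,\bm{E}_k,\bm{F}_k)\Vert\le\sqrt{a_k/\alpha}
\]
geometrically and applying Lemma \ref{lemma3} (ii).

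The main obstacle is the error bound step: ensuring that the K{\L} exponent $\frac12$ for the convex $\mathcal{F}$ under $0\in{\rm ri}\,\partial\mathcal{F}(\bar{\bm{X}})$ is available. It holds for $\mathcal{A}$ arbitrary only via the cited results for the nuclear-norm problem, namely a Luo--Tseng type error bound under strict complementarity. A secondary subtlety is that $\bar{\bm{X}}$ must actually be a minimizer, so that $\mathcal{F}(\bm{X}_k)\ge\mathcal{F}(\bar{\bm{X}})$ and $a_k\ge0$. This holds here because the condition implies $0\in\partial\mathcal{F}(\bar{\bm{X}})$, and by convexity $\bar{\bm{X}}$ is then a global minimizer.
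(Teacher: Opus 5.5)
Your proposal is correct and is essentially the paper's proof. Lemma \ref{lemma-supp} gives (H) at $\bm{X}_k$; Proposition \ref{DT-v-w}, with $\tau(\bm{\sigma}_k)$ bounded, together with Lemma \ref{prop4-2} gives ${\rm d}(0,\partial\mathcal{F}(\bm{X}_k))\le\tau\Vert(\bm{\sigma}_k-\bm{\sigma}_{k-1},\bm{E}_{k-1},\bm{F}_{k-1})\Vert$; the K{\L} exponent $\frac12$ of the convex $\mathcal{F}$ comes from the cited error-bound results; and sufficient descent then yields $a_{k+1}\le\frac{C}{C+\alpha}a_k$, with the geometric summation through Lemma \ref{lemma3} (ii). Your opening detour through Proposition \ref{prop5} and the fixed-rank manifold is never used in the final argument and can be dropped.
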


\begin{proof} By Lemma \ref{lemma-supp}, there exists  $N\in\mathbb{N}$ such that for each $k\ge N$, \eqref{uklambda} holds. Note that $\lim\limits_{k\rightarrow\infty}\tau(\bm{\sigma}_{k})=\tau(\bm{\bar{\sigma}})$ by \eqref{taude}, \eqref{sigmabarass} and the fact that  $\lim\limits_{k\rightarrow\infty}\bm{\sigma}_{k}=\bm{\bar{\sigma}}$.
Thus, applying Proposition \ref{DT-v-w} (to $\bm{X}_k$ in place of $\bm{X}$) and using Lemma \ref{prop4-2}, there exists a positive constant $\tau$ such that
\begin{equation}\label{12kl}
{\rm d}(0,\partial \mathcal{F}(\bm{X}_{k}))\leq \tau\Vert (\bm{\sigma}_{k}-\bm{\sigma}_{k-1},\bm{E}_{k-1},\bm{F}_{k-1})\Vert\quad{\rm for\; each}\quad k\ge N.
\end{equation}
Recall that $\{\mathcal{F}(\bm{X}_k)\}$ monotonically decreases and converges to $\mathcal{F}(\bm{\bar{X}})$.
According to \cite[Proposition 12]{zhou2017unified} and \cite[Theorem 4.1]{li2018calculus}, one sees that $\mathcal{F}$ has the   K{\L} property
at $\bm{\bar{X}}$ with the exponent $\frac12$. This, together with \eqref{akdef}, implies that there exists positive constant $\upsilon$ such that $$a_k\le\upsilon{\rm d}^2(0, \partial \mathcal{F}(\bm{X}_k))\quad{\rm for\; each}\quad k\ge0.$$
Combining this  with  \eqref{12kl}  and using Proposition \ref{prop4}, we derive
$$a_k\le\upsilon\tau^2\Vert (\bm{\sigma}_{k}-\bm{\sigma}_{k-1},\bm{E}_{k-1},\bm{F}_{k-1})\Vert^2\le\frac{\upsilon\tau^2}{\alpha}\left(\mathcal{F}(\bm{X}_{k-1})- \mathcal{F}(\bm{X}_{k})\right)\quad{\rm for\; each}\quad k\ge N.$$ Thus, defining
$\omega:=\frac{\upsilon\tau^2}{\alpha+\upsilon\tau^2}$, we deduce
\begin{equation}\label{fxkp=1}a_k\le\omega a_{k-1}\le\cdots\le\omega^{k}a_0\quad{\rm for\; each}\quad k\ge N.\end{equation}
This shows the linear convergence  of $\{\mathcal{F}(\bm{X}_k)\}$.

To show the linear convergence of $\{\bm{X}_{k}\}$, let $k\ge N$. By Lemma \ref {lemma3} (ii), Proposition \ref{prop4}, Remark \ref{rmk:cluster} (ii) and \eqref{fxkp=1}, we have
$$\Vert \bm{X}_{k+1}-\bm{X}_{k}\Vert_F\le\frac{L}{\alpha}(\mathcal{F}(\bm{X}_k)-\mathcal{F}(\bm{X}_{k+1}))^{\frac12}\leq \frac{L}{\alpha}a_k^{\frac12}\le\frac{L}{\alpha}a_0^{\frac12}\omega^{\frac{k}{2}}.$$
Thus,  for all $ l\ge1$, one has
$$\|\bm{X}_{k+1}-\bm{X}_{k+l}\|_F\leq\sum_{j=1}^{l-1}\Vert \bm{X}_{k+j}-\bm{X}_{k+j+1}\Vert_F\leq \frac{L}{\alpha}a_0^{\frac12}\sum_{j=1}^{l-1}\omega^{\frac{k+j}{2}}=\frac{L}{\alpha}a_0^{\frac12}\frac{\omega^{\frac{k+1}{2}}(1-\omega^{\frac{l-1}{2}})}{1-\omega^{\frac12}}.$$
Passing to the limit, one sees that $\{\bm{X}_{k}\}$ converges linearly to $\bm{\bar{X}}$ and so,  the proof is completed.
\end{proof}

\section{Numerical Experiments}\label{sec:numerical}

In this section, we present numerical experiments to evaluate the convergence performance of our proposed algorithm (i.e., DPGA)  for solving the Schatten-$p$ quasi-norm regularized least-squares problem \eqref{Problem-Lp}. We also compare our  algorithm with the fixed point iterative algorithm \cite{Ma2011,peng2017s,Peng2018} (rewritten as FPIA for
convenience), which is based on the classical proximal gradient method and performs an SVD at each iteration.
The numerical experiments are implemented in MATLAB R2013a and the hardware environment is 12th Gen Intel Core i7-12700, @2.10 GHz, 32GB of RAM.

The simulation data are generated as follows. The sensing matrix $\mathcal{A}\in \mathbb{R}^{m \times m}$ and the observation vector $\bm{b}\in\mathbb{R}^{m\times n}$ in problem \eqref{Problem-Lp} are randomly generated 
as sparse matrices by MATLAB's function \texttt{sprand} and possess sparsity levels of $1\%$ and $10\%$, respectively, i.e.,
$$\mathcal{A}:=\texttt{sprand}(m,m,0.01)\quad{\rm and}\quad\bm{b}:=\texttt{sprand}(m,n,0.1).$$
The regularization parameter  $\lambda$ is set to $\lambda=0.01*(m+n).$ We investigate the following cases for $p$: $p=0,\; 1/2,\;2/3,\; 1$ respectively.

In the implementation of DPGA,  the stepsize strategies (i) and (ii) discussed in Subsection \ref{stepsizes} are employed, and the associated DPGAs are denoted by DPGA-I and DPGA-II, respectively. The subproblem \eqref{eq:sigma}  is  explicitly computed for $p = 0,\; 1/2,\; 2/3,\;1$ by using the closed-form solution as described in \cite{Daubechies04,Blumensath08,XuZB12, XuZB23}, and the equations \eqref{eq:U} and \eqref{eq:V} are solved using MATLAB's matrix division function \texttt{mldivide}. For FPIA,  the compact singular value decomposition function \texttt{svd($\cdot$, 'econ')} is utilized to perform the SVD.
The stopping criterion for all algorithms is set as
\begin{description}
  \item[-] the maximum number of iterations is greater than 50000, or
  \item[-] $\operatorname{d}(0,\partial \mathcal{F}(\bm{X}_k))\leq\epsilon$, where $\epsilon$  is a positive number related to $m$, $n$ and $p$ (see Tables \ref{DPGAI-II}--\ref{DPGAI-I-FPIA-P=1}).
\end{description}
Finally, in all experiments, the initial point $\bm{X}_0$ is selected as $\bm{X}_0:=\bm{U}_0^\top \bm{D}(\bm{\sigma}_0)\bm{V}_0=\texttt{sprand}(m,n,0.01)$, and  each experiment consists of 10 independent runs. The performance of algorithms is evaluated by:
\begin{description}
  \item[-]
  Num.: the number of runs out of 10 in which the condition $\operatorname{d}(0,\partial \mathcal{F}(\bm{X}_k))\leq\epsilon$ is satisfied within $50000$ steps and no interruption is encountered due to SVD non-convergence
(specifically, the MATLAB error \texttt{"SVD did not converge"}, which may typically arise from numerical instability and the ill-conditioning of the sparse sensing matrix).
  \item[-] Tim.: the averaged CPU time (in seconds)  over Num. runs;
  \item[-] Rel.: the averaged relative error $\|\bm{X}_k-\bm{X}_{k-1}\|_F/\max\{\|\bm{X}_{k-1}\|_F,1\}$  over Num. runs;
  \item[-] Dis.: the averaged optimality measure $\operatorname{d}(0,\partial \mathcal{F}(\bm{X}_k))$ over Num. runs;
  \item[-] Fva.: the averaged function value $\mathcal{F}(\bm{X}_k)$ over Num. runs.
\end{description}

We first evaluate the numerical performance of DPGA-I and DPGA-II for solving problem \eqref{Problem-Lp} with $m=200$ and $n=100$. For regularization orders $p = 0,\; 1/2,\; 2/3,\;1$,  we  record in Table \ref{DPGAI-II} the
numerical results  of DPGA-I and DPGA-II.
Figure \ref{fig:random_con_all} plots the function value $\mathcal{F}(\bm{X}_k)$ and optimality measure $\operatorname{d}(0,\partial \mathcal{F}(\bm{X}_k))$  along with the iterations generated in one of the 10 runs conducted for Table \ref{DPGAI-II}. Several observations are shown in Table \ref{DPGAI-II} and Figure \ref{fig:random_con_all}:

(i) Both  DPGA-I and DPGA-II converge to an approximate stationary point of problem \eqref{Problem-Lp}, which is consistent with Theorem \ref{thm2} and demonstrates the effectiveness of DPGAs. Moreover, better computational accuracy for stationarity is achieved in terms of $\epsilon$  as the $p$ value increases.

(ii) In the case  when $p=0$, DPGA-I  has a higher success rate than DPGA-II in terms of Num.. 

(iii) DPGA-I requires significantly fewer iterations compared to DPGA-II particularly when  $p=0,\;2/3,\;1$. This is mainly because DPGA-II adopts a conservative stepsize. For example, $\|\bm{A}^\top \bm{A}\|_{\rm op}$ will be very large when the problem size is large, leading to a small step  size and causing DPGA-II to progress slowly.

(iv)  For all $p$'s, except for $p =1/2$, DPGA-I consumes less CPU time than DPGA-II.  A possible reason is that each iteration of DPGA-I may be computationally more expensive due to the backtracking search process in the case  when $p =1/2$.

\begin{table}[H]
\centering
\small
\caption{Numerical results of DPGA-I and DPGA-II for problem \eqref{Problem-Lp} with $m=200$, $n=100$}
\label{DPGAI-II}
\setlength{\tabcolsep}{1.2mm}{
\begin{tabular}{ccccccccccccc}
    \toprule
   \multirow{2}{*}{$p $}& \multirow{2}{*}{$\epsilon $}& \multicolumn{5}{c}{DPGA-I}  & \multicolumn{5}{c}{DPGA-II} \\
 \cmidrule(lr){3-7} \cmidrule(l){8-12}
& &Num. & Dis. & Rel. & Fva. & Tim. &Num.& Dis. & Rel. & Fva. & Tim. \\[0.3em]
   \midrule
    \multirow{1}{*}{$0$}
&5e$-$1   & 10    &5.00e$-$1 &  1.38e$-$3 &  2.57e$+$2 &  2.36e$+$1  &2 & 5.00e$-$1 &  1.01e$-$5&  3.04e$+$2 &     4.82e$+$2\\[0.3em]
    \multirow{1}{*}{$1/2$}
&5e$-$2     &10  &  5.00e$-$2   &1.31e$-$3&  3.02e$+$2 &   3.06e$+$1 &10  &   5.00e$-$2 & 1.83e$-$4&  3.04e$+$2 & 7.20e$+$0\\[0.3em]
         \multirow{1}{*}{$2/3$}
&5e$-$3      &10    &   4.49e$-$3& 9.96e$-$4&   3.08e$+$2 &   1.30e$+$0  &10  &         5.00e$-$3 &   3.82e$-$5 &3.08e$+$2&     4.26e$+$0  \\[0.3em]
    \multirow{1}{*}{$1$}
&5e$-$6   &10    &    4.52e$-$6&     3.65e$-$7&      3.10e$+$2 &3.53e$-$1 & 10   &  4.96e$-$6&        2.88e$-$8&   3.10e$+$2&   2.28e$+$0\\[0.3em]
    \bottomrule
\end{tabular}}
\label{table:random1}
\end{table}

\begin{figure}[h]
\centering
\begin{subfigure}{0.255\linewidth}
  \centering
  \includegraphics[width=\linewidth]{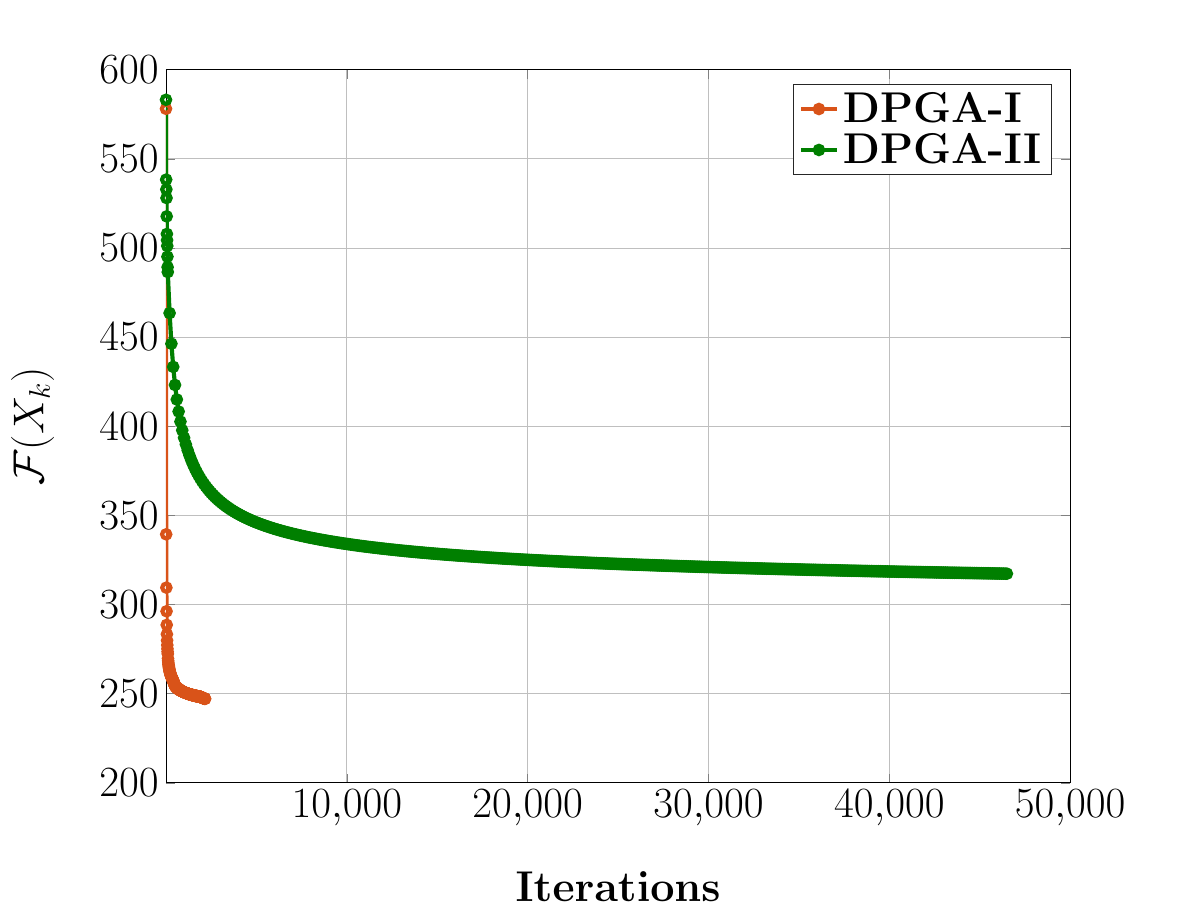}
  \vspace{-0.35cm}
\end{subfigure}\hspace{-0.6em}%
\begin{subfigure}{0.255\linewidth}
  \centering
  \includegraphics[width=\linewidth]{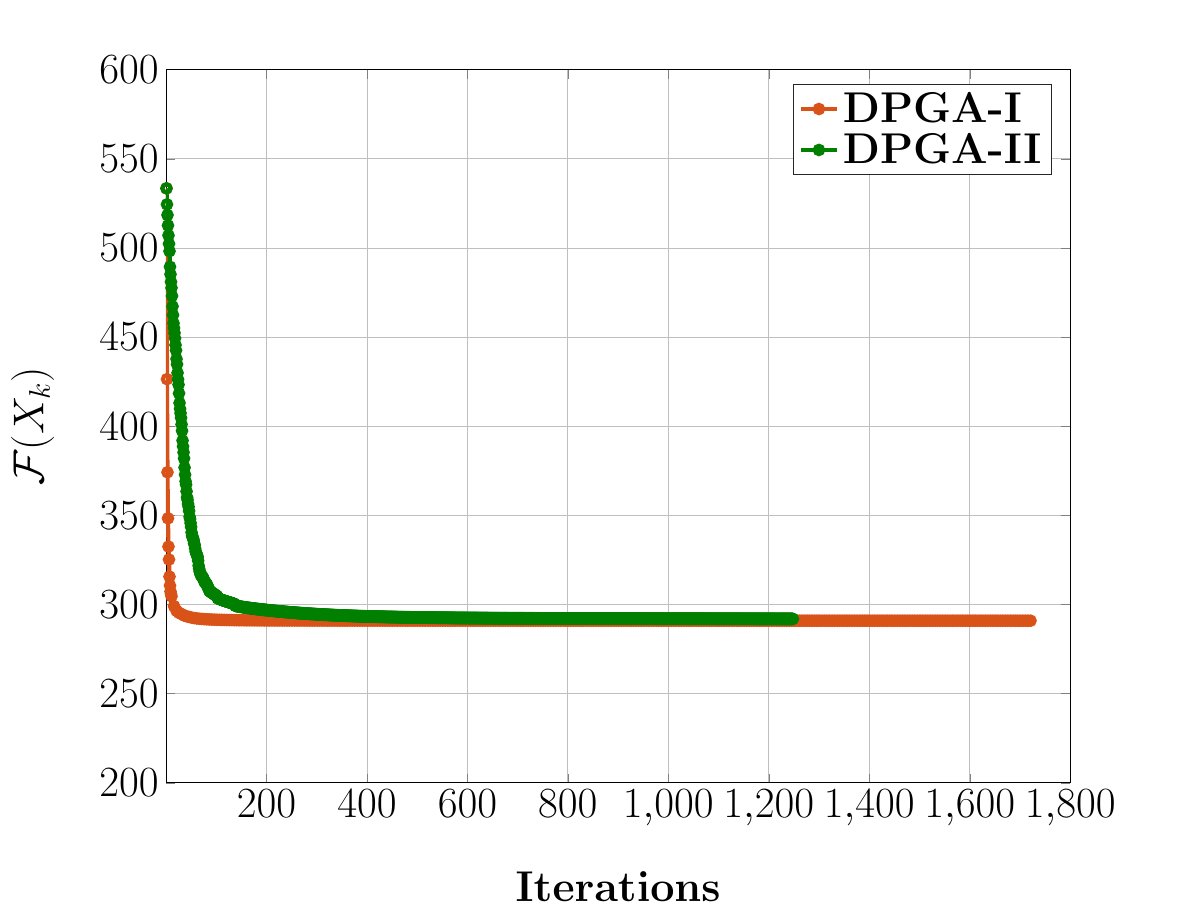}
  \vspace{-0.35cm}
\end{subfigure}\hspace{-0.6em}%
\begin{subfigure}{0.255\linewidth}
  \centering
  \includegraphics[width=\linewidth]{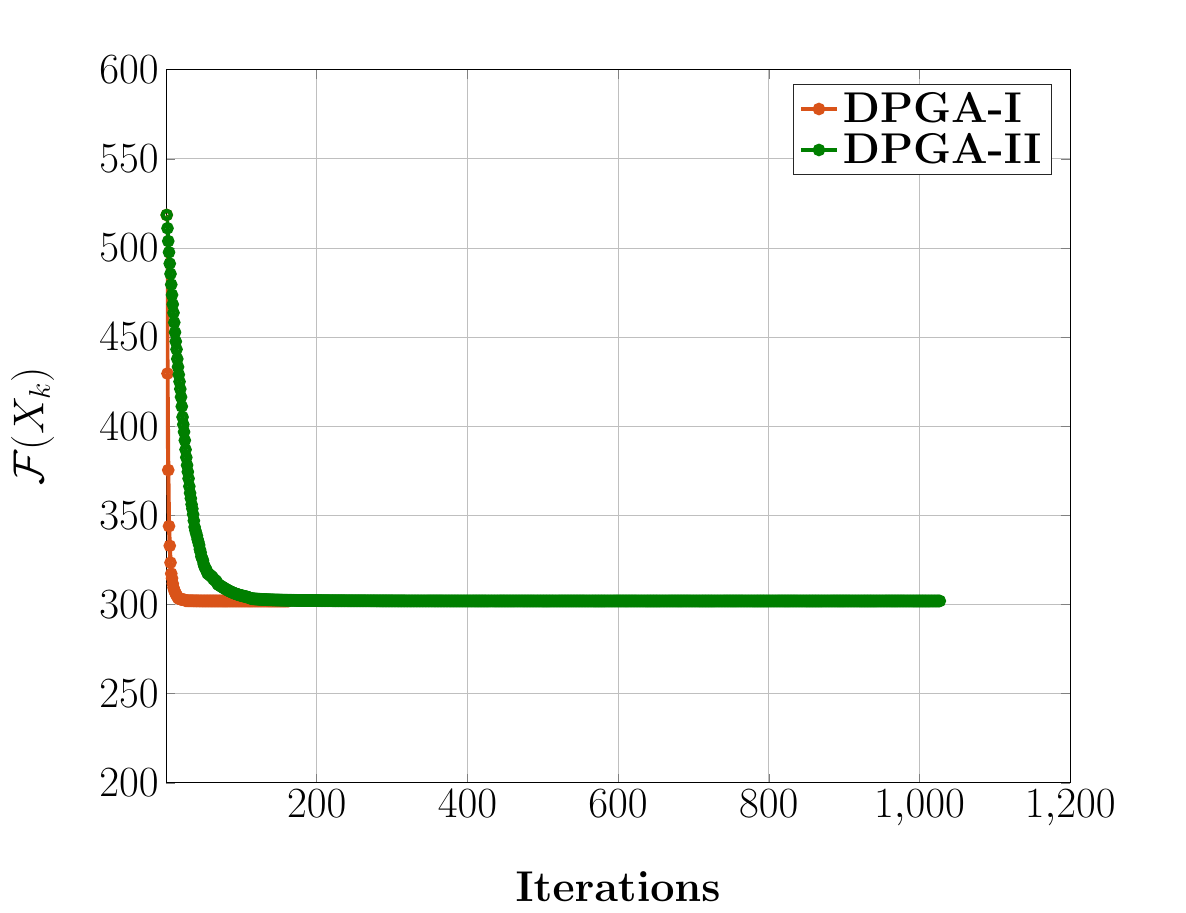}
  \vspace{-0.35cm}
\end{subfigure}\hspace{-0.6em}%
\begin{subfigure}{0.255\linewidth}
  \centering
  \includegraphics[width=\linewidth]{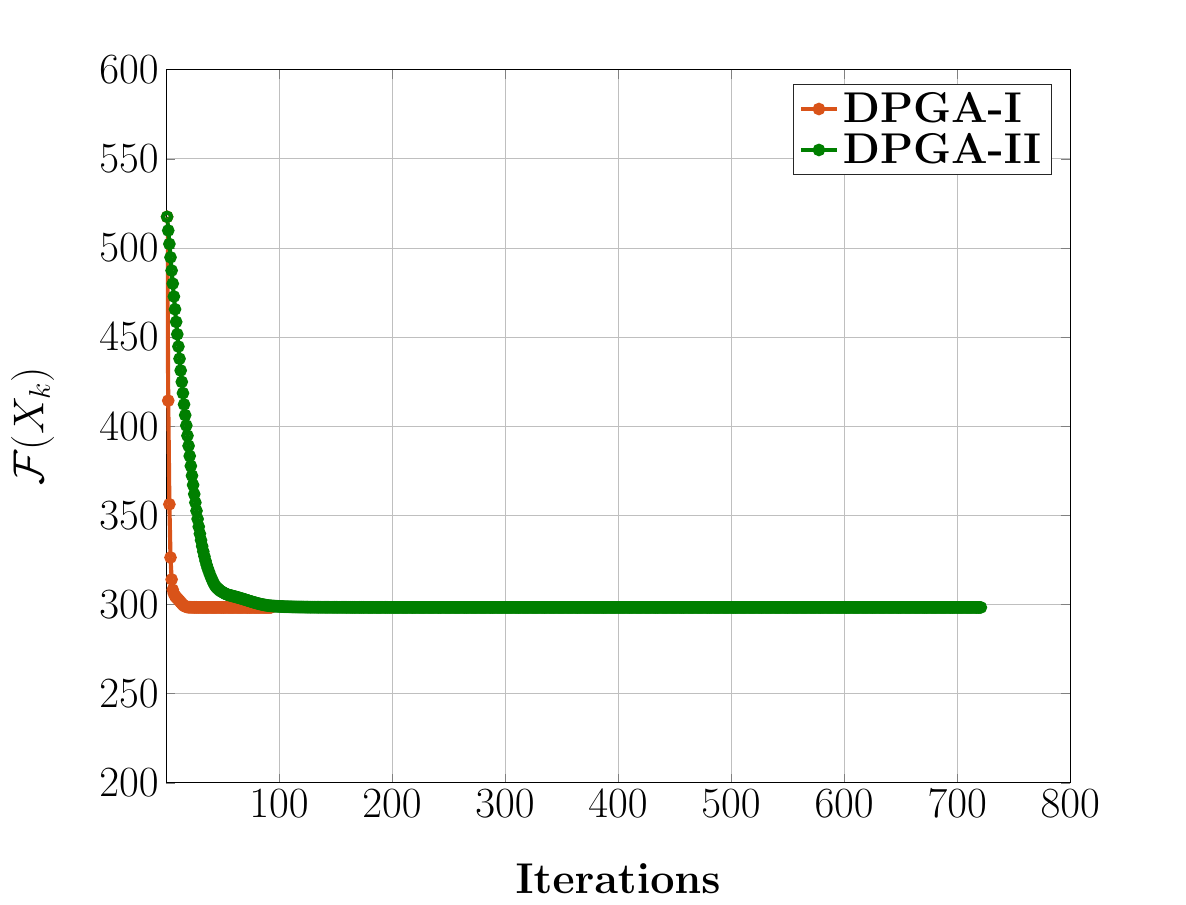}
  \vspace{-0.35cm}
\end{subfigure}

\vspace{-0.15cm}

\begin{subfigure}{0.255\linewidth}
  \centering
  \includegraphics[width=\linewidth]{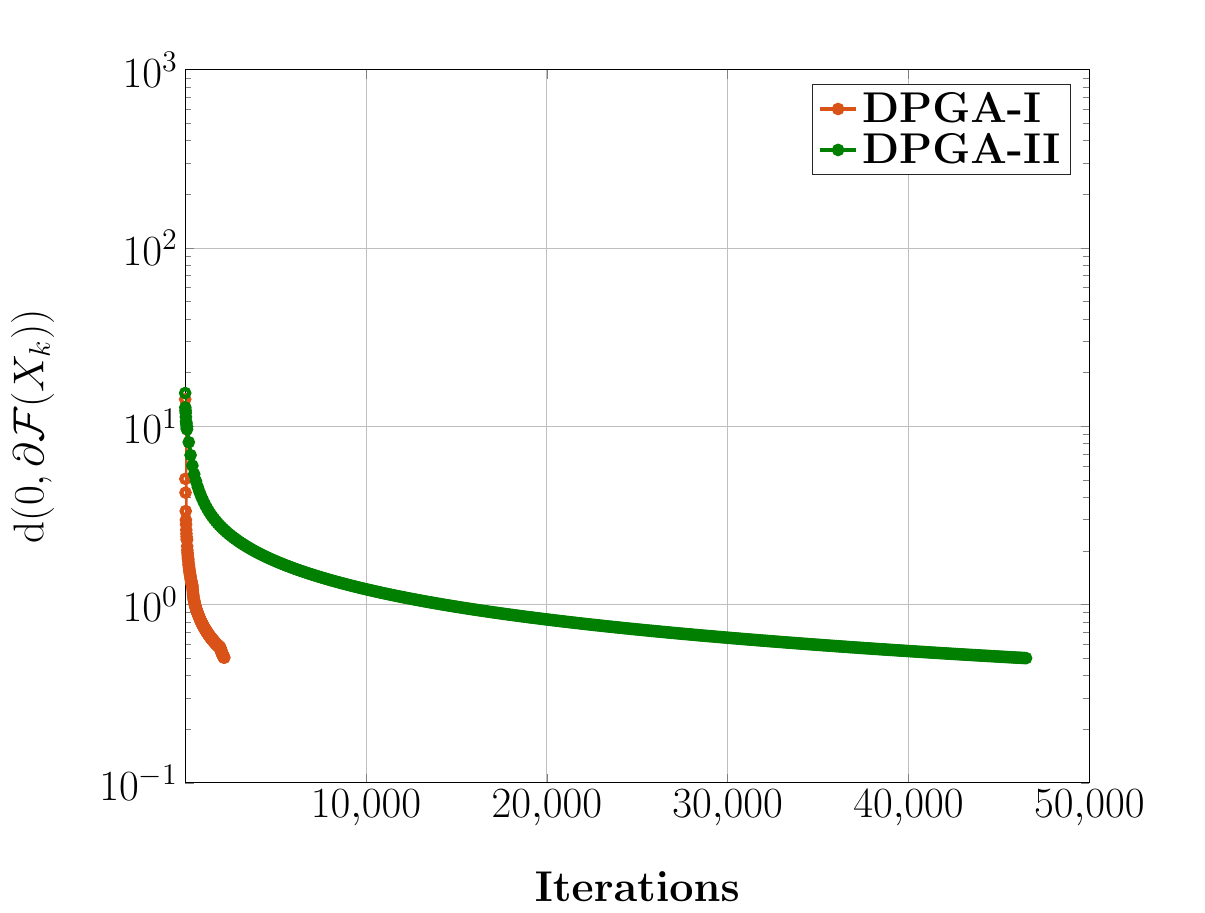}
  \subcaption*{$p=0,\;\epsilon=5e-1$}
  \vspace{-0.35cm}
\end{subfigure}\hspace{-0.6em}%
\begin{subfigure}{0.255\linewidth}
  \centering
  \includegraphics[width=\linewidth]{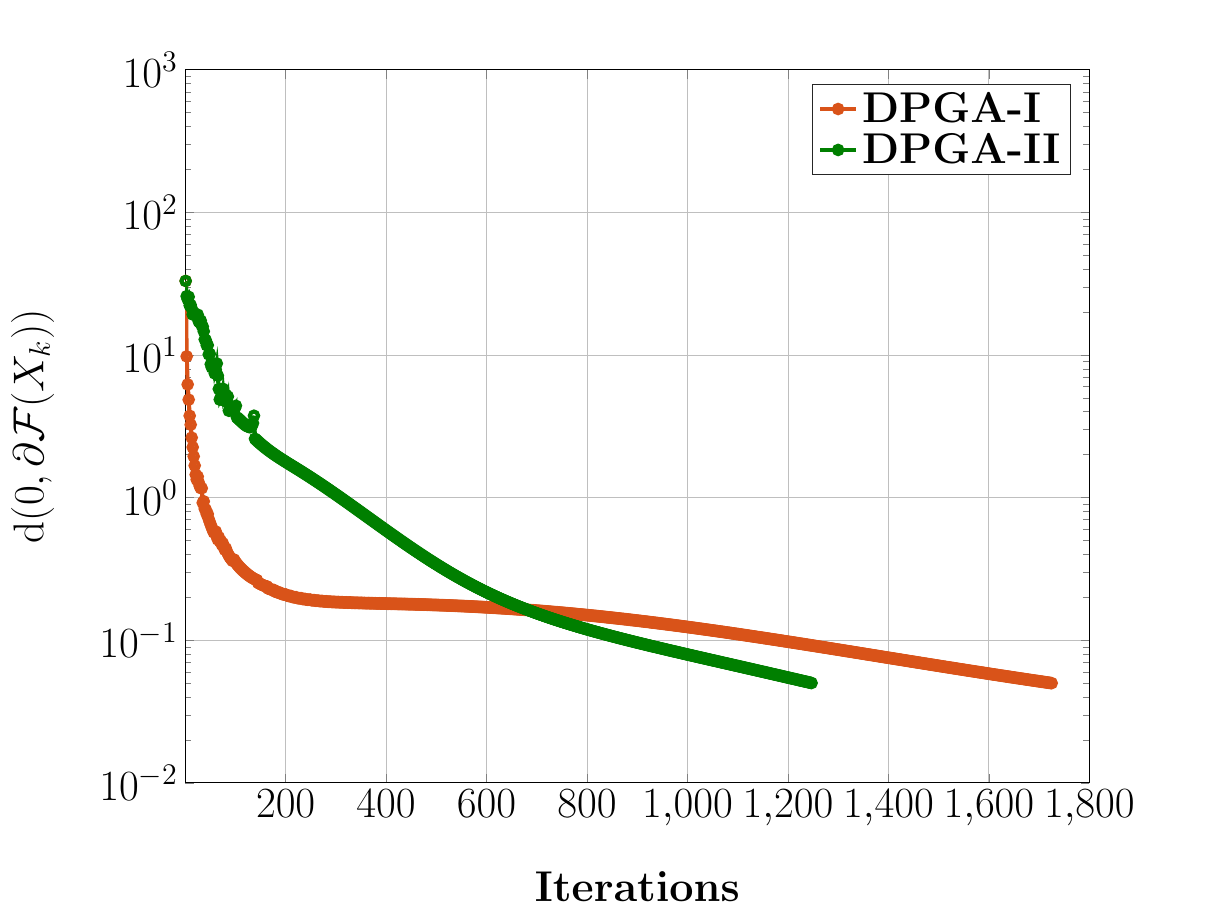}
  \subcaption*{$p=1/2,\;\epsilon=5e-2$}
  \vspace{-0.35cm}
\end{subfigure}\hspace{-0.6em}%
\begin{subfigure}{0.255\linewidth}
  \centering
  \includegraphics[width=\linewidth]{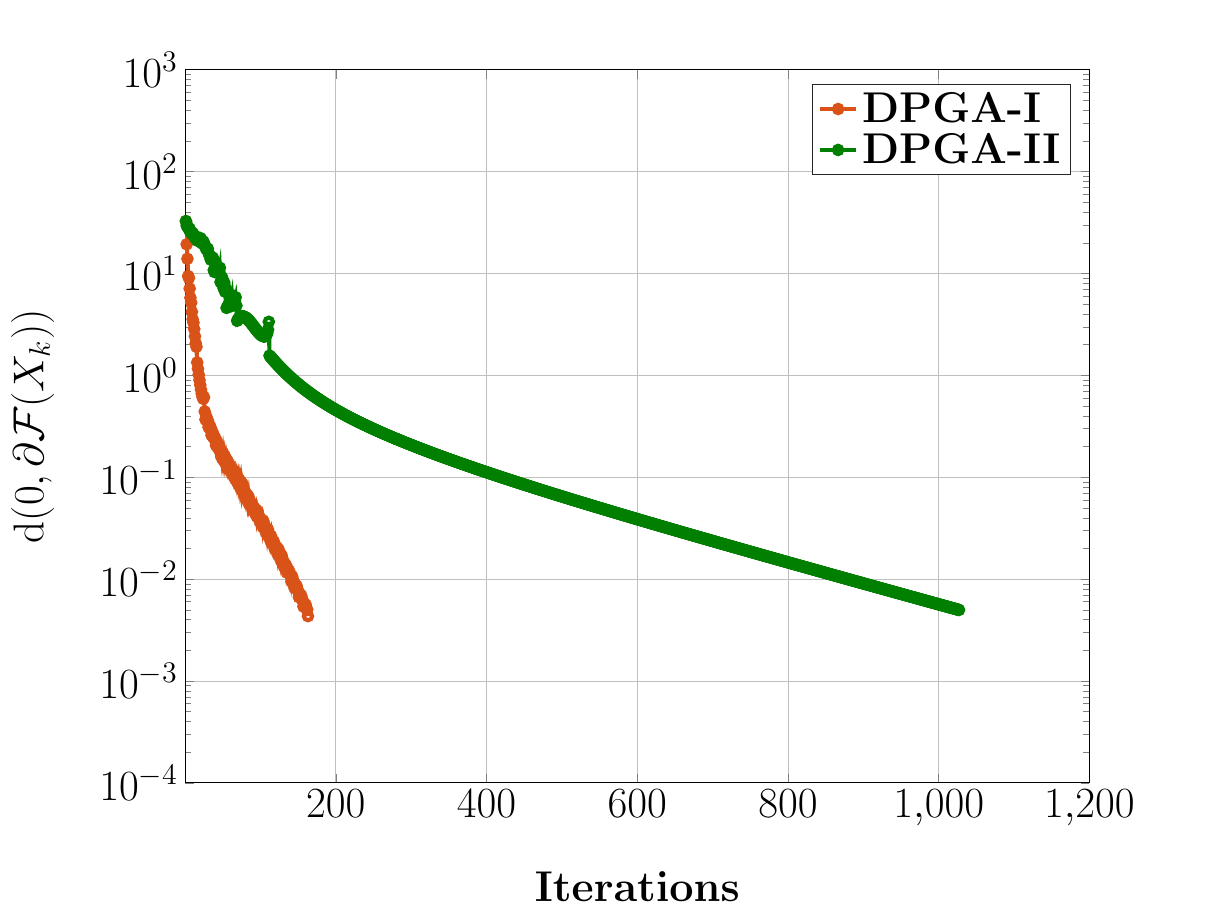}
  \subcaption*{$p=2/3,\;\epsilon=5e-3$}
  \vspace{-0.35cm}
\end{subfigure}\hspace{-0.6em}%
\begin{subfigure}{0.255\linewidth}
  \centering
  \includegraphics[width=\linewidth]{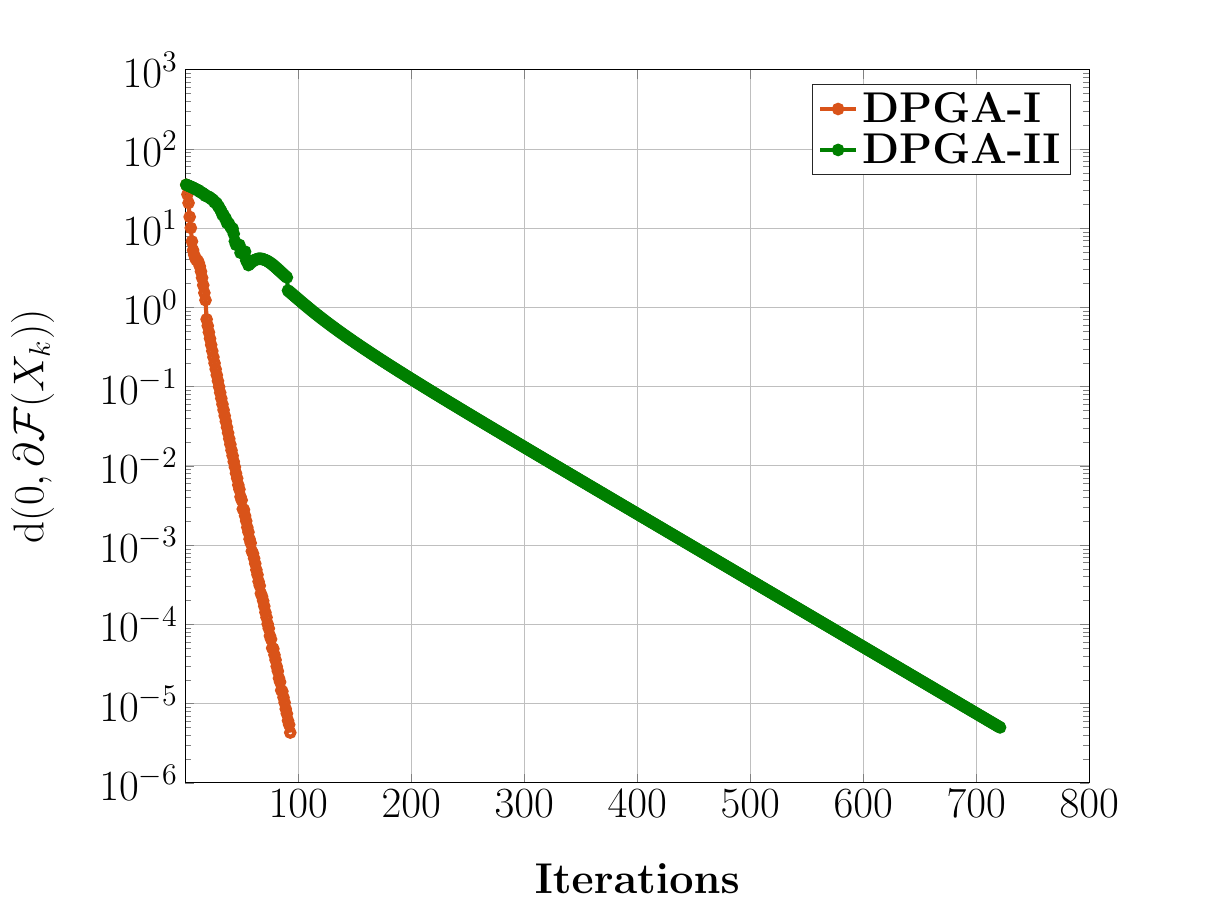}
  \subcaption*{$p=1,\;\epsilon=5e-6$}
  \vspace{-0.35cm}
\end{subfigure}

\vspace{0.35cm}
\caption{Function value and optimality measure for DPGA-I and DPGA-II.}
\vspace{-0.3cm}
\label{fig:random_con_all}
\end{figure}

Next, we compare DPGA with the classical FPIA for solving problem \eqref{Problem-Lp} with different problem sizes $m\times n$. Our evaluation focuses on
$p=1/2,\;2/3,\; 1$. Moreover, inspired by the previous experiments, we implement DPGA-II  for $p=1/2$ and DPGA-I  for $p=2/3,\; 1$. The numerical results of FPIA, DPGA-I and DPGA-II are presented  in Tables \ref{DPGAI-II-FPIA}--\ref{DPGAI-I-FPIA-P=1}. It is revealed in these tables:

(i) DPGA-I and DPGA-II perform better on CPU time than FPIA in all cases. They require at most one tenth of the CPU time consumed by FPIA to approach an approximate stationary point of problem \eqref{Problem-Lp}. Particularly, in the case when $p=1$ and $m\times n=200\times 100$, it requires as little as one two-hundredth of FPIA's time. Consequently, DPGA-I and DPGA-II converge faster than FPIA.

(ii) It is frequently for FPIA to encounter interruptions due to the SVD or to fail to meet accuracy requirement within the prescribed number of iterations, particularly  for large-scale problems $800\times400$ and $1600\times800$ when $p=1/2,\;2/3$. In this sense,
DPGA-I and DPGA-II demonstrate a much higher success rate than FPIA in terms of Num., indicating its superior numerical stability.

\begin{table}[H]
\centering
\small
\caption{Numerical results of FPIA and DPGA-II for different problem sizes with $p=1/2$}
\label{DPGAI-II-FPIA}
\setlength{\tabcolsep}{1.2mm}{
\begin{tabular}{ccccccccccccc}
    \toprule
    \multirow{2}{*}{$m\times n$} &\multirow{2}{*}{$\epsilon$}& \multicolumn{5}{c}{FPIA}  & \multicolumn{5}{c}{DPGA-II} \\
   \cmidrule(lr){3-7} \cmidrule(l){8-12}
 & &Num. & Dis. & Rel.& Fva. & Tim. &Num.& Dis. & Rel. & Fva. & Tim. \\[0.3em]
      \midrule
    \multirow{1}{*}{$200\times 100$}
   &5e$-$2 &10   & 4.99e$-$2  &   3.43e$-$6&   3.07e$+$2&    1.73e$+$2& 10  &5.00e$-$2 &1.27e$-$4 &  3.04$+$2&  1.98e$+$1\\[0.3em]
    \multirow{1}{*}{$400\times 200$}
   &5e$-$2  &8      &  5.00e$-$2 &  2.37e$-$6&  1.22e$+$3&    6.67e$+$2  &10 &  5.00e$-$2 &      4.08e$-$5 &      1.20e$+$3  &     5.92e$+$1\\[0.3em]
    \multirow{1}{*}{$800\times 400$}
   & 5e$-$2 &0     & - & - & - & -  &10 &  5.00e$-$2  & 7.58e$-$6  &  4.75e$+$3 & 1.00e$+$3 \\[0.3em]

            \multirow{1}{*}{$1600\times 800$}
  &5e$-$1 &0     & - & - & - & -  &10 &  5.00e$-$1  & 2.46e$-$5   &  1.89e$+$4 &  3.27e$+$3  \\[0.3em]

    \bottomrule
\end{tabular}}
\label{table:random1}
\end{table}

\begin{table}[H]
\centering
\small
\caption{Numerical results of FPIA and DPGA-I for different problem sizes with $p=2/3$}
\label{DPGAI-I-FPIA}
\setlength{\tabcolsep}{1.2mm}{
\begin{tabular}{ccccccccccccc}
    \toprule
    \multirow{2}{*}{$m\times n$}&\multirow{2}{*}{$\epsilon$} & \multicolumn{5}{c}{FPIA}  & \multicolumn{5}{c}{DPGA-I} \\[0.3em]
   \cmidrule(lr){3-7} \cmidrule(l){8-12}
&  &Num. & Dis. & Rel. &Fva. & Tim.  &Num. & Dis. & Rel. &Fva. & Tim. \\[0.3em]
      \midrule
    \multirow{1}{*}{$200\times 100$}
  &5e$-$3 & 9     & 4.99e$-$3    &   5.98e$-$7     &     3.06e$+$2   &   2.29e$+$2& 10 &   4.89e$-$3 &     9.41e$-$4 &      3.04e$+$2&      4.00e$+$0\\[0.3em]
    \multirow{1}{*}{$400\times 200$}
&5e$-$3 &  5      & 4.98e$-$3  &  4.68e$-$7  &    1.20e$+$3  &       6.02e$+$2 & 10  & 4.84e$-$3 &      3.96e$-$4 &   1.20e$+$3  &    2.06e$+$1  \\[0.3em]
    \multirow{1}{*}{$800\times 400$}
&5e$-$3  &  0     & - & - & - & -     & 10 &    4.77e$-$3  &    2.96e$-$5    &   4.77e$+$3   &        3.28e$+$2  \\[0.3em]
            \multirow{1}{*}{$1600\times 800$}
&5e$-$2 &  0      & - & - & - & -      & 10 &   4.82e$-$2    &   8.33e$-$5   &  1.89e$+$4  &  6.62e$+$3  \\[0.3em]

    \bottomrule
\end{tabular}}
\label{table:random1}
\end{table}

\begin{table}[H]
\centering
\small
\caption{Numerical results of FPIA and DPGA-I for different problem sizes with $p=1$}
\label{DPGAI-I-FPIA-P=1}
\setlength{\tabcolsep}{1.2mm}{
\begin{tabular}{ccccccccccccc}
    \toprule
    \multirow{2}{*}{$m\times n$}&\multirow{2}{*}{$\epsilon$} & \multicolumn{5}{c}{FPIA}  & \multicolumn{5}{c}{DPGA-I} \\[0.3em]
   \cmidrule(lr){3-7} \cmidrule(l){8-12}
& & Num.  & Dis. & Rel.&Fva. & Tim.& Num. & Dis. & Rel. &Fva. &Tim. \\[0.3em]
      \midrule
    \multirow{1}{*}{$200\times 100$}
  &5e$-$6 & 10      &     4.99e$-$6 &      8.67e$-$10&       3.08e$+$2&       6.79e$+$1  & 10   &          4.52e$-$6 &    3.65e$-$7&       3.10e$+$2&        3.49e$-$1\\[0.3em]
    \multirow{1}{*}{$400\times 200$}
&5e$-$5  & 10   &        4.97e$-$5       &     8.32e$-$9    &         1.21e$+$3  &       3.78e$+$2     & 10 &    4.47e$-$5    &      3.81e$-$6     &     1.21e$+$3    &     5.29e$+$1   \\[0.3em]
    \multirow{1}{*}{$800\times 400$}
&5e$-$4  & 10         &      4.96e$-$4   &    7.85e$-$8   &    4.78e$+$3    &    5.45e$+$2      & 10 &     4.52e$-$4   &       6.76e$-$6    &    4.80e$+$3   &    2.76e$+$1 \\[0.3em]

            \multirow{1}{*}{$1600\times 800$}
&5e$-$3   & 10       &      4.97e$-$3  &    6.99e$-$7   &    1.89e$+$4    &      3.60e$+$3  & 10   &    4.53e$-$3  &      1.55e$-$5  &      1.90e$+$4 &   3.89e$+$2\\[0.3em]

    \bottomrule
\end{tabular}}
\label{table:random1}
\end{table}

\section{Conclusions}\label{sec:conclusions}

In this paper, we proposed effective dynamic proximal gradient algorithms for solving Schatten-$p$ quasi-norm regularized problems by mitigating the high computational costs caused by  SVD in each iteration by virtue of Cayley transformations.
The convergence results including convergence rates for the proposed algorithm were established. Numerical tests were also given to illustrate the efficiency of
our algorithm. Future research directions include leveraging the partial smoothness of the $\ell_p$ norm and utilizing the active manifold identified by our method to accelerate the algorithm, such as by incorporating a two-stage second-order approach within our reparameterization framework. Additionally, from an application standpoint, it is crucial to evaluate the performance of DPGA in real-world scenarios and make further adjustments tailored to the specific structures of engineering problems.

\section*{Acknowledgment}

The authors thank Professor Anthony Man-Cho So for helpful discussions and providing useful references.


\section*{\LARGE Appendix}
\begin{appendices}

\section{Basic Facts on Matrices}
In the appendix, we will present some auxiliary facts on matrices. For $\bm{X}\in \mathbb{R}^{n\times n}$, let $e^{\bm{X}}$ and ${\rm log}(\bm{X})$ denote the exponential and logarithmic of $\bm{X}$ which are respectively defined by
\begin{equation*}
e^{\bm{X}}:=\sum_{k=0}^{\infty}\frac{1}{k!}\bm{X}^k\quad\text{and}\quad {\rm log}(\bm{X}):=\sum_{k=1}^{\infty}\frac{(-1)^{k+1}}{k}(\bm{X}-\bm{I})^k.
\end{equation*}
Then,  by   \cite[formular (4)]{Tetsuji}, we have
\begin{align}\label{eX}
e^{\bm{X}+\Delta\bm {X}}=\left(\sum\limits_{k=0}^{\infty}\frac{1}{k!}\mathcal{X}_{k}\right)e^{\bm{X}}=e^{\bm{X}}\left(\sum\limits_{k=0}^{\infty}\frac{1}{k!}\mathcal{Z}_{k}\right),
\end{align}
where $\mathcal{X}_{0}=\mathcal{Z}_{0}=\bm{I} $, $\mathcal{X}_{1}=\mathcal{Z}_{1}=\Delta {\bm X} $, $\mathcal{X}_{k}:=\bm {X}\mathcal{{\bm X}}_{k-1}-\mathcal{\bm X}_{k-1}\bm {X}+\Delta {\bm X}\mathcal{X}_{k-1}$ and
$\mathcal{Z}_{k}:=\mathcal{{\bm Z}}_{k-1}\bm {X}-\bm {X}\mathcal{\bm Z}_{k-1}+\mathcal{Z}_{k-1}\Delta {\bm X}$ ($k\ge2$).
The following lemma illustrates their basic properties. In particular, assertions (i)--(iii) can be found in \cite[Proposition 2.1, Proposition 2.3, Theorem 2.7]{Hall2015Lie}, while assertion (iv) has been proved in \cite[Lemma 2.1]{zhu2023rotation}.

\begin{lemma}\label{lemmaexplog} Let $\bm{X}$, $\bm{Y}\in\mathbb{R}^{n\times n}$. Then we have the following assertions.
\begin{itemize}
  \item [\rm{(i)}]The matrix exponential $e^{\bm{X}}$ is a well-defined  function of $\bm{X}$.
  \item  [\rm{(ii)}]If $\Vert \bm{X}-\bm{I}\Vert_F<1$, ${\rm log}(\bm{X})$ is a well-defined continuous function of ${\bm X}$ and moreover $e^{{\rm log}(\bm{X})}=\bm{X}$.  Suppose further that $\Vert {\bm X}\Vert_F<{\rm log} 2$, then ${\rm log}(e^{\bm X})={\bm X}$.
  \item  [\rm{(iii)}]If  $\bm{X}\bm{Y}=\bm{Y}\bm{X}$, then $e^{\bm{X}+\bm{Y}}=e^{\bm{X}}e^{\bm{Y}}.$
   \item  [\rm{(iv)}] If $\bm{X}\in \mathcal{S}(n)$, then $e^{\bm{X}}\in\mathcal{O}(n)$ and $(3-e)\Vert \bm{X}\Vert_{F}\leq\Vert e^{\bm{X}}-\bm{I}\Vert_{F}\leq\Vert \bm{X}\Vert_{F}$.
   \item  [\rm{(v)}]If  $\bm{X}\bm{Y}=\bm{Y}\bm{X}$, $\Vert \bm{X}-\bm{I}\Vert_F<1$, $\Vert \bm{Y}-\bm{I}\Vert_F<1$ and $\Vert{\rm log}(\bm{X})+{\rm log}(\bm{Y})\Vert_F<{\rm log} 2$, then ${\rm log}(\bm{X}\bm{Y})={\rm log}(\bm{X})+{\rm log}(\bm{Y}).$
  \item  [\rm{(vi)}]If $\bm{X}\in\mathcal{O}(n)$ satisfying $\Vert \bm{X}-\bm{I}\Vert_F<1$ and $\Vert{\rm log}(\bm{X})\Vert_F<\frac{1}{2}{\rm log} 2$, then ${\rm log}(\bm{X})\in\mathcal{S}(n)$.
\end{itemize}
\end{lemma}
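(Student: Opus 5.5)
We must prove Lemma~\ref{lemmaexplog}, the last statement shown. Assertions (i)--(iii) are quoted from \cite{Hall2015Lie} and (iv) from \cite{zhu2023rotation}, so the only items that need an argument are (v) and (vi). For both, the plan is to reduce everything to (ii), using the fact that the logarithm is locally inverse to the exponential.

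For (v), we would set $\bm{A}:={\rm log}(\bm{X})$ and $\bm{B}:={\rm log}(\bm{Y})$; these are well defined by (ii) because $\Vert\bm{X}-\bm{I}\Vert_F<1$ and $\Vert\bm{Y}-\bm{I}\Vert_F<1$. Since $\bm{A}$ is the norm-convergent power series in $\bm{X}-\bm{I}$, and $\bm{X}$ commutes with $\bm{Y}$, the matrix $\bm{A}$ commutes with every power of $\bm{Y}-\bm{I}$. Passing to the limit in the series for $\bm{B}$ then gives $\bm{A}\bm{B}=\bm{B}\bm{A}$. By (iii) and (ii) we get $e^{\bm{A}+\bm{B}}=e^{\bm{A}}e^{\bm{B}}=\bm{X}\bm{Y}$. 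The hypothesis $\Vert\bm{A}+\bm{B}\Vert_F<{\rm log}2$ together with the second half of (ii) yields ${\rm log}(e^{\bm{A}+\bm{B}})=\bm{A}+\bm{B}$. One point must be checked here: ${\rm log}(\bm{X}\bm{Y})$ is defined by the series, so we need $\Vert\bm{X}\bm{Y}-\bm{I}\Vert_F<1$. This follows from $\Vert e^{\bm{Z}}-\bm{I}\Vert_F\le e^{\Vert\bm{Z}\Vert_F}-1<1$ when $\Vert\bm{Z}\Vert_F<{\rm log}2$. Combining these gives ${\rm log}(\bm{X}\bm{Y})=\bm{A}+\bm{B}$.

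For (vi), we would set $\bm{A}:={\rm log}(\bm{X})$ with $\bm{X}$ orthogonal. The key observation is that transposition commutes with the power series, so ${\rm log}(\bm{X}^\top)=\bm{A}^\top$; also $\bm{X}^\top=\bm{X}^{-1}$ and $\Vert\bm{X}^\top-\bm{I}\Vert_F=\Vert\bm{X}-\bm{I}\Vert_F<1$. The aim is to show $\bm{A}^\top=-\bm{A}$, and we would do this by applying (v) to the commuting pair $\bm{X},\bm{X}^\top$:
\[
{\rm log}(\bm{X}\bm{X}^\top)={\rm log}(\bm{X})+{\rm log}(\bm{X}^\top)=\bm{A}+\bm{A}^\top .
\]
This requires $\Vert\bm{A}+\bm{A}^\top\Vert_F\le 2\Vert\bm{A}\Vert_F<{\rm log}2$, which is exactly where the factor $\tfrac12$ in the hypothesis is used. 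Since $\bm{X}\bm{X}^\top=\bm{I}$ and ${\rm log}(\bm{I})=\bm{0}$ from the series, we conclude $\bm{A}+\bm{A}^\top=\bm{0}$, i.e.\ $\bm{A}\in\mathcal{S}(n)$.

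The main obstacle is the bookkeeping of domains rather than any deep fact. One must verify that every logarithm used is given by a convergent series, and that the commutation of $\bm{A}$ and $\bm{B}$ really follows from that of $\bm{X}$ and $\bm{Y}$. This requires justifying the limit exchange in the absolutely convergent series, and it is the step where I would write the argument out most carefully.
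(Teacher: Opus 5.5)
Your proposal is correct and follows essentially the same route as the paper: for (v) you combine (iii) with both halves of (ii) after noting that ${\rm log}(\bm{X})$ and ${\rm log}(\bm{Y})$ commute, and for (vi) you apply (v) to the commuting pair $\bm{X},\bm{X}^\top$ using $\Vert{\rm log}(\bm{X})+{\rm log}(\bm{X})^\top\Vert_F\le 2\Vert{\rm log}(\bm{X})\Vert_F<{\rm log}2$. Your explicit checks that ${\rm log}(\bm{X}^\top)={\rm log}(\bm{X})^\top$ and that ${\rm log}(\bm{X}\bm{Y})$ is given by a convergent series fill in details the paper leaves implicit; note also that the paper's proof of (vi) cites (iii) where (v) is what is actually used, and you invoke (v) correctly.
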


\begin{proof}
We only need to prove assertions (v) and (vi). For the proof of assertion (v), suppose that $\bm{X}\bm{Y}=\bm{Y}\bm{X}$, $\Vert \bm{X}-\bm{I}\Vert_F<1$, $\Vert \bm{Y}-\bm{I}\Vert_F<1$  and  $\Vert{\rm log}(\bm{X})+{\rm log}(\bm{Y})\Vert_F<{\rm log} 2$.
Then, thanks to assertion (ii), we know that ${\rm log}(\bm{X})$, ${\rm log}(\bm{Y})$ are well-defined  and  moreover, by the definition of $\bm{X}$, ${\rm log}(\bm{X})$ and ${\rm log}(\bm{Y})$ commutes.
Thus, using (ii) and (iii), one has
\begin{equation}\label{e-XY}
e^{{\rm log}(\bm{X})+{\rm log}(\bm{Y})}=e^{{\rm log}(\bm{X})}e^{{\rm log}(\bm{Y})}=\bm{XY}.
\end{equation}
Note that $\Vert{\rm log}(\bm{X})+{\rm log}(\bm{Y})\Vert_F<{\rm log} 2$. Then, taking  logarithmic for both side of \eqref{e-XY} and using again  (ii), we deduce ${\rm log}(\bm{X})+{\rm log}(\bm{Y})={\rm log}(\bm{XY})$.

 In order to prove assertion (vi), let $\bm{X}\in\mathcal{O}(n)$ satisfy $\Vert \bm{X}-\bm{I}\Vert_F<1$ and $\Vert{\rm log}(\bm{X})\Vert_F<\frac{1}{2}{\rm log} 2$. Then,
 ${\rm log}(\bm{X})$ and ${\rm log}(\bm{X}^\top)$ are well defined. Moreover,  by definition, one has
 $\Vert{\rm log}(\bm{X})+{\rm log}(\bm{X}^\top)\Vert_F\leq 2\Vert{\rm log}(\bm{X})\Vert_F <{\rm log} 2.$ Hence, thanks to assertion (iii) and the orthogonality of $\bm{X}$, it follows that ${\rm log}(\bm{X})+{\rm log}(\bm{X})^\top={\rm log}(\bm{XX}^\top)={\rm log}(\bm{I})=0$, which means ${\rm log}(\bm{X})\in\mathcal{S}(n)$.
\end{proof}

The following lemma demonstrates the Lipschitz continuity  of the exponential and logarithmic mapping.

\begin{lemma}\label{explogcon} Let $\bm{X}\in \mathbb{R}^{n\times n}$ and $\Delta\bm{X}\in \mathbb{R}^{n\times n}$. Then the following two implications hold:
\begin{itemize}
  \item [\rm{(i)}]$2\Vert \bm{X}\Vert_F+\Vert\Delta \bm{X}\Vert_F\le1\Longrightarrow\Vert e^{\bm{X}+\Delta \bm{X}}-e^{{\bm X}}\Vert_F\le2\Vert e^{\bm X}\Vert_F\Vert \Delta \bm {X}\Vert_F;$
  \item  [\rm{(ii)}]$3\Vert \bm{X}-{\bm I}\Vert_F+\Vert\Delta \bm{X}\Vert_F\le\frac12\Longrightarrow\Vert {\rm log}({\bm X}+\Delta {\bm X})-{\rm log}({\bm X})\Vert_F\le2\Vert\Delta \bm{X}\Vert_F.$
\end{itemize}

\end{lemma}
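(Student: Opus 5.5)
For (i) I would write $\bm{X}+\Delta\bm{X}$ as a perturbation of $\bm{X}$ and use the integral (Duhamel) representation
\begin{equation*}
e^{\bm{X}+\Delta\bm{X}}-e^{\bm{X}}=\int_0^1 e^{(1-s)(\bm{X}+\Delta\bm{X})}\,\Delta\bm{X}\,e^{s\bm{X}}\,{\rm d}s .
\end{equation*}
The exponential factors I would not bound by plain submultiplicativity, since that would give $e^{\|\bm{X}\|_F}$ rather than $\|e^{\bm{X}}\|_F$. Instead I would rewrite $e^{s\bm{X}}=e^{-(1-s)\bm{X}}e^{\bm{X}}$, so that the integrand becomes $e^{(1-s)(\bm{X}+\Delta\bm{X})}\Delta\bm{X}e^{-(1-s)\bm{X}}\,e^{\bm{X}}$. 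Then
\begin{equation*}
\|e^{\bm{X}+\Delta\bm{X}}-e^{\bm{X}}\|_F\le \sup_{s}\|e^{(1-s)(\bm{X}+\Delta\bm{X})}\|_2\,\|e^{-(1-s)\bm{X}}\|_2\,\|\Delta\bm{X}\|_F\,\|e^{\bm{X}}\|_F .
\end{equation*}
The scalar prefactor is at most $e^{\|\bm{X}\|_F+\|\Delta\bm{X}\|_F}e^{\|\bm{X}\|_F}=e^{2\|\bm{X}\|_F+\|\Delta\bm{X}\|_F}$. The hypothesis only gives this is at most $e\approx 2.718$, which exceeds $2$, so a sharper route is needed. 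I would use the series \eqref{eX} instead: $e^{\bm{X}+\Delta\bm{X}}-e^{\bm{X}}=\big(\sum_{k\ge1}\frac1{k!}\mathcal{X}_k\big)e^{\bm{X}}$. By induction from the recursion $\mathcal{X}_k=[\bm{X},\mathcal{X}_{k-1}]+\Delta\bm{X}\mathcal{X}_{k-1}$ one gets $\|\mathcal{X}_k\|\le(2\|\bm{X}\|+\|\Delta\bm{X}\|)^{k-1}\|\Delta\bm{X}\|$. Summing gives the factor $\sum_{k\ge1}c^{k-1}/k!=(e^c-1)/c\le e-1<2$ for $c:=2\|\bm{X}\|_F+\|\Delta\bm{X}\|_F\le1$, which yields (i). The norms on $\mathcal{X}_k$ should be operator norms, with Frobenius norm used only on the single $\Delta\bm{X}$ factor; the final multiplication by $e^{\bm{X}}$ costs $\|e^{\bm{X}}\|_F$. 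This is consistent since $\|\bm{A}\bm{B}\|_F\le\|\bm{A}\|_2\|\bm{B}\|_F$.

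For (ii) I would reduce to (i) via the inverse function. First check that both logarithms are defined: $\|\bm{X}-\bm{I}\|_F\le\frac16$ and $\|\bm{X}+\Delta\bm{X}-\bm{I}\|_F\le\frac12<1$, so Lemma \ref{lemmaexplog}(ii) applies and gives $e^{{\rm log}(\cdot)}=\cdot$. The cleanest route avoids exponentials and bounds the series directly:
\begin{equation*}
{\rm log}(\bm{X}+\Delta\bm{X})-{\rm log}(\bm{X})=\sum_{k\ge1}\frac{(-1)^{k+1}}{k}\big[(\bm{A}+\Delta\bm{X})^k-\bm{A}^k\big],\qquad \bm{A}:=\bm{X}-\bm{I}.
\end{equation*}
Telescoping gives $\|(\bm{A}+\Delta\bm{X})^k-\bm{A}^k\|_F\le k\,r^{k-1}\|\Delta\bm{X}\|_F$ with $r:=\max\{\|\bm{A}\|,\|\bm{A}+\Delta\bm{X}\|\}\le\|\bm{A}\|_F+\|\Delta\bm{X}\|_F\le\frac12$. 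The sum is therefore bounded by $\|\Delta\bm{X}\|_F\sum_{k\ge1}r^{k-1}=\|\Delta\bm{X}\|_F/(1-r)\le2\|\Delta\bm{X}\|_F$, which is (ii). Here $\|\bm{A}\|_F+\|\Delta\bm{X}\|_F\le 3\|\bm{A}\|_F+\|\Delta\bm{X}\|_F\le\frac12$, so the hypothesis is more than enough.

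The main obstacle is (i): getting the constant $2$ rather than $e$ while keeping $\|e^{\bm{X}}\|_F$, and not $e^{\|\bm{X}\|}$, as the factor. The commutator series \eqref{eX} handles this, and its induction bound on $\|\mathcal{X}_k\|$ is the step to verify carefully, in particular mixing operator and Frobenius norms consistently.
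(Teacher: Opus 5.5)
Your proof is correct. For (i), your final argument is the paper's own: the paper also writes $e^{\bm{X}+\Delta\bm{X}}-e^{\bm{X}}=\big(\sum_{k\ge1}\frac{1}{k!}\mathcal{X}_k\big)e^{\bm{X}}$ via \eqref{eX} and bounds $\mathcal{X}_k$ by induction on the commutator recursion. It uses the cruder bound $\Vert\mathcal{X}_k\Vert_F\le\Vert\Delta\bm{X}\Vert_F$ together with $\sum_{k\ge1}1/k!=e-1\le2$, and works in the Frobenius norm throughout. Since that norm is submultiplicative, your concern about mixing operator and Frobenius norms is unnecessary, though harmless. The Duhamel route you discarded also works if you integrate rather than take the supremum over $s$: $\int_0^1 e^{(1-s)c}\,{\rm d}s=(e^c-1)/c\le e-1$, the same constant as your series bound.

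For (ii) you take a genuinely different route. The paper expands $(\bm{A}+\Delta\bm{X})^k=\sum_{l=0}^{k}\binom{k}{l}\mathcal{Y}_l\bm{A}^{k-l}$, where $\bm{A}=\bm{X}-\bm{I}$ and $\mathcal{Y}_l$ is a second commutator sequence. It bounds $\Vert\mathcal{Y}_l\Vert_F\le(2\Vert\bm{A}\Vert_F+\Vert\Delta\bm{X}\Vert_F)^{l-1}\Vert\Delta\bm{X}\Vert_F$. Collapsing the binomial sum then gives $\Vert(\bm{A}+\Delta\bm{X})^k-\bm{A}^k\Vert_F\le k(3\Vert\bm{A}\Vert_F+\Vert\Delta\bm{X}\Vert_F)^{k-1}\Vert\Delta\bm{X}\Vert_F$, which is exactly where the factor $3$ in the hypothesis comes from.

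Your telescoping identity $(\bm{A}+\Delta\bm{X})^k-\bm{A}^k=\sum_{j=0}^{k-1}(\bm{A}+\Delta\bm{X})^j\Delta\bm{X}\bm{A}^{k-1-j}$ gives $k r^{k-1}\Vert\Delta\bm{X}\Vert_F$ with $r\le\Vert\bm{A}\Vert_F+\Vert\Delta\bm{X}\Vert_F$ directly. It is shorter, needs no auxiliary recursion, and proves the conclusion under the weaker hypothesis $\Vert\bm{X}-\bm{I}\Vert_F+\Vert\Delta\bm{X}\Vert_F\le\frac12$. The paper's version mainly buys a formal parallel with (i). Finally, the announced reduction of (ii) to (i) and the identity $e^{{\rm log}(\cdot)}=\cdot$ play no role in your final argument. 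The only thing you actually need is that both logarithm series converge, which your norm check provides.
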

\begin{proof} To prove assertion (i), assume that $2\Vert \bm{X}\Vert_F+\Vert\Delta \bm{X}\Vert_F\le1$. Then, by
mathematical induction, it is easy to check
\begin{equation*}
\Vert\mathcal{X}_{k}\Vert_F\le\Vert\Delta \bm{X}\Vert_F\quad{\rm for \; each}\quad k\ge1.
\end{equation*}
Thus, it follows from \eqref{eX} that
$$\Vert e^{\bm{X}+\Delta\bm {X}}-e^{\bm{X}}\Vert_F=\left\Vert\left (\sum\limits_{k=1}^{\infty}\frac{1}{k!}\mathcal{X}_{k}\right) e^{\bm{X}}\right\Vert_F\le\sum\limits_{k=1}^{\infty}\frac{1}{k!}\Vert e^{\bm{X}}\Vert_F\Vert\Delta\bm {X}\Vert_F\le2\Vert e^{\bm X}\Vert_F\Vert\Delta\bm {X}\Vert_F.$$ This shows  implication (i).

  For the proof of  implication (ii), we  notice  that
\begin{equation}\label{impii}
 {\rm log}({\bm X}+\Delta {\bm X})-{\rm log}({\bm X})=\sum_{k=1}^{\infty}\frac{(-1)^{k+1}}{k}[(\bm{X}+\Delta {\bm X}-\bm{I})^k-(\bm{X}-\bm{I})^k].
 \end{equation}
  Moreover, we set $\mathcal{Y}_{0}:=\bm{I} $, $\mathcal{Y}_{1}:=\Delta {\bm X} $ and $\mathcal{Y}_{l}:=(\bm{X}-\bm{I})\mathcal{{\bm Y}}_{l-1}-\mathcal{\bm Y}_{l-1}(\bm{X}-\bm{I})+\Delta {\bm X}\mathcal{Y}_{l-1}$ ($l\ge2$).
Obviously, for each $ l\ge1$, it holds that
 \begin{equation}\label{ym} \Vert\mathcal{Y}_{l}\Vert_F\le (2\Vert\bm{X}-\bm{I}\Vert_F+\Vert\Delta {\bm X}\Vert_F)\Vert\mathcal{Y}_{l-1}\Vert_F\leq\cdots\leq(2\Vert\bm{X}-\bm{I}\Vert_F+\Vert\Delta {\bm X}\Vert_F)^{l-1}\Vert\Delta {\bm X}\Vert_F.\end{equation}
Fix $k\in\mathbb{N}$ and then,  by \cite[formulae (3)]{Tetsuji}, we have
$$(\bm{X}+\Delta {\bm X}-\bm{I})^k=\sum_{l=0}^{k}\frac{k!}{l!(k-l)!}\mathcal{Y}_l(\bm{X}-\bm{I})^{k-l},$$
which, together with \eqref{ym},  gives
\begin{align}\Vert(\bm{X}+\Delta {\bm X}-\bm{I})^k-(\bm{X}-\bm{I})^k\Vert_F&=\left\Vert\sum\limits_{l=1}^{k}\frac{k!}{l!(k-l)!}\mathcal{Y}_l(\bm{X}-\bm{I})^{k-l}\right\Vert_F\notag\\
&\le\sum\limits_{l=1}^{k}\frac{k!}{l!(k-l)!}(2\Vert\bm{X}-\bm{I}\Vert_F+\Vert\Delta {\bm X}\Vert_F)^{l-1}\Vert\bm{X}-\bm{I}\Vert_F^{k-l}\Vert\Delta {\bm X}\Vert_F.\notag
\end{align}
It follows that
\begin{align*}\Vert(\bm{X}+\Delta {\bm X}-\bm{I})^k-(\bm{X}-\bm{I})^k\Vert_F
\le k(3\Vert\bm{X}-\bm{I}\Vert_F+\Vert\Delta {\bm X}\Vert_F)^{k-1}\Vert\Delta {\bm X}\Vert_F.
\end{align*}
Combining this with \eqref{impii} and noting the fact that $3\Vert \bm{X}-{\bm I}\Vert_F+\Vert\Delta \bm{X}\Vert_F\le\frac12$, we have
$$\Vert {\rm log}({\bm X}+\Delta {\bm X})-{\rm log}({\bm X})\Vert_F\le\sum_{k=1}^{\infty}(3\Vert\bm{X}-\bm{I}\Vert_F+\Vert\Delta {\bm X}\Vert_F)^{k-1}\Vert\Delta {\bm X}\Vert_F\le2\Vert\Delta {\bm X}\Vert_F.$$
Hence we complete the proof.
\end{proof}

Recall that $\C(\cdot)$ is defined by \eqref{mapC}. Then we have the following lemma.
\begin{lemma}\label{fact3}Let $\bm{X}\in\mathcal{S}(n)$. Then $\mathcal{C}(\bm{X})\in\mathcal{O}(n)$ and, if $\|\bm{X}\|_F\le1$, we have
$$\|\mathcal{C}(\bm{X})-e^{\bm{X}}\|_F\le\|\bm{X}\|_F^3,\quad\frac23\|\bm{X}\|_F\le\|\mathcal{C}(\bm{X})-\bm{I}\|_F\le2\|\bm{X}\|_F.$$
\end{lemma}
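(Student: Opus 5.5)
Write $\mathcal{C}(\bm{X})=(\bm{I}+\frac12\bm{X})(\bm{I}-\frac12\bm{X})^{-1}$ for $\bm{X}\in\mathcal{S}(n)$. The whole argument runs through the spectral picture. A real skew-symmetric matrix is normal and has purely imaginary eigenvalues $\pm i\theta_j$. So $\bm{X}=\bm{Q}\bm{\Lambda}\bm{Q}^*$ with $\bm{Q}$ unitary and $\bm{\Lambda}$ diagonal with entries $i\theta_j$. Here $\sum_j|\theta_j|^2=\|\bm{X}\|_F^2$, so $|\theta_j|\le\|\bm{X}\|_F\le 1$ in the inequalities.

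\textbf{Orthogonality.} The matrix $\bm{I}-\frac12\bm{X}$ has eigenvalues $1-\frac i2\theta_j\neq0$, so it is invertible. The two factors $\bm{I}\pm\frac12\bm{X}$ are polynomials in $\bm{X}$, hence they commute. Using $\bm{X}^\top=-\bm{X}$ we get $\mathcal{C}(\bm{X})^\top=(\bm{I}+\frac12\bm{X})^{-1}(\bm{I}-\frac12\bm{X})$. Multiplying by $\mathcal{C}(\bm{X})$ and using commutativity gives $\mathcal{C}(\bm{X})^\top\mathcal{C}(\bm{X})=\bm{I}$.

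\textbf{Reduction to scalars.} The matrices $\mathcal{C}(\bm{X})$, $e^{\bm{X}}$ and $\bm{I}$ are all functions of $\bm{X}$, so they are diagonalized by the same $\bm{Q}$. Since the Frobenius norm is unitarily invariant,
$$\|g(\bm{X})\|_F^2=\sum_j|g(i\theta_j)|^2$$
for each $g\in\{c-\exp,\; c-1\}$, where $c(z)=\frac{1+z/2}{1-z/2}$. It therefore suffices to prove, for real $\theta$ with $|\theta|\le1$, the two scalar bounds
$$|c(i\theta)-e^{i\theta}|\le|\theta|^3,\qquad \tfrac23|\theta|\le|c(i\theta)-1|\le2|\theta|.$$
Squaring and summing then gives the matrix bounds. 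For the cubic one we use $\sum_j|\theta_j|^6\le(\sum_j\theta_j^2)^3=\|\bm{X}\|_F^6$.

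\textbf{The linear bounds.} These are explicit: $c(i\theta)-1=\frac{i\theta}{1-i\theta/2}$, so $|c(i\theta)-1|=\frac{|\theta|}{\sqrt{1+\theta^2/4}}$. For $|\theta|\le1$ this lies in $[\frac{2}{\sqrt5}|\theta|,|\theta|]$, and $\frac{2}{\sqrt5}>\frac23$.

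\textbf{The cubic bound.} I expect this to be the only step needing care. Both $c(i\theta)$ and $e^{i\theta}$ lie on the unit circle. Write $c(i\theta)=e^{i\phi}$ with $\phi=2\arctan(\theta/2)$. Then
$$|c(i\theta)-e^{i\theta}|=2\left|\sin\tfrac{\theta-\phi}{2}\right|\le|\theta-\phi|=|\theta-2\arctan(\theta/2)|.$$
The alternating Taylor series $\arctan u=u-\frac{u^3}{3}+\cdots$ with $u=\theta/2$, $|u|\le\frac12$, gives $0\le u-\arctan u\le u^3/3$. Hence $|\theta-\phi|\le 2\cdot\frac{|\theta|^3}{24}=\frac{|\theta|^3}{12}\le|\theta|^3$, which completes the proof.
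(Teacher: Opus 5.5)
Your proof is correct, and it takes a different route from the paper. The paper proves only the lower bound $\frac23\|\bm{X}\|_F\le\|\mathcal{C}(\bm{X})-\bm{I}\|_F$ itself. It cites \cite[Lemma 2.4]{Wang} for orthogonality, the cubic estimate and the upper bound. Its argument for the lower bound is the algebraic identity $(\mathcal{C}(\bm{X})-\bm{I})(\bm{I}-\frac12\bm{X})=\bm{X}$, which gives $\|\bm{X}\|_F\le\|\mathcal{C}(\bm{X})-\bm{I}\|_F\,\|\bm{I}-\frac12\bm{X}\|$, combined with $\|\bm{I}-\frac12\bm{X}\|\le\frac32$. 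For this step the norm on $\bm{I}-\frac12\bm{X}$ has to be the spectral norm, via $\|\bm{A}\bm{B}\|_F\le\|\bm{A}\|_F\|\bm{B}\|_2$. The Frobenius norm written in the paper equals $\sqrt{n+\|\bm{X}\|_F^2/4}$ for skew-symmetric $\bm{X}$, which exceeds $\frac32$ when $n\ge3$.

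Your unitary diagonalization instead reduces all four claims to scalar facts about $\frac{1+i\theta/2}{1-i\theta/2}=e^{2i\arctan(\theta/2)}$. This makes the lemma self-contained and gives sharper constants: $\|\mathcal{C}(\bm{X})-e^{\bm{X}}\|_F\le\frac1{12}\|\bm{X}\|_F^3$, matching the leading Taylor coefficient $\frac14-\frac16$, and $\frac{2}{\sqrt5}\|\bm{X}\|_F\le\|\mathcal{C}(\bm{X})-\bm{I}\|_F\le\|\bm{X}\|_F$. The upper bounds even hold without the assumption $\|\bm{X}\|_F\le1$. The paper's identity, by contrast, uses no spectral theory or normality, and is the quickest path if only the lower bound is needed.

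One small point in your cubic step: $0\le u-\arctan u\le u^3/3$ holds only for $u\ge0$. For $u<0$, use oddness, or write $u-\arctan u=\int_0^u\frac{t^2}{1+t^2}\,dt$ to get $|u-\arctan u|\le|u|^3/3$ for all real $u$.
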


\begin{proof}We only need to prove that \begin{equation}\label{cx-I}\|\mathcal{C}(\bm{X})-\bm{I}\|_F\ge\frac23\|\bm{X}\|_F\end{equation} as the other conclusions are
 known in \cite[Lemma 2.4]{Wang}. In fact, by \eqref{mapC}, one checks that $(\mathcal{C}(\bm{X})-\bm{I})(\bm{I}-\frac12\bm{X})=\bm{X}$, which gives
$$\Vert\mathcal{C}(\bm{X})-\bm{I}\Vert_F\ge\frac{\Vert\bm{X}\Vert_F}{\Vert\bm{I}-\frac12\bm{X}\Vert_F}.$$
Then  \eqref{cx-I} is seen to hold as $\Vert\bm{X}\Vert_F\le1$ and so, the proof is complete.
\end{proof}

We end the appendix with the following lemma which demonstrates the relationship between the diagonal elements and 2-norm of a matrix.
\begin{lemma}\label{diag2norm}Let $\bm{X}:=(x_{ij})\in\mathbb{R}^{n\times n}$, then
$$|x_{ii}|\le\|\bm{ X}\|_2\quad{\rm for\; each}\quad 1\le i\le n.$$
\end{lemma}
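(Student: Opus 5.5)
The plan is to compare a single diagonal entry with the spectral norm by testing $\bm{X}$ against a pair of unit vectors. Fix $1\le i\le n$ and let $\bm{e}_i\in\mathbb{R}^n$ be the $i$-th standard basis vector. Then
\begin{equation*}
x_{ii}=\bm{e}_i^\top\bm{X}\bm{e}_i .
\end{equation*}
Applying the Cauchy--Schwarz inequality and then the definition of the operator norm gives
\begin{equation*}
|x_{ii}|=|\langle \bm{e}_i,\bm{X}\bm{e}_i\rangle|\le\|\bm{e}_i\|\,\|\bm{X}\bm{e}_i\|\le\|\bm{X}\|_2\,\|\bm{e}_i\|^2=\|\bm{X}\|_2 ,
\end{equation*}
where the last equality uses $\|\bm{e}_i\|=1$. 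Since $i$ is arbitrary, this proves the claim.

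The same argument covers the rectangular blocks that arise in the application in Lemma \ref{lemma-supp}, namely the $(m-r)\times(n-r)$ block $\bar{\bm U}^{(2)}\nabla f(\bar{\bm X})(\bar{\bm V}^{(2)})^\top$. One simply takes $\bm{e}_i$ of the appropriate dimension on each side. In particular, the diagonal entries of this block with index $i>r$ are the components $[\nabla_{\bm\sigma}f_{\bar{\bm\Omega}}(\bar{\bm\sigma},\bm 0,\bm 0)]_i$. Hence they are bounded by the spectral norm of that block, which is exactly what \eqref{fomegari} uses.

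No step here is a real obstacle. The only point needing care is to note that the diagonal entries of the sub-block coincide with the corresponding diagonal entries of the full matrix $\bar{\bm U}\nabla f(\bar{\bm X})\bar{\bm V}^\top$.
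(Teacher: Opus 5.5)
Your proof is correct and is essentially the paper's argument: the paper bounds $|x_{ii}|$ by the Euclidean norm of the $i$-th column, $\|\bm{X}e_i\|$, and then by $\|\bm{X}\|_2$, which is your chain with the Cauchy--Schwarz step written out. Your remark that the same argument covers the rectangular $(m-r)\times(n-r)$ block used in Lemma \ref{lemma-supp} is also correct, and it closes the small mismatch between the square-matrix statement and how the lemma is actually applied.
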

\begin{proof}Let $1\le i\le n$ and   $e_i$ be the $i$th column of the identity matrix $\bm{I}$. Then, we have
 $$|x_{ii}|\le\sqrt{\sum\limits_{l=1}^nx_{li}^2}=\|\bm{X}e_i\|\le\|\bm{X}\|_2,$$
 where the last inequality holds because of the definition of the 2-norm for matrices.
 \end{proof}

\end{appendices}

\addcontentsline{toc}{chapter}{Bibliography}
%
\nocite{}
\bibliography{references}
\bibliographystyle{abbrv}

\end{document}